 \newtheorem{thm}{Theorem}[section]
 \newtheorem{lem}[thm]{Lemma}
 \newtheorem*{nonumtheorem}{Theorem}
 \theoremstyle{definition}
 \newtheorem{defn}[thm]{Definition}
 \theoremstyle{remark}
 \newtheorem{rem}[thm]{Remark}
 \newtheorem*{ex}{Example}
\numberwithin{equation}{section}
\newcommand{\ccomma}{\mathpunct{\raisebox{0.5ex}{,}}}
\begin{document}

%------------------------------------------------------------------------- % editorial commands: to be inserted by the editorial office % %\firstpage{1} \volume{228} \Copyrightyear{2004} \DOI{003-0001} % % %\seriesextra{Just an add-on} %\seriesextraline{This is the Concrete Title of this Book\br H.E. R and S.T.C. W, Eds.} % % for journals: % %\firstpage{1} %\issuenumber{1} %\Volumeandyear{1 (2004)} %\Copyrightyear{2004} %\DOI{003-xxxx-y} %\Signet %\commby{inhouse} %\submitted{March 14, 2003} %\received{March 16, 2000} %\revised{June 1, 2000} %\accepted{July 22, 2000} % % % %---------------------------------------------------------------------------

\title[Self-adjoint boundary conditions]
 {Self-adjoint boundary conditions \\ for
the prolate spheroid differential operator.}

\author[Victor Katsnelson]{Victor Katsnelson}

\address{%
Department of Mathematics, Weizmann Institute, Rehovot, 7610001, Israel}

\email{victor.katsnelson@weizmann.ac.il; victorkatsnelson@gmail.com}

%\thanks{This work was completed with the support of our %\TeX-pert.}

\subjclass{Primary 47E05, 34E05; \\ Secondary 33E10}

\keywords{Truncated
Fourier operator, prolate spheroid differential operator, Legendre differential operator, self-adjoint extensions of  singular differential operators, abstract boundary conditions, commuting operators.}

\date{January 1, 2016}
\dedicatory{Dedicated to Heinz Langer in the occasion of his 80th birthday}
%%% ----------------------------------------------------------------------

\begin{abstract}
We consider the formal prolate spheroid differential operator on a finite symmetric interval and describe all its self-adjoint boundary conditions. Only one of these boundary conditions corresponds to a self-adjoint differential operator which commute with the Fourier operator truncated on the considered finite symmetric interval.
\end{abstract}

\maketitle
Hereinafter, \(\langle\,.\,,\,.\,\rangle\) means the standard scalar product in the Hilbert space \(L^2([-a,a])\):
\begin{equation*}
\langle u,v\rangle=\int\limits_{[-a,a]}u(t)\overline{v(t)}\,dt, \ \ \forall\,u,v\in L^2([-a,a]).
\end{equation*}
%%%%%%%%%%%%%%%%%%%%%%%%%%%%%%%%%%%%%%%%%%%%%%%%%%%%%%%%%%%%%%%%%%%%%%%
\section{Introduction.}
The study of the spectral theory of the Fourier operator
restricted on a finite symmetric interval \([-a,a]\):
\begin{multline}
\label{TrFSI}%
 \hspace{4.0ex}
(\mathscr{F}_{E}x)(t)=\frac{1}{\sqrt{2\pi}}\int\limits_{-a}^{a}e^{it\xi}x(\xi)\,d\xi,\quad
\ \ t\in{}E,\ \ E=[-a,a],\\
\mathscr{F}_{E}:\,\,L^2(E)\to{}L^2(E)\,,\hspace{4.0ex}
\end{multline}
is closely related to study of the differential operator generated
by the differential expression (or \emph{formal differential operator}) \(L\):
\begin{equation}
\label{FDOFI}%
(Lx)(t)=
-\frac{d\,\,}{dt}\bigg(\bigg(1-\frac{t^2}{a^2}\bigg)\frac{dx(t)}{dt}\bigg)+t^2x(t)\,.
\end{equation}
The relationship between the spectral theory of the integral operator
\(\mathscr{F}_{E}^{\ast}\mathscr{F}_{E}\), \(E=[-a,a]\), and a
differential operator generated by the differential expression \(L\)
was discovered in the series of remarkable papers \cite{SlPo},
\cite{LaP1}, \cite{LaP2}, where this relationship has been
ingeniously used
 for developing the spectral theory of the operator
\(\mathscr{F}_{E}^{\ast}\mathscr{F}_{E}\). (See also \cite{Sl2},
\cite{Sl3}.) Actually the reasoning of  \cite{SlPo}, \cite{LaP1},
\cite{LaP2} can be easily applied to the spectral theory of the
operator \(\mathscr{F}_{E}\) itself.

It is a certain system of
eigenfunctions related to the differential expression \(L\) which was used in \cite{SlPo},
\cite{LaP1}, \cite{LaP2}. These eigenfunctions are known as \emph{prolate spheroidal
wave functions}. The prolate spheroidal wave functions themselves were used
much before the series of the papers \cite{SlPo}, \cite{LaP1},
\cite{LaP2} was published. These functions naturally appear by
separation of variables for the Laplace equation in spheroidal
coordinates. However it was the works \cite{SlPo}, \cite{LaP1},
\cite{LaP2} where the prolate functions were first used for
solving the spectral problem related to the Fourier analysis on
a finite symmetric interval. Until now, there is no clear
understanding why the approach used in \cite{SlPo}, \cite{LaP1},
\cite{LaP2} works. This is a \emph{lucky accident} which still waits
for its explanation. (See \cite{Sl3}.)

Actually eigenfunctions are related not to the the differential
expression itself but to a certain differential operator. This differential operator is
generated not only by the differential expression but also by
certain boundary conditions. In the case \(E=(-\infty,\infty)\),
the differential operator generated by the differential expression
\(-\frac{d^2\,}{dt^2}+t^2\) on the class smooth finite functions
(or the class of smooth fast decaying functions) is essentially
self-adjoint: the closure of this operator is a self-adjoint
operator. Thus in the case \(E=(-\infty,\infty)\) there is no need
to discuss the boundary condition.

In contrast to the case \(E=(-\infty,\infty)\), in the case
\(E=[-a,a],\,\,0<a<\infty\), the minimal differential operator related to the formal differential operator
\(\displaystyle-\frac{d\,\,}{dt}\Big(1-\frac{t^2}{a^2}
\frac{d\,\,}{dt}\bigg)+t^2\)
is symmetric but is \emph{not} self-adjoint. This minimal operator
admits the family of self-adjoint extensions. Each of this
self-adjoint extensions is described by a certain boundary
conditions at the end points of the interval \([-a,a]\). The set
of all such extensions can be parameterized by the set of all
\(2\times2\) unitary matrices.

 It turns out that only one of these extensions
commutes with the truncated Fourier operator
\(\mathscr{F}_{E},\,\,E=[-a,a]\). To our best knowledge, until now
no attention was paid to this aspect. In the present paper, we  investigate the question which extensions of the
minimal differential operator generated by
\(L\), \eqref{FDOFI}, commute with \(\mathscr{F}_{E}\), \(E=[-a,a]\).

The formal operator \(L\) is of the form
\begin{subequations}
\label{LDO}
\begin{gather}
\label{LDO1}
L=M+Q,\\
\intertext{where}
\label{LDO2}
(Mx)(t)=-\frac{d\,\,}{dt}\bigg(\bigg(1-\frac{t^2}{a^2}\bigg)\frac{dx(t)}{dt}\bigg),\\[1.0ex]
\label{LDO3}
(Qx)(t)=t^2x(t).
\end{gather}
\end{subequations}
The operator \(L\) is said to be \emph{the prolate spheroid differential operator}.\\
The operator \(M\) is said to be \emph{the Legendre differential operator}.\\

The operator \(Q\) is a bounded self-adjoint operator in \(L^2([-a,a])\). Therefore the operators \(L\) and \(M\) are "equivalent" from the viewpoint of
the extension theory:
if one of these operators is self-adjoint on some domain of definition \(\mathcal{D}\),
then the other is self-adjoint on \(\mathcal{D}\) as well.

\section{Analysis of solutions of the equation
\(\boldsymbol{Mx=\lambda{}x}\) near singular points.}
For the differential equation
\begin{equation}
\label{DEFEWP}
-\frac{d\,\,}{dt}\bigg(\Big(1-\frac{t^2}{a^2}\Big)\frac{dx(t)}{dt}\bigg)
=\lambda{}x(t),\quad
 t\in\mathbb{C},
\end{equation}
considered in complex plane, the points \(-a\) and \(a\) are the
regular singular point. Let us investigate the asymptotic behavior
of solutions of the equation \eqref{DEFEWP} near these points. (Actually we need
to know this behavior only for real \(t\in(-a,a)\) only, but it is
much easier to investigate this question using some knowledge from
the analytic theory of differential equation.) Concerning the
analytic theory of differential equation see \cite[Chapter 5]{Sm}.

Let us outline an analysis of solution of the equation near the
point \(t=-a\). Change of variable
\[t=-a+s,\ \ \  x(-a+s)=y(s)\]
reduces the equation \eqref{DEFEWP} to the form
\begin{equation}
\label{ChVar}%
 s\frac{d^2y(s)}{ds^2}+f(s)\frac{dy(s)}{ds}+g(s,\lambda)y(s)=0\,,
\end{equation}
where \(f(s)\) and \(g(s)\) are functions holomorphic within the
disc \(|s|<2a\), moreover \(f(0)=1\):
\begin{equation}
\label{psex}%
f(s)=1+\sum\limits_{k=1}^{\infty}f_ks^k,\quad
g(s,\lambda)=\sum\limits_{k=0}^{\infty}g_k(\lambda)s^k\,.
\end{equation}
An explicit calculation with power series gives:
\begin{equation}
\label{ExEx1} f_1=-\frac{1}{2a};\ \ \
g_0=\frac{\lambda{}a}{2},\ \ g_1=
\frac{\lambda}{4}\,.
\end{equation}
 Now we turn to the analytic theory of differential equations.
 The results of this theory which we need are
 presented for example in \cite[Chapter 5]{Sm}, see especially
section \textbf{98} there. We seek the solution of the equation
\eqref{ChVar}-\eqref{psex} in the form
\[y(s)=s^{\rho}\sum\limits_{k=0}^{\infty}c_ks^k\,.\]
Substituting this expression to the left-hand side of the equation
\eqref{ChVar}-\eqref{psex} and equating the coefficients,  we obtain the equations for the
determination of \(\rho\) and \(c_k\). In particular, the equation
corresponding to the power \(s^{\rho-1}\) is of the form:
\[c_0\,\rho^2=0\,.\]
The coefficient \(c_0\) plays the role of a normalizing  constant,
and we may take
\begin{equation}
\label{NoCo}%
 c_0=1\,.
\end{equation}
Equation for \(\rho\), the so called \emph{characteristic
equation}, is of the form
\begin{equation}
\label{ChaEq}%
 \rho^2=0.
\end{equation}
This equation has the root \(\rho=0\) and this root is of multiplicity two.
According to general theory, the equation
\eqref{ChVar}-\eqref{psex} has two solutions \(y_1(s)\) and
\(y_2(s)\) possessing the properties:

The solution \(y_1(s)\) is a function holomorphic is the disc
\(|s|<2a\) satisfying the normalizing condition \(y_1(0)=1\). The
solution \(y_2(s)\) is of the form \(y_2(s)=y_1(s)\,\ln{}s+z(s)\),
where \(z(s)\) is a function holomorphic  in the disc \(|s|<2a\)
and satisfying the condition \(z(0)=0\). Returning to the variable
\(t=-a+s\), we get the following result:
\begin{lem}%
\label{ABSNS}%
Let \(M\) be the differential expression defined by \eqref{LDO2},
and \(\lambda\in\mathbb{C}\) be arbitrary fixed.
\begin{enumerate}
\item
There exist two solutions \(x_{1}^{-}(t,\lambda)\) and
\(x_{2}^{-}(t,\lambda)\) of
the equation %
 \begin{math}%
Mx(t)=\lambda{}x(t)
\end{math} %
possessing the properties:
\begin{enumerate}
\item
The
function \(x_{1}^{-}(t,\lambda)\) is holomorphic in the disc
\(|t+a|<2a\), and satisfies the normalizing condition
\(x_{1}^{-}(-a,\lambda)=1\)\,;
\item
 The function
\(x_{2}^{-}(t,\lambda)\) is of the form %
\[x_{2}^{-}(t,\lambda)=x_{1}^{-}(t,\lambda)\,\ln{}(t+a)+w^{-}(t,\lambda),\]
where the function \(w^{-}(t,\lambda)\) is holomorphic in the disc
\(|t+a|<2a\) and satisfies the condition \(w^{-}(-a,\lambda)=0\)\,.
\end{enumerate}
\item
There exist two solutions \(x_{1}^{+}(t,\lambda)\) and
\(x_{2}^{+}(t,\lambda)\) of the equation
\begin{math}%
Mx(t)=\lambda{}x(t)
\end{math}
possessing the properties:
\begin{enumerate}
\item
 The
function \(x_{1}^{+}(t,\lambda)\) is holomorphic in the disc
\(|t-a|<2a\), and satisfy the normalizing condition
\(x_{1}^{+}(a,\lambda)=1\)\,;
\item
 The function
\(x_{2}^{+}(t,\lambda)\) is of the form %
\[x_{2}^{+}(t,\lambda)=x_{1}^{+}(t,\lambda)\,\ln{}(a-t)+w^{+}(t,\lambda),\]
where the function \(w^{+}(t,\lambda)\) is holomorphic in the disc
\(|t+a|<2a\) and satisfy the condition \(w^{+}(a,\lambda)=0\)\,.
\end{enumerate}
\end{enumerate}
\end{lem}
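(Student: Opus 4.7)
The plan is to carry through the reduction already sketched and then invoke the Frobenius-type existence theorem from \cite[\S 98]{Sm} for a regular singular point whose characteristic equation has a double root. First I would perform the substitution $t = -a + s$ on $Mx = \lambda x$. A short calculation gives $1 - t^2/a^2 = s(2a-s)/a^2$, and dividing the transformed equation by the factor $(2a-s)/a^2$ puts it in the form \eqref{ChVar} with $f(s) = (2a-2s)/(2a-s)$ and $g(s,\lambda) = \lambda a^2/(2a-s)$. Since these rational functions have their only pole at $s = 2a$, both are holomorphic on $|s| < 2a$; the normalization $f(0) = 1$ and the coefficients listed in \eqref{ExEx1} are then immediate.

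Next I would appeal to the theorem on regular singular points with coincident characteristic roots. It supplies a holomorphic solution $y_1(s)$ with $y_1(0) = 1$ and a linearly independent solution of the form $y_1(s)\log s + z(s)$ with $z$ holomorphic and $z(0) = 0$, both series convergent throughout the common disc of holomorphy $|s| < 2a$. Setting $x_1^-(t,\lambda) := y_1(t+a)$ and $w^-(t,\lambda) := z(t+a)$, and defining $x_2^-(t,\lambda) := x_1^-(t,\lambda)\ln(t+a) + w^-(t,\lambda)$, then yields part (1).

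For part (2), rather than redo the analysis at $t = a$, I would exploit the reflection symmetry $t \mapsto -t$. Because the coefficient $(1 - t^2/a^2)$ is even, the expression $M$ commutes with this reflection, so $x_1^+(t,\lambda) := x_1^-(-t,\lambda)$ and $x_2^+(t,\lambda) := x_2^-(-t,\lambda)$ again solve $Mx = \lambda x$ and inherit all the required properties; in particular $\ln(t+a)$ is transformed into $\ln(a-t)$, and the normalizations at $-a$ become the normalizations at $+a$.

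I expect no serious obstacle: the only substantive input is the Frobenius theorem in the coincident-roots case, a standard black-box from the analytic theory of ODEs. The point meriting the most care is the disc of holomorphy of $w^{\pm}$: its radius being exactly $2a$ rests on the global holomorphy of $f$ and $g$ throughout $|s| < 2a$, which in turn reflects the fact that the only other singular point of $M$ lies at distance $2a$ from the first.
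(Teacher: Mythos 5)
Your proposal is correct and follows essentially the same route as the paper: the substitution $t=-a+s$, reduction to the form \eqref{ChVar} with $f,g$ holomorphic in $|s|<2a$ (your explicit $f(s)=(2a-2s)/(2a-s)$ and $g(s,\lambda)=\lambda a^2/(2a-s)$ indeed reproduce \eqref{ExEx1}), and the Frobenius theorem for a regular singular point with the double characteristic root $\rho=0$. Your use of the reflection $t\mapsto -t$ to obtain part (2) is a clean shortcut for what the paper leaves as the symmetric repetition of the argument at $t=+a$, and it incidentally shows that the disc in part 2(b) of the statement should read $|t-a|<2a$.
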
%

For a fixed \(\lambda\), the solutions \(x_{1}^{-}(t,\lambda)\).
\(x_{2}^{-}(t,\lambda)\) are linearly independent. Therefore
arbitrary solution \(x(t,\lambda)\) of the equation \eqref{DEFEWP}
can be expanded into a linear combination
\begin{subequations}
\label{LCom}
\begin{equation}
\label{LComM}%
x(t,\lambda)=c_1^{-}x_{1}^{-}(t,\lambda)+c_2^{-}x_{2}^{-}(t,\lambda).
\end{equation}
 The solutions \(x_{1}^{+}(t,\lambda)\),
\(x_{2}^{+}(t,\lambda)\) also are linearly independent, and the
solution \(x(t,\lambda)\) can be also expanded into the other
linear combination
\begin{equation}
\label{LComP}%
x(t,\lambda)=c_1^{+}x_{1}^{+}(t,\lambda)+c_2^{+}x_{2}^{+}(t,\lambda).
\end{equation}
\end{subequations}
Here \(c_1^{\pm}\), \(c_2^{\pm}\) are constants (with respect to
\(t\)). The solution \(x_{1}^{-}(t,\lambda)\) is bounded  and the
solution \(x_{2}^{-}(t,\lambda)\) grows logarithmically as
\(t\to{}-a\). Therefore the solution \(x(t,\lambda)\) is square
integrable near the point \(t=-a\). For the same reason, the the
solution \(x(t,\lambda)\) is square integrable near the point
\(t=a\). Thus we prove the following result.
\begin{lem}\label{InArS}
Given \(\lambda\in\mathbb{C}\), then every solution
\(x(t,\lambda)\) of the equation \eqref{DEFEWP} is square integrable:
\begin{equation}
\label{SqInt}%
 \int\limits_{-a}^{a}\big|x(t,\lambda)\big|^2\,dt<\infty\,.
\end{equation}
\end{lem}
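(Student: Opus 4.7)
The plan is to localize near each of the two singular endpoints and use Lemma \ref{ABSNS} to control the solution there, then handle the interior by an elementary continuity argument. Fix an arbitrary solution $x(t,\lambda)$ of $Mx=\lambda x$. Since the singular points of the equation are exactly $t=\pm a$, on any closed subinterval $[-a+\varepsilon,a-\varepsilon]\subset(-a,a)$ the solution is continuous, hence bounded, so the integral $\int_{-a+\varepsilon}^{a-\varepsilon}|x(t,\lambda)|^2\,dt$ is finite. It remains to control the integral on the two remaining small intervals $[-a,-a+\varepsilon]$ and $[a-\varepsilon,a]$.

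Near $t=-a$, by Lemma \ref{ABSNS}(1) the pair $x_1^{-}(t,\lambda),\,x_2^{-}(t,\lambda)$ is linearly independent (one is holomorphic in $|t+a|<2a$ with value $1$ at $-a$, while the other carries a $\log(t+a)$ term), and therefore forms a fundamental system. Hence there exist constants $c_1^{-},c_2^{-}$ with
\begin{equation*}
x(t,\lambda)=c_1^{-}x_1^{-}(t,\lambda)+c_2^{-}x_2^{-}(t,\lambda).
\end{equation*}
The first term is bounded near $-a$, while the second satisfies $|x_2^{-}(t,\lambda)|\le C(1+|\log(t+a)|)$ for $t$ close to $-a$, since $x_1^{-}$ and $w^{-}$ are holomorphic and thus bounded on a neighborhood of $-a$. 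Because $\int_0^{\varepsilon}(\log s)^2\,ds<\infty$, we conclude $\int_{-a}^{-a+\varepsilon}|x(t,\lambda)|^2\,dt<\infty$.

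The argument at the right endpoint is identical, now using part (2) of Lemma \ref{ABSNS}: expand $x(t,\lambda)=c_1^{+}x_1^{+}(t,\lambda)+c_2^{+}x_2^{+}(t,\lambda)$, bound $|x_2^{+}|$ by a constant times $1+|\log(a-t)|$, and invoke integrability of $(\log(a-t))^2$ near $a$. Adding the three contributions yields \eqref{SqInt}. There is no genuine obstacle here; the only substantive point is the local square-integrability of the logarithm, everything else being a consequence of Lemma \ref{ABSNS} and the fact that two linearly independent solutions span the solution space of the second-order equation \eqref{DEFEWP}.
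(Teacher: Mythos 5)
Your proof is correct and follows essentially the same route as the paper: expand the solution near each endpoint in the fundamental system from Lemma \ref{ABSNS}, observe that \(x_1^{\pm}\) is bounded and \(x_2^{\pm}\) grows only logarithmically, and use the local square-integrability of the logarithm. You merely make explicit the interior-compactness step and the bound \(\int_0^{\varepsilon}(\log s)^2\,ds<\infty\) that the paper leaves implicit.
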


 \section{Maximal and minimal differential operators generated by the differential
expression \(\boldsymbol{M}\).}

Various differential operators may be related to the differential expression
\begin{equation}
\label{DiffExp}%
M=-\frac{d\,\,}{dt}\bigg(1-\frac{t^2}{a^2}\bigg)\frac{d\,\,}{dt}\,.
\end{equation}
 Such operators are determined by
boundary conditions which are posed on functions from their domains of
definition.

\begin{defn}
\label{DoDeMaDO}%
 The set \(\mathcal{A}\) is the set of complex-valued
functions \(x(t)\) defined on the open interval \((-a,a)\) and
satisfying the
following conditions:
\begin{enumerate}
\item The derivative
\(\dfrac{dx(t)}{dt}\) of the function \(x(t)\) exists at every
point \(t\) of the interval \((-a,a)\);
\item
The function \(\dfrac{dx(t)}{dt}\) is absolutely continuous on
every compact subinterval of the interval \((-a,a)\);
\end{enumerate}
\end{defn}
\begin{defn}
\label{MaxDO}
The differential operator \(\mathcal{M}_{\textup{max}}\) is defined as follows:
\begin{subequations}
\label{maxdo}
\begin{enumerate}
\item
The domain of definition \(\mathcal{D}_{\mathcal{M}_{\textup{max}}}\)%
of the operator \(\mathcal{M}_{\textup{max}}\)
is:
\begin{equation}%
\label{maxdo1}
\mathcal{D}_{\mathcal{M}_{\textup{max}}}=\lbrace{}x:\,x(t)\in%
L^2((-a,a))\cap{\mathcal{A}}\ \  \textup{and} \ \
 (Mx)(t)\in{}L^2((-a,a))\rbrace,
 \end{equation}%
 where \((Mx)(t)\) is defined\,\footnotemark%
by \eqref{LDO2}.
\item
The action of the operator  \(\mathcal{M}_{\textup{max}}\) is:
\begin{equation}
\label{maxdo2}
\mathcal{M}_{{}_\textup{max}}x=Mx\,,\quad \forall\,x\in\mathcal{D}_{\mathcal{M}_{\textup{max}}}
\end{equation}
\end{enumerate}
\end{subequations}
\footnotetext{Since \(x\in\mathcal{A}\), the expression \((Mx)(t)\) is well defined.}%
The operator \(\mathcal{M}_{\textup{max}}\) is said to be the \emph{maximal differential
operator generated by the differential expression} \(M\).
\end{defn}
\begin{defn}
\label{DoDeMiDO}%
The set \(\mathring{\mathcal{A}}\)
is the set of complex-valued functions \(x(t)\) defined on the open interval
\((-a,a)\) and satisfied the
following conditions:
\begin{enumerate}
\item
The function \((x)(t)\) belongs to the set \(\mathcal{A}\) defined
above;
\item
The support \(\textup{supp}\,x\) of the function \(x(t)\) is a
compact subset of the open interval \((-a,a)\):
\((\textup{supp}\,x)\!\Subset{}(-a,a)\).
\end{enumerate}
\end{defn}
The minimal differential operator \(\mathcal{M}_{{}_\textup{min}}\)
is a restriction of the maximal differential operator \(\mathcal{M}_{{}_\textup{max}}\) on the set
of functions which is some sense vanish at the endpoint of the interval \((-a,a)\).
The precise definition is presented below.
\begin{defn}
\label{MinDO}
\begin{subequations}
\label{mindo}
The operator the \(\mathring{\mathcal{M}}\) is the restriction of the
operator \(\mathcal{M}_{{}_\textup{max}}\) on the set \(\mathring{\mathcal{A}}\)
compactly supported in \((-a,a)\) functions from \(\mathcal{A}\):
\begin{equation}%
\label{mindo1}
\mathcal{D}_{\mathring{\mathcal{M}}}=%
\mathcal{D}_{\!{}_{\mathcal{M}_{\textup{max}}}}\!\cap{}\mathring{\mathcal{A}}\,,\quad %
{\mathring{\mathcal{M}}}\subset\mathcal{M}_{{}_\textup{max}}.
\end{equation}%
The operator \(\mathcal{M}_{{}_\textup{min}}\)
is the closure\,\footnote{%
Since the operator \(\mathring{\mathcal{M}}\) is symmetric and densely defined, it  is closable.}%
 of the operator \(\mathring{\mathcal{M}}\):
\begin{equation}
\label{mindo2}
\mathcal{M}_{{}_\textup{min}}=
\textup{clos}\big(\mathring{\mathcal{M}}\,\big)\,.
\end{equation}
\end{subequations}
The operator \(\mathcal{M}_{\textup{min}}\) is said to be the \emph{minimal differential
operator generated by the differential expression} \(M\).
\end{defn}

\begin{thm}{\ }\\[-2.5ex]
\label{MSt}
\begin{enumerate}
\item
\textit{The operator \(\mathcal{M}_{{}_\textup{min}}\) is symmetric:
\begin{equation*}%
\langle{}\mathcal{M}_{{}_\textup{min}}x,y\rangle=
\langle{}x,\mathcal{M}_{{}_\textup{min}}y{}\rangle\,,
\quad \forall {}x,\,y\in{}%
\mathcal{D}_{\!{}_{\scriptstyle\mathcal{L}_{\textup{min}}}};
\end{equation*}%
In other words, the operator \(\mathcal{M}_{{}_\textup{min}}\) is contained in its adjoint}:
\begin{equation}%
\label{MinSym}%
\mathcal{M}_{{}_\textup{min}}\subseteq(\mathcal{M}_{{}_\textup{min}})^\ast\,;
\end{equation}%
\item
\textit{The operators \(\mathcal{M}_{{}_\textup{min}}\) and \(\mathcal{M}_{{}_\textup{max}}\)
are mutually adjoint}:
\begin{equation}%
\label{ATDO}%
(\mathcal{M}_{{}_\textup{min}})^\ast=\mathcal{M}_{{}_\textup{max}},\quad
(\mathcal{M}_{{}_\textup{max}})^\ast=\mathcal{M}_{{}_\textup{min}}\,.
\end{equation}%
\end{enumerate}
\end{thm}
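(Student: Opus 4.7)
My plan is to establish symmetry via integration by parts on the dense core $\mathring{\mathcal{A}}$, and then reduce the two adjoint identities in \eqref{ATDO} to the single key equality $(\mathring{\mathcal{M}})^{\ast}=\mathcal{M}_{\textup{max}}$. For part (1), since the closure of a symmetric densely defined operator is symmetric, it suffices to check symmetry of $\mathring{\mathcal{M}}$. Density of $\mathring{\mathcal{A}}$ in $L^{2}([-a,a])$ is automatic from the inclusion $C_{c}^{\infty}((-a,a))\subset\mathring{\mathcal{A}}$. For any $x,y\in\mathring{\mathcal{A}}$, the functions $x$ and $x'$ vanish identically in a neighborhood of each endpoint $\pm a$, so two integrations by parts produce no boundary contribution and yield $\langle\mathring{\mathcal{M}}x,y\rangle=\int_{-a}^{a}\bigl(1-t^{2}/a^{2}\bigr)x'(t)\overline{y'(t)}\,dt=\langle x,\mathring{\mathcal{M}}y\rangle$.

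For part (2), since $\mathcal{M}_{\textup{min}}=\overline{\mathring{\mathcal{M}}}$ and closure does not alter the adjoint, $(\mathcal{M}_{\textup{min}})^{\ast}=(\mathring{\mathcal{M}})^{\ast}$; once the key identity $(\mathring{\mathcal{M}})^{\ast}=\mathcal{M}_{\textup{max}}$ is in hand, taking the adjoint once more and invoking the bi-adjoint identity $T^{\ast\ast}=\overline{T}$ applied to the closed densely defined $\mathcal{M}_{\textup{min}}$ delivers $(\mathcal{M}_{\textup{max}})^{\ast}=\mathcal{M}_{\textup{min}}$. The easy inclusion $\mathcal{M}_{\textup{max}}\subseteq(\mathring{\mathcal{M}})^{\ast}$ is once again two integrations by parts: for $y\in\mathcal{D}_{\mathcal{M}_{\textup{max}}}$ and $x\in\mathring{\mathcal{A}}$, the compact support of $x$ inside $(-a,a)$ annihilates the boundary terms and yields $\langle\mathring{\mathcal{M}}x,y\rangle=\langle x,My\rangle$ with $My\in L^{2}$, so $y\in\mathcal{D}_{(\mathring{\mathcal{M}})^{\ast}}$ and $(\mathring{\mathcal{M}})^{\ast}y=My$.

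The hard inclusion $(\mathring{\mathcal{M}})^{\ast}\subseteq\mathcal{M}_{\textup{max}}$ is the main obstacle and requires a local regularity bootstrap. Given $y\in\mathcal{D}_{(\mathring{\mathcal{M}})^{\ast}}$ with $(\mathring{\mathcal{M}})^{\ast}y=z\in L^{2}$, testing against arbitrary $\varphi\in C_{c}^{\infty}((-a,a))\subset\mathring{\mathcal{A}}$ gives $\int(M\varphi)\overline{y}\,dt=\int\varphi\overline{z}\,dt$, which is exactly the distributional equation $-\tfrac{d}{dt}\bigl[(1-t^{2}/a^{2})y'\bigr]=z$ on $(-a,a)$. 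On every compact $K\Subset(-a,a)$ the coefficient $1-t^{2}/a^{2}$ is smooth and bounded below by a positive constant, so the distribution $(1-t^{2}/a^{2})y'$ has distributional derivative equal to the $L^{2}(K)$ function $-z$ and therefore coincides almost everywhere with an absolutely continuous function on $K$; dividing by the nonvanishing smooth coefficient produces a representative of $y$ whose derivative $y'$ exists everywhere on $K$ and is absolutely continuous there, placing $y\in\mathcal{A}$ in the sense of Definition~\ref{DoDeMaDO}, and reading the distributional equation pointwise almost everywhere yields $My=z\in L^{2}$. Hence $y\in\mathcal{D}_{\mathcal{M}_{\textup{max}}}$ with $\mathcal{M}_{\textup{max}}y=z$. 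The only delicate point is the degeneration of $1-t^{2}/a^{2}$ at $t=\pm a$, but because $\varphi$ is compactly supported in the open interval the bootstrap is purely interior and the degeneracy is never touched.
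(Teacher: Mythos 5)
Your proposal is correct, but it is worth noting that the paper does not actually prove Theorem~\ref{MSt} at all: the stated ``proof'' is a citation to \cite[10.4.7--10.4.11]{HuPy}, so you have supplied the argument the author delegates to the literature. Your route is the standard one for this delegation: symmetry and density are checked on the compactly supported core $\mathring{\mathcal{A}}$, where integration by parts carries no boundary terms; the identity $(\mathcal{M}_{\textup{min}})^{\ast}=(\mathring{\mathcal{M}})^{\ast}$ reduces everything to computing the adjoint of the core operator; the inclusion $\mathcal{M}_{\textup{max}}\subseteq(\mathring{\mathcal{M}})^{\ast}$ is again integration by parts; and the reverse inclusion is the interior regularity bootstrap, which is exactly where the real content lies. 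Your handling of that step is sound: testing against $C_{c}^{\infty}((-a,a))$ yields the distributional equation, the antiderivative argument shows $(1-t^{2}/a^{2})y'$ is locally absolutely continuous, and the degeneracy of the coefficient at $\pm a$ is irrelevant because the argument is purely local in the open interval. The final identity $(\mathcal{M}_{\textup{max}})^{\ast}=\mathcal{M}_{\textup{min}}$ via $T^{\ast\ast}=\overline{T}$ and the closedness of $\mathcal{M}_{\textup{min}}$ (it is defined as a closure) is also right. Two cosmetic points: the single integration by parts already gives the symmetric middle form $\int p\,x'\overline{y'}$ (you say ``two''), and when passing from $\langle\mathring{\mathcal{M}}\varphi,y\rangle=\langle\varphi,z\rangle$ to the distributional equation you should note that conjugation is harmless because $M$ has real coefficients and $\overline{\varphi}$ again ranges over all test functions. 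Neither affects validity. What your self-contained argument buys over the paper's citation is that the reader sees precisely where the structure of $\mathcal{A}$ (absolute continuity of $x'$ on compacts) is forced rather than assumed.
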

\begin{proof}
The proof of this theorem can be found in \cite[10.4.7-10.4.11]{HuPy}.
\end{proof}
\section{The boundary linear forms related to the Legendre operator~\(\boldsymbol{M}\).}
\label{SeBLF}
We use the notations
\begin{equation*}%
p(t)=1-\frac{t^2}{a^2}, \quad -a<t<a.
\end{equation*}
In this notation, the formal differential operator \(M\) introduced in
\eqref{DiffExp} is:
\begin{equation*}%
(Mx)(t)=-\frac{d}{dt}\bigg(p(t)\frac{dx(t)}{dt}\bigg), \quad -a<t<a\,.
\end{equation*}
For every \(x,y\in\mathcal{A},\)
\begin{equation*}%
(Mx)(t)\,\overline{y(t)}-x(t)\,{}(\overline{My)(t)}=\frac{d}{dt}[x,y](t),\quad -a<t<a\,,
\end{equation*}%
where
\begin{equation}%
\label{BF}
[x,y](t)=-p(t)\bigg(\frac{dx(t}{dt}\overline{y(t)}-x(t)\overline{\frac{dy(t}{dt}}\bigg)\,\ccomma
\quad -a<t<a,
\end{equation}%
Therefore, for every \(x,y\in\mathcal{A}\) and for every
\(\alpha,\,\beta:\,-a<\alpha<\beta<a\),
\begin{equation}%
\label{GrF}
\int\limits_{\alpha}^{\beta}\Big((Mx)(t)\,%
\overline{y(t)}-x(t)\,{}\overline{(My)(t)}\Big)\,dt=[x,y](\beta)-[x,y](\alpha)\,.
\end{equation}%
\begin{lem}
\label{EBV}
For each \(x,y\in\mathcal{D}_{\mathcal{M}_{\textup{max}}}\), there exist the
limits
\begin{equation}%
\label{LRBFo}%
[x,y]_{-a}\stackrel{\textup{\tiny def}}{=}\lim_{\alpha\to{}-a+0}[x,y](\alpha),\quad
[x,y]^{a}\stackrel{\textup{\tiny def}}{=}\lim_{\beta\to{}a-0}[x,y](\beta)\,,
\end{equation}%
where the expression \([x,y](t)\) is defined in \eqref{BF}.
\end{lem}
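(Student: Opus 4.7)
The strategy is straightforward: the existence of the two limits will be extracted from Green's formula \eqref{GrF} by showing that the right-hand side there is an antiderivative of an $L^{1}$-function, so that the one-sided limits at $\pm a$ exist for trivial measure-theoretic reasons.

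First I would observe that, by the definition of $\mathcal{D}_{\mathcal{M}_{\textup{max}}}$, the four functions $x$, $Mx$, $y$, $My$ all lie in $L^{2}((-a,a))$. By the Cauchy--Schwarz inequality the products $(Mx)(t)\,\overline{y(t)}$ and $x(t)\,\overline{(My)(t)}$ are then in $L^{1}((-a,a))$, and hence so is their difference
\begin{equation*}
h(t)\stackrel{\textup{\tiny def}}{=}(Mx)(t)\,\overline{y(t)}-x(t)\,\overline{(My)(t)}.
\end{equation*}

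Next I would fix an arbitrary reference point $t_{0}\in(-a,a)$ and apply \eqref{GrF} on the intervals $[t_{0},\beta]$ and $[\alpha,t_{0}]$ (which is legitimate because $x,y\in\mathcal{A}$). This gives, for $-a<\alpha<t_{0}<\beta<a$,
\begin{equation*}
[x,y](\beta)=[x,y](t_{0})+\int_{t_{0}}^{\beta}h(t)\,dt,\qquad
[x,y](\alpha)=[x,y](t_{0})-\int_{\alpha}^{t_{0}}h(t)\,dt.
\end{equation*}

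Because $h\in L^{1}((-a,a))$, the absolute continuity of the Lebesgue integral yields the existence of the finite limits
\begin{equation*}
\lim_{\beta\to a-0}\int_{t_{0}}^{\beta}h(t)\,dt=\int_{t_{0}}^{a}h(t)\,dt,\qquad
\lim_{\alpha\to -a+0}\int_{\alpha}^{t_{0}}h(t)\,dt=\int_{-a}^{t_{0}}h(t)\,dt.
\end{equation*}
Substituting these into the two formulas above proves the existence of $[x,y]^{a}$ and $[x,y]_{-a}$ and gives explicit representations
\begin{equation*}
[x,y]^{a}=[x,y](t_{0})+\int_{t_{0}}^{a}h(t)\,dt,\qquad
[x,y]_{-a}=[x,y](t_{0})-\int_{-a}^{t_{0}}h(t)\,dt.
\end{equation*}

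There is no real obstacle: the only thing to check carefully is that Green's formula \eqref{GrF}, which was derived on compact subintervals of $(-a,a)$ for $x,y\in\mathcal{A}$, actually applies here---and it does, since $\mathcal{D}_{\mathcal{M}_{\textup{max}}}\subset\mathcal{A}$ by Definition~\ref{MaxDO}. The whole argument is really a restatement of the fact that $Mx,My\in L^{2}$ forces the boundary form $[x,y](t)$ to have one-sided limits at the endpoints, irrespective of the delicate logarithmic behavior of individual solutions described in Lemma~\ref{ABSNS}.
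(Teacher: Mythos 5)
Your proof is correct and follows essentially the same route as the paper: both deduce $h=(Mx)\overline{y}-x\overline{(My)}\in L^{1}((-a,a))$ from the $L^{2}$ membership of $x,y,Mx,My$ via Cauchy--Schwarz, and then read off the existence of the endpoint limits of $[x,y](t)$ from Green's formula \eqref{GrF}. Your version is, if anything, slightly more explicit than the paper's, since fixing the reference point $t_{0}$ cleanly separates the two one-sided limits rather than appealing only to the joint limit in \eqref{PaLi}.
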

\begin{proof}
Since the functions \(x(t),\,y(t), (Mx)(t), (My)(t)\) belong to \(L^2((-a,a))\), then
 \(\int\limits_{-a}^{a}\Big|(Mx)(t)\,%
\overline{y(t)}-x(t)\,{}\overline{(My)(t)}\Big|\,dt<\infty\).
Therefore the limit\\[-2.0ex]
\begin{multline}
\label{PaLi}
\lim_{\substack{\alpha\to{-a+0}\\
\beta\to{\,a-0}}}\int\limits_{\alpha}^{\beta}\Big((Mx)(t)\,%
\overline{y(t)}-x(t)\,{}\overline{(My)(t)}\Big)\,dt=\\[-2.5ex]
\int\limits_{a}^{b}\Big((Mx)(t)\,%
\overline{y(t)}-x(t)\,{}\overline{(My)(t)}\Big)\,dt
\end{multline}
exists.
Comparing \eqref{PaLi} with \eqref{GrF}, we conclude that the limits in \eqref{LRBFo} exist.
\end{proof}{\ }\\[-4.0ex]
Concerning Lemma \ref{EBV} and related results see \cite[10.4.12-10.4.13]{HuPy}.
\begin{lem}
\label{SkLf}
The expressions \([x,y]_{-a}\) and \([x,y]^{a}\),
which were introduced by \eqref{BF} and \eqref{LRBFo},
are well defined for \(x\in\mathcal{D}_{\mathcal{M}_{\textup{max}}}, \
y\in\mathcal{D}_{\mathcal{M}_{\textup{max}}}\). Considered as functions
of \(x,y\in\mathcal{D}_{\mathcal{M}_{\textup{max}}}\), they are
sesquilinear forms. The forms \([x,y]_{-a}\) and \([x,y]^{a}\) are
skew-hermitian:
\begin{equation}
[x,y]_{-a}=-\overline{[y,x]_{-a}}, \quad [x,y]^{a}=-\overline{[y,x]^{a}},
\quad \forall\,x,y\in\mathcal{D}_{\mathcal{M}_{\textup{max}}}.
\end{equation}
\end{lem}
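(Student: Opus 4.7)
The plan is to establish all three assertions at the pointwise level on $(-a,a)$ first, and then pass to the limit at the endpoints using Lemma \ref{EBV}.

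First, the statement that $[x,y]_{-a}$ and $[x,y]^{a}$ are well defined on $\mathcal{D}_{\mathcal{M}_{\textup{max}}}\times\mathcal{D}_{\mathcal{M}_{\textup{max}}}$ is immediate: Lemma \ref{EBV} guarantees the existence of both limits in \eqref{LRBFo} for any pair $x,y$ in $\mathcal{D}_{\mathcal{M}_{\textup{max}}}$, so no further argument is required at this step.

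Next I would treat sesquilinearity. The expression \eqref{BF} for $[x,y](t)$ involves $x$ and $dx/dt$ linearly, and $\overline{y(t)}$ and $\overline{dy/dt}$ linearly. Since differentiation is a linear operation, the map $x\mapsto[x,y](t)$ is linear and the map $y\mapsto[x,y](t)$ is conjugate-linear, for each fixed $t\in(-a,a)$. Sesquilinearity is preserved by taking limits, so $[x,y]_{-a}$ and $[x,y]^{a}$ are sesquilinear on $\mathcal{D}_{\mathcal{M}_{\textup{max}}}$.

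Finally, for the skew-hermitian property, I would verify it pointwise from \eqref{BF} and then pass to the limit. Direct computation gives
\begin{equation*}
\overline{[y,x](t)}=-p(t)\left(\overline{\frac{dy(t)}{dt}}\,x(t)-\overline{y(t)}\,\frac{dx(t)}{dt}\right)=p(t)\left(\frac{dx(t)}{dt}\overline{y(t)}-x(t)\overline{\frac{dy(t)}{dt}}\right)=-[x,y](t),
\end{equation*}
valid for every $t\in(-a,a)$ and every $x,y\in\mathcal{A}$. Taking $\alpha\to -a+0$ and $\beta\to a-0$ and invoking Lemma \ref{EBV} yields $[x,y]_{-a}=-\overline{[y,x]_{-a}}$ and $[x,y]^{a}=-\overline{[y,x]^{a}}$. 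There is no real obstacle here: the lemma is a formal consequence of the explicit formula \eqref{BF} together with the existence of the endpoint limits already established in Lemma \ref{EBV}; the only thing to be careful about is keeping the conjugations and signs straight in the pointwise identity.
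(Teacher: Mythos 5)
Your proposal is correct and is precisely the routine verification the paper leaves implicit (Lemma \ref{SkLf} is stated there without proof, being regarded as an immediate consequence of the explicit formula \eqref{BF}, the reality of $p(t)$, and the existence of the endpoint limits from Lemma \ref{EBV}). The pointwise identity $\overline{[y,x](t)}=-[x,y](t)$ and the passage to the limit are carried out correctly.
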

\begin{defn}
\label{DBBF}
The forms \([x,y]_{-a}\) and \([x,y]^{a}\) are said to be \emph{the end point sesquilinear forms related to the differential operator \(M\).}
\end{defn}
\begin{thm}
\label{GreFT}
For every \(x\in\mathcal{D}_{\mathcal{M}_{\textup{max}}},
\,y\in\mathcal{D}_{\mathcal{M}_{\textup{max}}}\), the equality
\begin{equation}
\label{GreF}
\langle{}\mathcal{M}_{\textup{max}}\,x,y\rangle-
\langle{}x,\mathcal{M}_{\textup{max}}\,y\rangle=[x,y]^{a}-[x,y]_{-a}
\end{equation}
holds, where \([x,y]_{-a},\,[x,y]^{a}\) are the  end point forms related to the differential operator \(M\).
\end{thm}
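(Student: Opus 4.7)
The plan is to derive the identity by taking limits in the localized Green's formula \eqref{GrF} that already holds on every compact subinterval $[\alpha,\beta]\subset(-a,a)$. Since $\mathcal{D}_{\mathcal{M}_{\text{max}}}\subset\mathcal{A}$, the identity
\[
\int_{\alpha}^{\beta}\bigl((Mx)(t)\,\overline{y(t)}-x(t)\,\overline{(My)(t)}\bigr)\,dt
=[x,y](\beta)-[x,y](\alpha)
\]
is available for every $x,y\in\mathcal{D}_{\mathcal{M}_{\text{max}}}$ and every $-a<\alpha<\beta<a$.

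Next I would pass to the limit as $\alpha\to-a+0$ and $\beta\to a-0$ on both sides. For the left-hand side, the definition of $\mathcal{D}_{\mathcal{M}_{\text{max}}}$ ensures $x,y,Mx,My\in L^{2}((-a,a))$, so by the Cauchy--Schwarz inequality the integrand $(Mx)(t)\,\overline{y(t)}-x(t)\,\overline{(My)(t)}$ belongs to $L^{1}((-a,a))$. Hence the dominated (or in fact absolute) convergence of the integrals yields
\[
\lim_{\substack{\alpha\to-a+0\\ \beta\to a-0}}
\int_{\alpha}^{\beta}\bigl((Mx)(t)\overline{y(t)}-x(t)\overline{(My)(t)}\bigr)\,dt
=\langle\mathcal{M}_{\text{max}}x,y\rangle-\langle x,\mathcal{M}_{\text{max}}y\rangle,
\]
where I use $\mathcal{M}_{\text{max}}x=Mx$ and $\mathcal{M}_{\text{max}}y=My$ from Definition \ref{MaxDO}.

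For the right-hand side, Lemma~\ref{EBV} is precisely the tool: it guarantees that the one-sided limits $[x,y]_{-a}=\lim_{\alpha\to-a+0}[x,y](\alpha)$ and $[x,y]^{a}=\lim_{\beta\to a-0}[x,y](\beta)$ exist for any pair $x,y\in\mathcal{D}_{\mathcal{M}_{\text{max}}}$. Taking these limits in $[x,y](\beta)-[x,y](\alpha)$ produces $[x,y]^{a}-[x,y]_{-a}$. Equating the two limit expressions yields \eqref{GreF}.

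There is no substantial obstacle: the analytical content (square-integrability of the Wronskian-type integrand and the existence of boundary values) has been packaged into the preceding formula \eqref{GrF} and Lemma~\ref{EBV}. The only point to be slightly careful about is justifying that the two limits $\alpha\to-a+0$ and $\beta\to a-0$ may be taken independently; this follows because both sides of \eqref{GrF} decompose additively under the splitting of the interval, so one may first fix $\beta$ and let $\alpha\to-a+0$ (using Lemma~\ref{EBV} at the left endpoint and $L^{1}$-integrability on $(-a,\beta]$), and then let $\beta\to a-0$ (using Lemma~\ref{EBV} at the right endpoint and $L^{1}$-integrability on $(-a,a)$).
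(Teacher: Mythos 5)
Your proof is correct and follows exactly the route the paper takes: its own proof is the one-line observation that \eqref{GreF} follows from \eqref{GrF}, \eqref{PaLi} and \eqref{LRBFo}, and you have simply spelled out those same three ingredients (the local Green identity, the \(L^1\)-convergence of the integral coming from Cauchy--Schwarz, and the existence of the endpoint limits from Lemma~\ref{EBV}). No discrepancies to report.
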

\begin{proof}
The equality \eqref{GreF} is a consequence of the equalities \eqref{PaLi}, \eqref{GrF} and \eqref{LRBFo}.
\end{proof}

\section{The deficiency indices of the operator \(\boldsymbol{\mathcal{M}_{\textup{min}}}\).}
In 1930 John von Neumann, \cite{Neu}, has found a criterion for
the existence of a self-adjoint extension of a symmetric operator
\(A\) and has described all such extensions. This criterion is
formulated in terms of deficiency indices of the symmetric
operator.

\begin{defn}
\label{DefDefInd} Let \(A\) be an operator in a Hilbert space
\(\mathfrak{H}\). We assume that the domain of definition
\(\mathcal{D}_{A}\) is dense in \(\mathfrak{H}\) and that the
operator \(A\) is symmetric, that is\,%
\begin{equation}
\label{SymInc}%
 \langle Ax,y\rangle=\langle x,Ay\rangle,\quad \forall\,x,y\in\mathcal{D}_{A}\,.
\end{equation}
For \(\lambda\in\mathbb{C}\),
consider the orthogonal complement
\begin{equation}
\label{DefSp1}
\mathcal{N}_{\lambda}=\mathfrak{H}\ominus (A-\lambda{}I)\mathcal{D}_{\!A} \,,
\end{equation}
 of  the subspace \((A-\lambda{}I)\mathcal{D}_{\!A}\), or, what is equivalent,
\begin{equation}
\label{DefSp}%
 \mathcal{N}_{\lambda}=\lbrace{}x\in\mathcal{D}_{\!A^{\ast}}:\
 A^\ast{}x=\overline{\lambda}{}x\rbrace\,,
\end{equation}
where \(A^{\ast}\) is the operator adjoint to the operator \(A\),
\(\mathcal{D}_{\!A^{\ast}}\) is the domain of definition of \(A^{\ast}\).

The
subspace \(\mathcal{N}_{\lambda}\) is said to be \emph{the
deficiency subspace of the operator \(A\) corresponding to the value \(\lambda\)}.
\end{defn}
\begin{rem}
\label{FaS}
The equality \eqref{SymInc} implies that \(\mathcal{D}_{A}\subseteq\mathcal{D}_{\!A^{\ast}}\). So the factor space
\(\mathcal{D}_{\!A^{\ast}}/\mathcal{D}_{A}\) is defined.
\end{rem}
\begin{nonumtheorem}[von Neumann]
Let \(A\) be an operator in the Hilbert space \(\mathfrak{H}\). We assume that the domain of definition
\(\mathcal{D}_{A}\) is dense in \(\mathfrak{H}\) and that the
operator \(A\) is symmetric. Then
\begin{enumerate}
\item
 The dimension
\(\dim{}\mathcal{N}_{\lambda}\) is constant in the upper
half-plane and in the lower half-plane:
\begin{subequations}
\label{DefInd}
\begin{align}
\label{DefInd+}
\dim{}\mathcal{N}_{\lambda}= n_{+}\,,\quad & \forall\,\lambda:\,\textup{Im}\,\lambda>0,\\
\label{DefInd-} \dim{}\mathcal{N}_{\lambda}= n_{-}\,,\quad &  \forall\,\lambda:\,
\textup{Im}\,\lambda<0\,,
\end{align}
each of \(n_{+},n_{-}\) may be either non-negative integer or \(+\infty\).
\end{subequations}
The numbers \(n^+\) and \(n^{-}\) are said to be the
\emph{deficiency indices of the operator~\(A\)}.
\item
For the dimension  of
the factor space \(\mathcal{D}_{\!A^{\ast}}/\mathcal{D}_{A}\) the equality
\begin{equation}
\label{DiFa}
\textup{dim}(\mathcal{D}_{\!A^{\ast}}/\mathcal{D}_{A})=n_{+}+n_{-}
\end{equation}
holds.
\end{enumerate}
\end{nonumtheorem}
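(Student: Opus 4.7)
The plan is to prove the two parts via the standard closed-range estimate and a perturbation argument for part~(1), and via von Neumann's direct-sum decomposition for part~(2). Throughout I assume $A$ is closed, as is standard for this theorem and as holds in our intended application to $\mathcal{M}_{\mathrm{min}}$. The foundational step is the identity
\[
\|(A-\lambda I)x\|^2=\|(A-(\operatorname{Re}\lambda)I)x\|^2+(\operatorname{Im}\lambda)^2\|x\|^2,\qquad x\in\mathcal{D}_A,
\]
which follows from the symmetry of $A$ together with $\langle Ax,x\rangle\in\mathbb{R}$. It yields the lower bound $\|(A-\lambda I)x\|\geq|\operatorname{Im}\lambda|\,\|x\|$, so that $(A-\lambda I)\mathcal{D}_A$ is closed whenever $\operatorname{Im}\lambda\neq0$; consequently the definition \eqref{DefSp1} of $\mathcal{N}_\lambda$ as an orthogonal complement coincides with the eigenspace description \eqref{DefSp}.

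For part~(1), I would fix $\lambda_0$ with $\operatorname{Im}\lambda_0>0$ and prove that $\dim\mathcal{N}_\lambda$ is constant in the disc $|\lambda-\lambda_0|<\operatorname{Im}\lambda_0$. Consider the orthogonal projection $P:\mathcal{N}_{\lambda_0}\to\mathcal{N}_\mu$ for $\mu$ in that disc. If $x\in\ker P$, then $x\perp\mathcal{N}_\mu$, so the closed-range property gives $x=(A-\mu I)y$ for some $y\in\mathcal{D}_A$; a short calculation using $A^*x=\overline{\lambda_0}\,x$ and the scalar-product convention $\langle u,v\rangle=\int u\bar v$ of the paper yields $\|x\|^2=(\lambda_0-\mu)\langle y,x\rangle$, after which Cauchy-Schwarz combined with $\|y\|\leq\|x\|/|\operatorname{Im}\mu|$ forces $\|x\|\leq\frac{|\lambda_0-\mu|}{|\operatorname{Im}\mu|}\|x\|$, so $x=0$. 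Hence $P$ is injective and $\dim\mathcal{N}_\mu\geq\dim\mathcal{N}_{\lambda_0}$; reversing the roles of $\lambda_0$ and $\mu$ gives equality, and connectedness of the upper half-plane produces the single constant $n_+$. The identical argument in the lower half-plane gives $n_-$.

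For part~(2), I would establish the decomposition $\mathcal{D}_{A^*}=\mathcal{D}_A\dotplus\mathcal{N}_i\dotplus\mathcal{N}_{-i}$, orthogonal in the graph inner product $\langle x,y\rangle_{\mathrm{gr}}=\langle x,y\rangle+\langle A^*x,A^*y\rangle$. Graph-orthogonality of the three summands is a direct computation exploiting $A^*x_\pm=\mp i\,x_\pm$ for $x_\pm\in\mathcal{N}_{\pm i}$. To show that they span $\mathcal{D}_{A^*}$, given $x\in\mathcal{D}_{A^*}$ I would apply $A^*-iI$ and decompose the result as $(A^*-iI)x=(A-iI)u+w$ along the orthogonal splitting $\mathfrak{H}=\operatorname{Ran}(A-iI)\oplus\mathcal{N}_i$ (valid because the range is closed); then $v_+:=w/(-2i)\in\mathcal{N}_i$, and one checks directly that $v_-:=x-u-v_+$ satisfies $(A^*-iI)v_-=0$, so $v_-\in\mathcal{N}_{-i}$. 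The equality $\dim(\mathcal{D}_{A^*}/\mathcal{D}_A)=n_++n_-$ follows at once. The only delicate point is keeping track of the scalar-product conventions in the perturbation step so that the estimate closes with $|\lambda_0-\mu|$ rather than $|\lambda_0-\overline{\mu}|$; everything else is essentially bookkeeping.
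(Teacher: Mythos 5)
The paper does not actually prove this theorem: it is stated as a classical result and attributed to von Neumann with a reference to \cite{Neu}, so there is no in-paper argument to compare against. Your proof is the standard textbook argument (essentially the one in Akhiezer--Glazman or Reed--Simon) and is correct in all essentials: the identity $\|(A-\lambda I)x\|^{2}=\|(A-(\operatorname{Re}\lambda)I)x\|^{2}+(\operatorname{Im}\lambda)^{2}\|x\|^{2}$, the resulting closed range and the equivalence of \eqref{DefSp1} with \eqref{DefSp}, the projection argument for constancy of $\dim\mathcal{N}_{\lambda}$, and the graph-orthogonal decomposition $\mathcal{D}_{A^{*}}=\mathcal{D}_{A}\dotplus\mathcal{N}_{i}\dotplus\mathcal{N}_{-i}$ for part (2) are all right. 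You also correctly track the paper's convention $\mathcal{N}_{\lambda}=\ker(A^{*}-\bar{\lambda}I)$, so that $A^{*}x_{\pm}=\mp i\,x_{\pm}$ on $\mathcal{N}_{\pm i}$, and your explicit closedness hypothesis is the right call: without it part (2) as literally stated can fail (one must replace $\mathcal{D}_{A}$ by $\mathcal{D}_{\bar{A}}$), and it does hold for $\mathcal{M}_{\textup{min}}$, which is defined as a closure.

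One quantitative slip: the estimate $\|x\|\le\frac{|\lambda_{0}-\mu|}{|\operatorname{Im}\mu|}\,\|x\|$ forces $x=0$ only when $|\lambda_{0}-\mu|<|\operatorname{Im}\mu|$, and this is not guaranteed throughout the disc $|\mu-\lambda_{0}|<\operatorname{Im}\lambda_{0}$ (take $\mu$ near the real axis, e.g.\ $\lambda_{0}=i$, $\mu=i/10$). So the one-shot claim of constancy on that whole disc is not established. What the estimate does give is injectivity of the projection in both directions whenever, say, $|\mu-\lambda_{0}|<\tfrac12\operatorname{Im}\lambda_{0}$, hence local constancy of $\dim\mathcal{N}_{\lambda}$, and the connectedness argument you already invoke then yields the global constants $n_{+}$ and $n_{-}$. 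This is a repair of constants, not of ideas; the proof stands.
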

\begin{nonumtheorem}[von Neumann] Let \(A\) be a densely defined symmetric operator
and \(n_{+},\,n_{-}\) are its deficiency indices. Then
\begin{enumerate}
\item
The operator
\(A\) is self-adjoint if and only if \(n_+=n_-=0\).
\item
The  operator \(A\) admits self-adjoint
extensions if and only if its deficiency indices are equal:
\begin{equation}
\label{EqDI}
n_{+}=n_{-}\,.
\end{equation}
\item
Assume that the deficiency indices of the operator \(A\) are equal and non-zero:
\(0<n_{+}=n_{-}\leq\infty\). Choose a pair of non-real conjugated
complex numbers, for example \(\lambda=i\),
\(\overline{\lambda}=-i\). The set of all self-adjoint extensions
of the operator \(A\) is in one-to-one correspondence with the set
of all unitary operators acting from the deficiency subspace
\(\mathcal{N}_i\) into the deficiency subspace
\(\mathcal{N}_{-i}\).
\end{enumerate}
\end{nonumtheorem}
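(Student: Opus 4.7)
The plan is to prove both of von Neumann's theorems by reducing every statement to the Cayley transform. The starting point is the identity
\begin{equation*}
\|(A-\lambda I)x\|^2 = \|(A-(\operatorname{Re}\lambda)I)x\|^2 + (\operatorname{Im}\lambda)^2\|x\|^2,\qquad x\in\mathcal{D}_A,
\end{equation*}
which follows at once from symmetry of $A$. It shows that $A-\lambda I$ is injective and bounded below whenever $\operatorname{Im}\lambda\neq 0$, so $(A-\lambda I)\mathcal{D}_A$ is closed after passing to the closure of $A$. Taking orthogonal complements gives the equivalence of the two descriptions of $\mathcal{N}_\lambda$ in Definition~\ref{DefDefInd}, namely $\mathfrak{H}\ominus (A-\lambda I)\mathcal{D}_A = \ker(A^*-\overline{\lambda}I)$.

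The first step of the proof proper is to introduce the Cayley transform
\begin{equation*}
V = (A-iI)(A+iI)^{-1},\qquad V:(A+iI)\mathcal{D}_A\longrightarrow (A-iI)\mathcal{D}_A,
\end{equation*}
which the same identity shows to be an isometry between these two (dense in their closures) subspaces. Its closure is an isometry between closed subspaces whose orthogonal complements are precisely $\mathcal{N}_{-i}$ and $\mathcal{N}_i$. The Cayley correspondence $A\leftrightarrow V$ is a bijection between densely defined closed symmetric operators on $\mathfrak{H}$ and closed partial isometries on $\mathfrak{H}$, under which symmetric extensions of $A$ correspond to isometric extensions of $V$, and self-adjoint extensions correspond to extensions of $V$ that are unitary on all of $\mathfrak{H}$. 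To establish constancy of $\dim\mathcal{N}_\lambda$ on each open half-plane, I would use a standard Neumann-series perturbation argument near a fixed non-real $\lambda_0$, yielding local constancy and hence constancy on each connected component of $\mathbb{C}\setminus\mathbb{R}$.

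With this machinery, all five assertions follow uniformly. $A$ is self-adjoint iff $V$ is already unitary iff $\mathcal{N}_{\pm i}=\{0\}$, i.e.\ $n_+=n_-=0$. A unitary extension $\widetilde{V}$ of $V$ to all of $\mathfrak{H}$ amounts to a choice of isometry from $\mathcal{N}_{-i}$ onto $\mathcal{N}_i$, equivalently of unitary $U:\mathcal{N}_i\to\mathcal{N}_{-i}$; such an extension exists iff $\dim\mathcal{N}_i=\dim\mathcal{N}_{-i}$, and then the self-adjoint extensions of $A$ are in bijection with such $U$. The factor-space formula \eqref{DiFa} is extracted from von Neumann's direct-sum decomposition
\begin{equation*}
\mathcal{D}_{A^*} \,=\, \mathcal{D}_{\overline{A}} \,\dotplus\, \mathcal{N}_i \,\dotplus\, \mathcal{N}_{-i},
\end{equation*}
understood as a direct sum in the graph inner product of $A^*$. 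One proves this by observing that $A^*$ acts as $\mp iI$ on $\mathcal{N}_{\pm i}$ (so the three summands are pairwise linearly independent in the graph norm) and that any $y\in\mathcal{D}_{A^*}$ can be decomposed by taking graph-orthogonal projections onto $\mathcal{N}_i$ and $\mathcal{N}_{-i}$, leaving a remainder in $\mathcal{D}_{\overline{A}}$.

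The main obstacle, I expect, is the direct-sum decomposition of $\mathcal{D}_{A^*}$: one must verify both the linear independence of the three summands and the fact that they span, neither of which is fully automatic. The rest reduces either to the single identity displayed above or to routine functional analysis. Nothing in the argument depends on the specific structure of the Legendre operator $M$; it is a general fact about densely defined symmetric operators in Hilbert space, which is why the paper cites it in abstract form.
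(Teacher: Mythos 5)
The paper does not prove this statement at all: it is quoted as a classical theorem with a reference to von Neumann's paper \cite{Neu}, and the very next section of the paper notes that von Neumann's method is the reduction to unitary extensions of the Cayley transform \((A-iI)(A+iI)^{-1}\). Your sketch is exactly that standard argument (correctly matched to the paper's convention \(\mathcal{N}_\lambda=\ker(A^\ast-\overline{\lambda}I)\), so that the domain and range defects of the Cayley transform are \(\mathcal{N}_{-i}\) and \(\mathcal{N}_{i}\)), and it is correct as far as a sketch can be.
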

We apply the von Neumann Theorem to the situation where the
operator \(\mathcal{M}_{\textup{min}}\) is taken as the operator
\(A\). Then the equation \[A^\ast{}x=\lambda{}x\] takes the
form
\[\mathcal{M}_{\textup{max}}x=\overline{\lambda}x\,.\] This is the
differential equation
\begin{equation}
\label{DEFEWPr}
-\frac{d\,\,}{dt}\bigg(1-\frac{t^2}{a^2}\bigg)\frac{dx(t)}{dt}=
\overline{\lambda}{}x(t),\quad
 t\in(-a,a),
\end{equation}
under the extra condition \(x(t)\in{}L^2(-a,a).\) In particular,
the dimension of the deficiency space \(\mathcal{N}_{\lambda}\)
coincides with the dimension of the linear space of the set of
solutions of the equation \eqref{DEFEWPr} belongings to
\(L^2(-a,a)\). According to Lemma \ref{InArS}, every solution of
the equation \eqref{DEFEWPr} belongs to \(L^2(-a,a)\). Thus we
prove the following
\begin{lem}
\label{LDefInd}%
 For the operator \(\mathcal{M}_{\textup{min}}\), the deficiency
 indices are:
 \begin{equation}
 \label{dimdo}%
 n_{+}(\mathcal{M}_{\textup{min}})=2,\quad n_{-}(\mathcal{M}_{\textup{min}})=2\,.
\end{equation}
\end{lem}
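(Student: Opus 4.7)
The plan is to identify the deficiency subspaces $\mathcal{N}_{\lambda}$ with the $L^2$-solution spaces of the eigenvalue equation $\mathcal{M}_{\textup{max}}x=\overline{\lambda}x$ and then invoke Lemma \ref{InArS} to show every formal solution is already square integrable.

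First I would use Theorem \ref{MSt}, which gives $(\mathcal{M}_{\textup{min}})^{\ast}=\mathcal{M}_{\textup{max}}$, to rewrite the characterization \eqref{DefSp} of von Neumann's deficiency subspaces as
\begin{equation*}
\mathcal{N}_{\lambda}=\{x\in\mathcal{D}_{\mathcal{M}_{\textup{max}}}:\mathcal{M}_{\textup{max}}x=\overline{\lambda}x\},
\end{equation*}
that is, the set of $L^2((-a,a))$-solutions of the differential equation \eqref{DEFEWPr}. Then I would observe that any $x$ satisfying $Mx=\overline{\lambda}x$ pointwise on $(-a,a)$ with $x\in L^2((-a,a))$ automatically lies in $\mathcal{D}_{\mathcal{M}_{\textup{max}}}$: classical solutions of a second-order linear ODE with smooth coefficients on the open interval $(-a,a)$ belong to $\mathcal{A}$ (since the only singular points are the endpoints $\pm a$), and the relation $Mx=\overline{\lambda}x$ forces $Mx\in L^2((-a,a))$.

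Next I would compute the dimension. Equation \eqref{DEFEWPr} is a second-order linear ODE with analytic coefficients on the open interval $(-a,a)$, so by the standard existence and uniqueness theorem the space of all classical solutions on $(-a,a)$ is exactly two-dimensional. By Lemma \ref{InArS}, every such solution satisfies $\int_{-a}^{a}|x(t,\lambda)|^{2}\,dt<\infty$; equivalently, the $L^2$-condition cuts nothing out of the two-dimensional solution space. Combining the two observations yields $\dim\mathcal{N}_{\lambda}=2$ for \emph{every} $\lambda\in\mathbb{C}$, and in particular for $\lambda=i$ and $\lambda=-i$. This gives $n_{+}(\mathcal{M}_{\textup{min}})=n_{-}(\mathcal{M}_{\textup{min}})=2$, which is \eqref{dimdo}.

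There is no genuine obstacle here: the content of the lemma is already done in Lemma \ref{InArS}, and the present statement is essentially a packaging of von Neumann's definition of deficiency indices together with that earlier fact. The only point requiring care in the write-up is the verification that $L^{2}$-solutions of the ODE automatically lie in $\mathcal{D}_{\mathcal{M}_{\textup{max}}}$ (i.e.\ membership in $\mathcal{A}$ and the condition $Mx\in L^{2}$), which is a routine consequence of the regularity of the coefficients on $(-a,a)$ and of the eigenvalue equation itself.
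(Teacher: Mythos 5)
Your proposal is correct and follows essentially the same route as the paper: the paper likewise identifies \(\mathcal{N}_{\lambda}\) with the \(L^2\)-solution space of \(\mathcal{M}_{\textup{max}}x=\overline{\lambda}x\) via \((\mathcal{M}_{\textup{min}})^{\ast}=\mathcal{M}_{\textup{max}}\) and then cites Lemma \ref{InArS} to conclude that all two linearly independent solutions are square integrable, giving \(n_{+}=n_{-}=2\). Your added verification that \(L^2\)-solutions automatically lie in \(\mathcal{D}_{\mathcal{M}_{\textup{max}}}\) is a detail the paper leaves implicit, but it is the same argument.
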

Thus, the operator \(\mathcal{M}_{\textup{min}}\) is symmetric,
but not self-adjoint, and the set of all its self-adjoint extensions
can by parameterized by the set of all unitary
operators acting from the two-dimensional deficiency subspace \(\mathcal{N}_{i}\)
into the two-dimensional deficiency subspace \(\mathcal{N}_{-i}\). However we
use another parametrization.

\section{Self-adjoint extensions of operators and self-orthogonal subspaces.}

J. von Neumann, \cite{Neu},  reduced the construction of a
self-adjoint extension for a symmetric operator \(A_0\) to an
equivalent problem of construction of an unitary extension of an
appropriate isometric operator - the Caley transform of this
symmetric operator. This approach was also developed by M.\,Stone,
\cite{St}, and then used by many others.

In some situations, it is much more convenient to use the
construction of extensions based on the so called \emph{boundary
forms}. The usage of such construction is especially convenient
for differential operators. The first version of the extension
theory based on abstract symmetric boundary conditions was
developed by J.W.\,Calkin, \cite{Cal}. Afterwards, various
versions of the extension theory of symmetric operators  were developed
in terms
of abstract boundary conditions. The problem
of the descriptions of extensions of symmetric relations
was also considered. See \cite{RoB}, \cite{Koch}, \cite{Br}.

Let \(A\) be a symmetric operator  acting
in a Hilbert space \(\mathfrak{H}\). We assume that the domain of definition
\(\mathcal{D}_A\) of the operator \(A\) is dense in \(\mathfrak{H}\) and that
the operator \(A\) is closed. Since \(A\) is symmetric and densely defined, the adjoint operator \(A^\ast\) exists, and \(A\subseteq A^{\ast}\), that is
\(\mathcal{D}_A\subseteq\mathcal{D}_{A^\ast},\ Ax=A^{\ast}x,\ \forall
x\in\mathcal{D}_A.\) Since \(A\) is closed, the equality \((A^{\ast})^\ast=A\) holds.

We relate
the form \(\Omega\) to the operator \(A\):
\begin{subequations}
\label{HerFor}
\begin{equation}%
\label{HerFor1}
%\tag{\ref{HerFor}a}
\Omega(x,y)=
\frac{\langle{}A^{\ast}{}x,y\rangle-\langle{}x,A^\ast{}y\rangle}{i}\,,
\quad \Omega:\,\mathcal{D}_{\!A^{{}^{\ast}}}\times{}%
 \mathcal{D}_{\!A^{{}^\ast}}\to\mathbb{C}\,.
\end{equation}%
The form \(\Omega\) is hermitian:
\begin{equation}%
\label{Herm}
\Omega(x,y)=\overline{\Omega(y,x)},\qquad \forall{}x,y\in \mathcal{D}_{\!A^{{}^{\ast}}},
\end{equation}%
and possesses the property
\begin{equation}%
\label{Nuls}
\Omega(x,y)=0,\quad \forall\,x\in{}\mathcal{D}_{A^{{\ast}}},\,y\in{}\mathcal{D}_{A}\,.
\end{equation}%
\end{subequations}
This property allows to consider the form \(\Omega\) as a form on the factor-space \(\mathcal{E}\):
\begin{equation}%
\label{FaSp}%
\mathcal{E}=
\mathcal{D}_{A^\ast}\big%
/\mathcal{D}_{A}\,.
\end{equation}%
We use the same notation for the form induced on the factor space \(\mathcal{E}\):
\begin{equation}%
\label{HerFor2}
\Omega(x,y)=\frac{\langle{}A^{\ast}{}x,y\rangle-\langle{}x,A^\ast{}y\rangle}{i}\,\ccomma
\quad
\Omega:\,\mathcal{E}\times{}\mathcal{E}\to\mathbb{C}\,.
\end{equation}%

\begin{defn}
\label{BoundForm}
The form \(\Omega\), \eqref{HerFor}, is said to be the \emph{boundary form.}
The factor space \(\mathcal{E}\) is said to be the \emph{boundary space}.
\end{defn}According to von Neumann Theorem,
\begin{equation}%
\label{DimBS}
 \dim{}\mathcal{E}=n_{+}+n_{-}\,,
\end{equation}%
where \(n_{+}\) and \(n_{-}\) are deficiency indices of the operator \(A\).

\begin{lem}
\label{LnDe}
The form \(\Omega\) is not degenerate on \(\mathcal{E}\). In other words,
for each non-zero \(x\in\mathcal{E}\), there exists \(y\in\mathcal{E}\)
 such that \(\Omega(x,y)\not=0\,.\)
\end{lem}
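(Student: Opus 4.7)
The plan is to argue by contraposition: I want to show that if $x \in \mathcal{D}_{A^\ast}$ is such that its class $[x]$ in $\mathcal{E}$ satisfies $\Omega([x],[y])=0$ for every $[y]\in\mathcal{E}$, then $x$ must already lie in $\mathcal{D}_A$, i.e., $[x]=0$. This is the only content of non-degeneracy on the factor space.

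So assume $x\in\mathcal{D}_{A^\ast}$ and
\begin{equation*}
\langle A^\ast x, y\rangle - \langle x, A^\ast y\rangle = 0
\quad\text{for every } y\in\mathcal{D}_{A^\ast}.
\end{equation*}
Rewriting this as $\langle x, A^\ast y\rangle = \langle A^\ast x, y\rangle$ for every $y\in\mathcal{D}_{A^\ast}$, I recognize the defining property of the adjoint of $A^\ast$: it says precisely that $x\in\mathcal{D}_{(A^\ast)^\ast}$ and $(A^\ast)^\ast x = A^\ast x$. Since the operator $A$ is densely defined and closed (the running assumption stated just before the definition of $\Omega$), von Neumann's identity $(A^\ast)^\ast = A$ applies. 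Hence $x\in\mathcal{D}_A$, which is exactly the statement $[x]=0$ in $\mathcal{E}=\mathcal{D}_{A^\ast}/\mathcal{D}_A$.

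A symmetric argument handles the second slot, but in fact it is not needed: non-degeneracy of a hermitian form in one variable implies it in the other via \eqref{Herm}. I would also note that the quotient makes sense and $\Omega$ descends to $\mathcal{E}\times\mathcal{E}$ precisely because of property \eqref{Nuls}, so the statement of the lemma is unambiguous.

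There is no real obstacle here; the only subtlety is to make sure that the closedness of $A$ is invoked when identifying $(A^\ast)^\ast$ with $A$, since without this identification one would only recover $x\in\mathcal{D}_{(A^\ast)^\ast}$, which in general could be strictly larger than $\mathcal{D}_A$. The lemma is thus essentially a restatement of $(A^\ast)^\ast = A$ for a closed densely defined symmetric operator, repackaged in terms of the boundary form.
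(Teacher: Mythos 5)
Your argument is correct and is essentially identical to the paper's own proof: both reduce the vanishing of $\Omega(x,\cdot)$ to the defining property of the adjoint of $A^\ast$ and then invoke $(A^\ast)^\ast=A$ for the closed, densely defined symmetric operator $A$. Your explicit remark that closedness is the crucial hypothesis here is a worthwhile point that the paper leaves implicit.
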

\begin{proof} Let \(x\in\mathcal{D}_{A^\ast}\) be given. We assume that
\(\Omega(x,y)=0,\ \forall\,y\in\mathcal{D}_{A^\ast}\). This means that
\(\langle{}x,A^\ast{}y\rangle=\langle{}A^{\ast}{}x,y\rangle,\,
\forall\,y\in\mathcal{D}_{A^\ast}\). The last equality means that \(x\in\mathcal{D}_{(A^\ast)^\ast}\) and \(A^{\ast}x=(A^\ast)^{\ast}x\).
Since \((A^\ast)^{\ast}=A\), we conclude that \(x\in\mathcal{D}_{A}\).
\end{proof}
The definitions of the boundary form and the boundary space can be found in \cite[\S 1]{Str}.\\[2.0ex]

Let \(\mathcal{S}\) be a subspace of the factor space \(\mathcal{E}\):
\begin{subequations}
\label{extsp}
\begin{equation}%
\label{extsp1}
\mathcal{S}\subseteq\mathcal{E}\,.
\end{equation}%
 We identify
\(\mathcal{S}\) with its preimage with respect to the factor-mapping
\(\mathcal{D}_{A^\ast}\to\mathcal{D}_{A^\ast}\big/\mathcal{D}_{A}\,(=\mathcal{E})\)
and use  the same notation \(\mathcal{S}\) for a subspace in \(\mathcal{E}\)
and for its preimage in \(\mathcal{D}_{A}\):
\begin{equation}%
\label{extsp2}
\mathcal{D}_{A}\subseteq\mathcal{S}\subseteq\mathcal{D}_{A^\ast}\,.
\end{equation}%
\end{subequations}
To every \(\mathcal{S}\) satisfying \eqref{extsp2}, an extension of the
operator \(A\) is related. We denote this extension by \(A_{\mathcal{S}}\):
\[\mathcal{D}_{A_{\mathcal{S}}}=\mathcal{S}\,,\quad
A_{\mathcal{S}}\,x={}A^\ast \,x,\ \ \ \forall\,x\in\mathcal{S}. \]
The operator \((A_{\mathcal{S}})^{\ast}\), which is the operator adjoint to
the the operator \(A_{\mathcal{S}}\), is related to the subspace
\(\mathcal{S}^{\bot_\Omega}\):
\begin{equation}%
\label{sadop}
(A_{\mathcal{S}})^\ast=A_{\mathcal{S}^{\bot_\Omega}}\,,
\end{equation}%
where \(\mathcal{S}^{\bot_{\Omega}}\) is the \emph{orthogonal
complement} of the subspace \(\mathcal{S}\) with respect to the
hermitiam form \(\Omega\):
\begin{equation}%
\label{orcom}
\mathcal{S}^{\bot_\Omega}=\lbrace{}x\in\mathcal{E}:\ \ \Omega(x,y)=0\ \ %
 \forall\,y\in\mathcal{S}\rbrace\,.
\end{equation}%
In particular the following result holds:
\begin{lem}
\label{CrSel}
The extension \(A_{\mathcal{S}}\) of the symmetric operator \(A\)
is a self-adjoint operator: \(A_{\mathcal{S}}=(A_{\mathcal{S}})^\ast\),
if and only if the subspace \(\mathcal{S}\) which appears in \eqref{extsp2}
possesses the property:
\begin{equation}%
\label{SeOrSu}
\mathcal{S}=\mathcal{S}^{\bot_\Omega}\,.
\end{equation}%
\end{lem}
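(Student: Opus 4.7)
The plan is to derive Lemma~\ref{CrSel} as an immediate corollary of the identity \eqref{sadop}, namely $(A_{\mathcal{S}})^\ast=A_{\mathcal{S}^{\bot_\Omega}}$, which has just been stated. No new analytic input is needed; only the definitions of $A_{\mathcal{S}}$ and of $\mathcal{S}^{\bot_\Omega}$, together with \eqref{Nuls}, need to be combined.

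First I would check that $A_{\mathcal{S}^{\bot_\Omega}}$ is a legitimate extension in the sense of \eqref{extsp2}, i.e. that the preimage of $\mathcal{S}^{\bot_\Omega}$ in $\mathcal{D}_{A^\ast}$ satisfies $\mathcal{D}_A\subseteq\mathcal{S}^{\bot_\Omega}\subseteq\mathcal{D}_{A^\ast}$. The upper inclusion holds by construction, and the lower inclusion is precisely \eqref{Nuls}: for any $x\in\mathcal{D}_A$ and any $y\in\mathcal{D}_{A^\ast}$ (in particular for $y\in\mathcal{S}$), $\Omega(x,y)=0$. Thus $A_{\mathcal{S}^{\bot_\Omega}}$ is well defined as an operator acting as $A^\ast$ on the domain $\mathcal{S}^{\bot_\Omega}$.

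Next, I would note the elementary fact that, among the extensions parameterized by \eqref{extsp2}, two of them coincide as operators if and only if their domains coincide; indeed, all such extensions act by the same rule $x\mapsto A^\ast x$, so equality of operators reduces to equality of domains, which (since all such domains contain $\mathcal{D}_A$) is the same as equality of the corresponding subspaces of the factor space $\mathcal{E}$. Applying this observation to $\mathcal{S}$ and $\mathcal{S}^{\bot_\Omega}$ and combining with \eqref{sadop} gives the chain
\begin{equation*}
A_{\mathcal{S}}=(A_{\mathcal{S}})^\ast \;\Longleftrightarrow\; A_{\mathcal{S}}=A_{\mathcal{S}^{\bot_\Omega}}\;\Longleftrightarrow\; \mathcal{S}=\mathcal{S}^{\bot_\Omega},
\end{equation*}
which is exactly \eqref{SeOrSu}.

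There is essentially no obstacle at this step, since the substantive content has been placed in \eqref{sadop}. If one were obliged to justify \eqref{sadop} itself, the only non-trivial point would be to unfold the definition of the adjoint: $x\in\mathcal{D}_{(A_{\mathcal{S}})^\ast}$ precisely when the functional $y\mapsto\langle A^\ast y,x\rangle$ is bounded on $\mathcal{S}$, which, combined with the containment $\mathcal{D}_A\subseteq\mathcal{S}$ and the identity $\langle A^\ast x,y\rangle-\langle x,A^\ast y\rangle=i\,\Omega(x,y)$ from \eqref{HerFor1}, forces simultaneously $x\in\mathcal{D}_{A^\ast}$ and $\Omega(x,y)=0$ for all $y\in\mathcal{S}$, i.e. $x\in\mathcal{S}^{\bot_\Omega}$.
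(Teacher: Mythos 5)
Your proposal is correct and matches the paper's treatment: the paper states the identity \eqref{sadop}, $(A_{\mathcal{S}})^\ast=A_{\mathcal{S}^{\bot_\Omega}}$, and then presents Lemma \ref{CrSel} as its immediate consequence, exactly the reduction you carry out. Your additional remarks — that \eqref{Nuls} guarantees $\mathcal{D}_A\subseteq\mathcal{S}^{\bot_\Omega}$ so that $A_{\mathcal{S}^{\bot_\Omega}}$ is a legitimate extension, that equality of such extensions reduces to equality of domains, and the sketch of how \eqref{sadop} follows from the definition of the adjoint — are accurate fillings-in of details the paper leaves implicit.
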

\begin{defn}
\label{DeSoSu} The subspace \(\mathcal{S}\) of the boundary space
\(\mathcal{E}\) is said to be
\(\Omega\)-\emph{self-orthogonal} if it possess the
property \eqref{SeOrSu}.
\end{defn}
Thus, \emph{the problem of description of all self-adjoint
extension of a symmetric operator \(A_0\) can be reformulate as
the problem of description of subspaces of the space
\(\mathcal{E}\), \eqref{FaSp}, which are self-orthogonal
with respect to
the (non-degenerated) form \(\Omega\), \eqref{HerFor2}.}

It turns out that  self-orthogonal subspaces exist if and
only if the form \(\Omega\), \eqref{HerFor2}, has equal numbers of
positive and negative squares. (Which conditions is equivalent to
the condition \(n_{+}=n_{-}\).)

\noindent

\section{Self-adjoint extensions of symmetric differential operators.}

The description of self-adjoint extensions of a symmetric operator \(A\) becomes
especially transparent in the case when this symmetric operator is a
formally self-adjoint ordinary differential operator, regular or singular. In this case the
boundary form \(\Omega\), \eqref{HerFor1}, can be expressed in term of
the endpoint forms \([x,y]_{-a}\) and \([x,y]_{a}\), which were introduced in
section \ref{SeBLF}. See Definition \ref{DBBF}. This justifies the terminology introduced in
Definition~\ref{BoundForm}.

We illustrate the situation as applied to the case where the
symmetric operator \(A\) is the minimal differential operator
\(\mathcal{M}_{\textup{min}}\) generated by the formal Legendre differential operator \(M\). Then the adjoint operator
\(A^\ast\) is the maximal differential operator
\(\mathcal{M}_{\textup{max}}\) (See Definitions \ref{MinDO} and
\ref{MaxDO}.)

 The problem of description of self-adjoint
differential operators generated by a given formal differential
operator has the long history. See, for example, \cite{Kr},
\cite[Chapter 5]{Nai}. The book of \cite{DuSch} is the storage of
wisdom in various aspects of the operator theory, in particular is
self-adjoint ordinary differential operators. See especially
Chapter XIII of \cite{DuSch}.

We could incorporate this issue to one or another existing abstract scheme.
 However to
adopt our question to such a scheme one need to agree the
notation, the terminology, etc. This auxiliary work may obscure
the presentation. To make the presentation more transparent, we
prefer to act independently on the existing general considerations.

Let us consider the boundary form \(\Omega_M\), constructed from the operator
\(A=\mathcal{M}_{\textup{min}}\) according to \eqref{HerFor1}. Using Theorem \ref{MSt} we conclude that
\begin{equation}%
\label{LRBFo1}
\Omega_M(x,y)=\frac{\mathcal{\langle{}M}_{\textup{max}}\,x,y\rangle-
\langle{}x{},\mathcal{M}_{\textup{max}}\,y\rangle}{i}\ccomma\quad \forall\,x,y\in\mathcal{D}_{\mathcal{M}_{\textup{max}}}.
\end{equation}%
The appropriate boundary space \(\mathcal{E}_M\) is:
\begin{equation}
\label{BSLO}
\mathcal{E}_M=\mathcal{D}_{\mathcal{M}_{\textup{max}}}\big/
\mathcal{D}_{\mathcal{M}_{\textup{min}}}.
\end{equation}
According to \eqref{DimBS} and Lemma \ref{LDefInd},
\begin{equation}%
\label{DimFS}
\dim\,\mathcal{E}_M=4\,.
\end{equation}%
By Theorem \ref{GreFT}, the boundary form \(\Omega_M\)
can be expressed in the term of the end point forms
\([x,y]_{-a},\,[x,y]^{a}\):
\begin{equation}%
\label{LRBFo2}%
\Omega_M(x,y)=\frac{[x,y]^{a}-[x,y]_{-a}}{i}\,\ccomma\quad \forall\,x,y\in\mathcal{D}_{\mathcal{M}_{\textup{max}}}.
\end{equation}
To make calculation explicit, we choose a special basis in the
space \(\mathcal{E}_M\).  The asymptotic behavior of solutions
of the equation \(Lx=0\) near the endpoints of the interval
\((-a,a)\), described in Lemma \ref{ABSNS}, prompts us the choice
of such a basis.

Let us choose and fix smooth real valued functions \(\varphi_{-}(t), \psi_{-}(t),
\varphi_{+}(t)\), \(\psi_{+}(t)\) defined on the interval \((-a,a)\) such that
\begin{subequations}
\label{SpBa}
\begin{alignat}{4}
&\varphi_{-}(t)=1,\ &-a<t<-a/2,&\quad &\varphi_{-}(t)&=0, \
&a/2<t<a\,,\\
&\psi_{-}(t)=\ln(a+t),\ &-a<t<-a/2,&\quad &\psi_{-}(t)&=0, \
&a/2<t<a\,,\\
 &\varphi_{+}(t)=0, \ &-a<t<-a/2\,,&\quad &\varphi_{+}(t)&=1,\
&a/2<t<a\,,\\
&\psi_{+}(t)=0, \  &-a<t<-a/2\,,&\quad &\psi_{+}(t)&=\ln(a-t),\
&a/2<t<a\,.
\end{alignat}
\end{subequations}
It is clear that
\begin{equation}
\label{Bel}
\varphi_{-}\in\mathcal{D}_{\mathcal{M}_{\textup{max}}},\ \  \psi_{-}\in\mathcal{D}_{\mathcal{M}_{\textup{max}}}, \ \
\varphi_{+}\in\mathcal{D}_{\mathcal{M}_{\textup{max}}}, \ \ \psi_{+}\in\mathcal{D}_{\mathcal{M}_{\textup{max}}}.
\end{equation}

The next calculations are based on the representation \eqref{LRBFo2}.
Since the end point forms \([x,y]_{-a}, [x,y]^{a}\) are skew-hermitian,
 then \(\Omega_M(\chi,\chi)=0\) for each real valued function
 \(\chi\in\mathcal{D}_{\mathcal{M}_{\textup{max}}}\). In
particular,
\begin{subequations}
\label{CBF}
\begin{equation}
\label{CBF1}
\Omega_M(\chi,\chi)=0,\ \ \textup{if \(\chi\) is one of the functions } \varphi_{-},\psi_{-},
\varphi_{+},\psi_{+}\,.
\end{equation}
It is clear that
\begin{equation}
\label{CBF2}
\Omega_M(\chi_{-},\chi_{+})=0,\ \ \textup{if \(\chi_{\pm}\) is one of the functions }
\varphi_{\pm},\psi_{\pm}\,.
\end{equation}
 Direct calculation shows that
\begin{equation}
\label{CBF3}
\Omega_M(\varphi_{-},\psi_{-})=\frac{2i}{a},\quad \Omega_M(\varphi_{+},\psi_{+})=-\frac{2i}{a}\,.
\end{equation}
\end{subequations}
Thus, the Gram matrix (with respect to the hermitian form \(\Omega_M\))
 of the vectors \(\varphi_{-}\), \(\psi_{-}\),
 \(\varphi_{+}\), \(\psi_{+}\) is:
 \begin{equation}
 \label{GrMa}
 \frac{a}{2}  \cdot%
 \begin{bmatrix}
\Omega_M(\varphi_{-},\varphi_{-})&\Omega_M(\varphi_{-},\psi_{-})%
&\Omega_M(\varphi_{-},\varphi_{+})&\Omega_M(\varphi_{-},\psi_{+})\\
\Omega_M(\psi_{-},\varphi_{-})&\Omega_M(\psi_{-},\psi_{-})%
&\Omega_M(\psi_{-},\varphi_{+})&\Omega_M(\psi_{-},\psi_{+})\\
\Omega_M(\varphi_{+},\varphi_{-})&\Omega_M(\varphi_{+},\psi_{-})%
&\Omega_M(\varphi_{+},\varphi_{+})&\Omega_M(\varphi_{+},\psi_{+})\\
\Omega_M(\psi_{+},\varphi_{-})&\Omega_M(\psi_{+},\psi_{-})%
&\Omega_M(\psi_{+},\varphi_{+})&\Omega_M(\psi_{+},\psi_{+})
 \end{bmatrix}=J,
 \end{equation}
 where
 \begin{equation}
 \label{IndMe}
 J=
\begin{bmatrix}
0 &i & 0&0 \\
 -i & 0& 0&0 \\
 0 & 0& 0& i\\
 0& 0& -i&0
\end{bmatrix}\,.
\end{equation}
The rank of the Gram matrix is is equal to the dimension of the space~\(\mathcal{E}_M\):
\begin{equation}
\label{eqr}
\textup{rank}\,J=\dim{}\mathcal{E}_M\,(=4)\,.
\end{equation}
\begin{lem}
\label{GenBS}
The functions \(\varphi_{-}\), \(\psi_{-}\),
 \(\varphi_{+}\), \(\psi_{+}\) generate the boundary space
 \(\mathcal{E}_M\).
 \end{lem}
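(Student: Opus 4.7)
The plan is to deduce linear independence in the factor space $\mathcal{E}_M$ directly from the non-degeneracy of the Gram matrix $J$, and then invoke the dimension count $\dim\mathcal{E}_M=4$ to conclude that the four classes span.

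First I would translate the statement into the concrete claim that the cosets $[\varphi_-],[\psi_-],[\varphi_+],[\psi_+]$ in $\mathcal{E}_M=\mathcal{D}_{\mathcal{M}_{\textup{max}}}/\mathcal{D}_{\mathcal{M}_{\textup{min}}}$ are linearly independent. Suppose a linear combination $h=c_1\varphi_-+c_2\psi_-+c_3\varphi_++c_4\psi_+$ represents the zero class, i.e.\ $h\in\mathcal{D}_{\mathcal{M}_{\textup{min}}}$. By the defining property \eqref{Nuls} of the boundary form (with $A=\mathcal{M}_{\textup{min}}$, $A^{\ast}=\mathcal{M}_{\textup{max}}$), we have $\Omega_M(h,y)=0$ for every $y\in\mathcal{D}_{\mathcal{M}_{\textup{max}}}$. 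Testing successively against $y=\varphi_-,\psi_-,\varphi_+,\psi_+$ yields the system $J\vec c=0$, where $\vec c=(c_1,c_2,c_3,c_4)^{\top}$ and $J$ is the matrix \eqref{IndMe}.

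The second step is to use \eqref{eqr}: since $\operatorname{rank} J=4$, the matrix $J$ is invertible, so $\vec c=0$. Hence the four cosets are linearly independent in $\mathcal{E}_M$. Combined with $\dim\mathcal{E}_M=4$ from \eqref{DimFS}, they form a basis of $\mathcal{E}_M$, which is precisely the assertion of the lemma.

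I do not anticipate a serious obstacle: the entire argument is a one-line consequence of two facts already established, namely the identification \eqref{Nuls} of $\mathcal{D}_{\mathcal{M}_{\textup{min}}}$ with the $\Omega_M$-radical in $\mathcal{D}_{\mathcal{M}_{\textup{max}}}$ (equivalently, the non-degeneracy of $\Omega_M$ on $\mathcal{E}_M$, Lemma \ref{LnDe}) and the computed rank of the Gram matrix $J$. The only thing to be mildly careful about is that $\varphi_\pm,\psi_\pm$ genuinely belong to $\mathcal{D}_{\mathcal{M}_{\textup{max}}}$, which has already been recorded in \eqref{Bel}.
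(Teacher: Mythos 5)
Your argument is correct and is essentially the paper's own proof spelled out: the paper deduces the lemma from \eqref{Bel} together with the equality $\operatorname{rank}J=\dim\mathcal{E}_M=4$ in \eqref{eqr}, and the invertibility of the Gram matrix is exactly what forces the four cosets to be linearly independent, hence a basis. (The only cosmetic point is that the Gram matrix is $\tfrac{2}{a}J$ rather than $J$, and the homogeneous system is $\vec c^{\top}G=0$ rather than $G\vec c=0$; neither affects the conclusion.)
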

 \begin{proof}
 Lemma \eqref{GenBS} is a consequence of \eqref{Bel} and of         the equality \eqref{eqr}.
 \end{proof}
 \begin{lem}
 \label{DDMo}
The domain of definition \(\mathcal{D}_{\mathcal{M}_{\textup{min}}}\) of the minimal
differential operator \(\mathcal{M}_{\textup{min}}\) can be characterized by means
of the conditions:
\begin{multline}
\label{DoDeMiO}
\mathcal{D}_{\mathcal{M}_{\textup{min}}}=
\big\lbrace{}x(t)\in\mathcal{D}_{\mathcal{M}_{\textup{max}}} :
\\
\Omega_M(x,\varphi_{-})=0,\ \ \Omega_M(x,\psi_{-})=0, \ \
\Omega_M(x,\varphi_{+})=0,\ \ \Omega_M(x,\psi_{+})=0
\big\rbrace.
\end{multline}
\end{lem}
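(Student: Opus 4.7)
The plan is to reduce the claim to a soft linear-algebra statement on the four-dimensional boundary space $\mathcal{E}_M$, using only the non-degeneracy of $\Omega_M$ (Lemma~\ref{LnDe}), the spanning property of $\varphi_\pm,\psi_\pm$ (Lemma~\ref{GenBS}), and the identity $(\mathcal{M}_{\textup{max}})^{\ast}=\mathcal{M}_{\textup{min}}$ (Theorem~\ref{MSt}).

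First I would establish the intrinsic characterization
\[
\mathcal{D}_{\mathcal{M}_{\textup{min}}}=\big\{x\in\mathcal{D}_{\mathcal{M}_{\textup{max}}}:\Omega_M(x,y)=0\ \ \forall\,y\in\mathcal{D}_{\mathcal{M}_{\textup{max}}}\big\}.
\]
The inclusion $\subseteq$ is the symmetry of $\mathcal{M}_{\textup{max}}$ on pairs with one argument in $\mathcal{D}_{\mathcal{M}_{\textup{min}}}$, i.e.\ exactly the property \eqref{Nuls} specialized to $A=\mathcal{M}_{\textup{min}}$, $A^{\ast}=\mathcal{M}_{\textup{max}}$. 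For the reverse inclusion, if $\Omega_M(x,y)=0$ for every $y\in\mathcal{D}_{\mathcal{M}_{\textup{max}}}$, then $\langle\mathcal{M}_{\textup{max}}y,x\rangle=\langle y,\mathcal{M}_{\textup{max}}x\rangle$ for all such $y$, which by the definition of the adjoint means $x\in\mathcal{D}_{(\mathcal{M}_{\textup{max}})^{\ast}}=\mathcal{D}_{\mathcal{M}_{\textup{min}}}$ (using \eqref{ATDO}). This is essentially the argument in the proof of Lemma~\ref{LnDe}.

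Next I would pass from "$\Omega_M(x,y)=0$ for all $y\in\mathcal{D}_{\mathcal{M}_{\textup{max}}}$" to the four explicit conditions in~\eqref{DoDeMiO}. By \eqref{Nuls}, $\Omega_M(x,\cdot)$ descends to a linear functional on the factor space $\mathcal{E}_M=\mathcal{D}_{\mathcal{M}_{\textup{max}}}/\mathcal{D}_{\mathcal{M}_{\textup{min}}}$. By Lemma~\ref{GenBS}, the classes of $\varphi_-,\psi_-,\varphi_+,\psi_+$ span $\mathcal{E}_M$; hence every $y\in\mathcal{D}_{\mathcal{M}_{\textup{max}}}$ can be written as
\[
y=c_1\varphi_-+c_2\psi_-+c_3\varphi_++c_4\psi_++z,\qquad z\in\mathcal{D}_{\mathcal{M}_{\textup{min}}},
\]
and sesquilinearity together with $\Omega_M(x,z)=0$ reduces $\Omega_M(x,y)=0$ to the four scalar equations $\Omega_M(x,\varphi_{\pm})=\Omega_M(x,\psi_{\pm})=0$. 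Conversely, these four conditions force $\Omega_M(x,y)=0$ for all $y\in\mathcal{D}_{\mathcal{M}_{\textup{max}}}$, completing the equivalence.

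There is no real obstacle here; the statement is a formal consequence of the duality already developed. The only step that deserves care is the first one, where one must invoke $(\mathcal{M}_{\textup{max}})^{\ast}=\mathcal{M}_{\textup{min}}$ rather than only the easier inclusion $\mathcal{M}_{\textup{min}}\subseteq(\mathcal{M}_{\textup{min}})^{\ast}$; this closedness-type fact is precisely what Theorem~\ref{MSt} supplies.
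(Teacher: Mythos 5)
Your proposal is correct and follows essentially the same route as the paper: the paper's (very terse) proof likewise combines Lemma~\ref{GenBS} to reduce the vanishing of $\Omega_M(x,\cdot)$ to the four conditions, and then invokes Lemma~\ref{LnDe} together with Theorem~\ref{MSt} (i.e.\ $(\mathcal{M}_{\textup{max}})^{\ast}=\mathcal{M}_{\textup{min}}$) for the intrinsic characterization of $\mathcal{D}_{\mathcal{M}_{\textup{min}}}$. Your write-up simply makes explicit the steps the paper leaves to the reader, including the correct emphasis on needing the full adjoint identity rather than only the symmetry inclusion.
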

\begin{proof} According to Lemma \ref{GenBS}, from \eqref{DoDeMiO} it follows that
\(\Omega_M(x,y)=0,\,\forall\,y\in\mathcal{M}_{\textup{max}}\). Now we refer to
Lemma \ref{LnDe} and to Theorem \ref{MSt} taking the operator \(\mathcal{M}_{\textup{min}}\) as the operator \(A\).
\end{proof}
\begin{lem}
\label{OJOrt}%
Let \(\Omega_M\) be a bilinear form in the boundary space \(\mathcal{E}\) defined by
\eqref{LRBFo1}, and \(J\) be the matrix \eqref{IndMe}.

The vector \(x^{1}=\alpha_{-}^{1}\varphi_{-}+\beta_{-}^{1}\psi_{-}+
\alpha_{+}^{1}\varphi_{+}+\beta_{+}^{1}\psi_{+}\in\mathcal{E}_L\) is \(\Omega_M\)~-orthogonal
to the vector \(x^{2}=\alpha_{-}^{2}\varphi_{-}+\beta_{-}^{2}\psi_{-}+
\alpha_{+}^{2}\varphi_{+}+\beta_{+}^{2}\psi_{+}\in\mathcal{E}_L\), that is
\begin{subequations}
\label{Ort}
\begin{equation}%
\label{Ort1}
\Omega_M(x^1,x^2)=0,
\end{equation}
 if and only
if the vector-row \(v_{x^1}=[\alpha_{-}^{1},\beta_{-}^{1},\alpha_{+}^{1},\beta_{+}^{1}]\in\mathcal{V}\)
is \(J\)-orthogonal to the vector-row \(v_{x^2}=[\alpha_{-}^{2},\beta_{-}^{2},\alpha_{+}^{2},\beta_{+}^{2}]\in\mathcal{V}\), that is
\begin{equation}%
\label{Ort2}
v_{x^1}J\,v_{x^2}^{\,\ast}=0\,,
\end{equation}
\end{subequations}
where \(\mathcal{V}\) is the space \(\mathbb{C}^4\) of vector-rows equipped by the standard hermitian metric, and the star \(\ast\) is the Hermitian conjugation.
\end{lem}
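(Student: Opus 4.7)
The plan is to reduce the identity $\Omega_M(x^1,x^2)=0 \Leftrightarrow v_{x^1}Jv_{x^2}^{\ast}=0$ to the Gram matrix computation already carried out in \eqref{CBF}--\eqref{IndMe}. Since $\Omega_M$ was shown in Lemma~\ref{SkLf} to be a sesquilinear form on $\mathcal{D}_{\mathcal{M}_{\textup{max}}}$, it descends to a sesquilinear form on the boundary space $\mathcal{E}_M$; I will take it to be linear in the first argument and conjugate-linear in the second (consistent with the convention \eqref{HerFor1}). Once linearity in each slot is in hand, the value $\Omega_M(x^1,x^2)$ is determined by the $4\times 4$ array of values $\Omega_M(e_i,e_j)$ on the basis $e_1=\varphi_{-},\,e_2=\psi_{-},\,e_3=\varphi_{+},\,e_4=\psi_{+}$.

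First, I would invoke Lemma~\ref{GenBS} to guarantee that every $x\in\mathcal{E}_M$ does admit a unique expansion of the form written in the statement, so that assigning $x\mapsto v_x$ is a well-defined linear bijection between $\mathcal{E}_M$ and $\mathcal{V}=\mathbb{C}^4$. Next, expanding both vectors in the basis and using sesquilinearity of $\Omega_M$, I would write
\begin{equation*}
\Omega_M(x^1,x^2)=\sum_{i,j=1}^{4}\alpha_i^{1}\overline{\alpha_j^{2}}\,\Omega_M(e_i,e_j),
\end{equation*}
where I have relabelled the coordinates $(\alpha_{-}^{k},\beta_{-}^{k},\alpha_{+}^{k},\beta_{+}^{k})=(\alpha_1^{k},\alpha_2^{k},\alpha_3^{k},\alpha_4^{k})$.

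Second, I would substitute the explicit Gram matrix computed in \eqref{GrMa}--\eqref{IndMe}: the matrix with entries $\Omega_M(e_i,e_j)$ equals $\frac{2}{a}J$. Consequently
\begin{equation*}
\Omega_M(x^1,x^2)=\frac{2}{a}\sum_{i,j=1}^{4}\alpha_i^{1}\,J_{ij}\,\overline{\alpha_j^{2}}=\frac{2}{a}\,v_{x^1}J\,v_{x^2}^{\,\ast},
\end{equation*}
since $v_{x^2}^{\,\ast}$ is by definition the column obtained by conjugating and transposing the row $v_{x^2}$. The equivalence \eqref{Ort1}$\Leftrightarrow$\eqref{Ort2} follows immediately, the scalar factor $2/a$ being nonzero.

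There is no real obstacle here; the substance of the lemma is the explicit evaluation of $\Omega_M$ on the four generators, which was already done in \eqref{CBF1}--\eqref{CBF3} using the representation \eqref{LRBFo2} and the compact supports of the cutoff functions $\varphi_{\pm},\psi_{\pm}$ near the opposite endpoint. The only point that deserves a cautionary sentence in the write-up is bookkeeping of the sesquilinear convention (linear in the first slot, antilinear in the second), so that the conjugate that produces $v_{x^2}^{\,\ast}$ lands on the correct coordinates and the matrix $J$ is not transposed inadvertently.
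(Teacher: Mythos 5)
Your argument is correct and is exactly the (implicit) reasoning behind the paper, which states Lemma~\ref{OJOrt} without proof as an immediate consequence of the Gram matrix identity \eqref{GrMa} together with sesquilinearity of \(\Omega_M\) (linear in the first slot, antilinear in the second) and the fact that \(\varphi_{-},\psi_{-},\varphi_{+},\psi_{+}\) form a basis of the four-dimensional space \(\mathcal{E}_M\) by Lemma~\ref{GenBS} and \eqref{eqr}. Your cautionary remark about the conjugation landing on the second argument is well placed, and the harmless factor \(2/a\) is handled correctly.
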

Thus, the problem of description of self-adjoint extensions of the operator
\(\mathcal{M}_{\textup{min}}\) is equivalent to the problem of description
of \(\Omega_M\)-self-orthogonal\,%
\footnote{%
As soon as the notion of \(J\)-orthogonality of two vectors is introduced,
\eqref{Ort2}, the notions of \(J\)-orthogonal complement and \(J\)-self-orthogonal
subspaces can be introduced as well.
} %
subspaces in \(\mathcal{E}\), which in its turn is
equivalent to the problem of description of \(J\)-self-orthogonal subspaces in
\(\mathbb{C}^4\). The last problem is a problem of the indefinite linear algebra
and admits an explicit  solutions. We set
\begin{subequations}%
\label{Jpro}
\begin{equation}
\label{Jpro1}
P_{+}=\frac{1}{2}(I+J),\quad P_{-}=\frac{1}{2}(I-J)\,,
\end{equation}
More explicitly,
\begin{equation}
\label{Jpro2}
P_{+}=\frac{1}{2}
\begin{bmatrix}
1 &i & 0&0 \\
 -i & 1& 0&0 \\
 0 & 0& 1& i\\
 0& 0& -i&1
\end{bmatrix}\,,\quad
P_{-}=\frac{1}{2}
\begin{bmatrix}
1 &-i & 0&0 \\
 i & 1& 0&0 \\
 0 & 0& 1& -i\\
 0& 0& i&1
\end{bmatrix}\,.
\end{equation}
\end{subequations}
The matrix \(J\), \eqref{IndMe}, possesses the properties
\begin{equation*}
J=J^{\ast},\quad J^2=I.
\end{equation*}
Therefore the matrices \(P_{+},\ P_{-}\), \eqref{Jpro1},  possess  the properties
\begin{align}
\label{PrPr}
P_{+}^2=P_{+},\quad P_{-}^2&=P_{-}, \quad P_{+}=P_{+}^{\,\ast},\quad
\quad P_{-}=P_{-}^{\,\ast},\\
  P_{+}P_{-}&=0, \quad P_{+}+P_{-}=I\,.
\end{align}
In other words, the matrices \(P_{+},\ P_{-}\) are orthogonal projector matrices.
These matrices project the space \(\mathcal{V}\) onto subspaces
\(\mathcal{V}_{+}\) and \(\mathcal{V}_{-}\):
\begin{equation}
\label{PMSu}%
\mathcal{V}_{+}=\mathcal{V}P_{+},\ \mathcal{V}_{-}=\mathcal{V}P_{-}\,.
\end{equation}
These  subspaces are orthogonally complementary:
\begin{equation}
\label{PMSu1}%
\mathcal{V}_{+}\oplus\mathcal{V}_{-}=\mathcal{V}\,.
\end{equation}
The vector rows
\begin{subequations}
\label{bas}
\begin{alignat}{2}
\label{bas1}
e^1_{+}&=[1,\phantom{-}i,0,0],\quad& e^2_{+}&=[0,0,1,\phantom{-}i]\\
\intertext{and}
\label{bas2}
e^1_{-}&=[1,-i,0,0],\quad & e^2_{-}&=[0,0,1,-i]%
\end{alignat}
\end{subequations}
form orthogonal\,%
\footnote{\,%
 In the standard scalar product on
\(\mathcal{V}=\mathbb{C}^4\).} %
 bases in \(\mathcal{V}_{+}\) and \(\mathcal{V}_{-}\) respectively.

 It turns out that \(J\)-self-orthogonal subspaces of the space \(\mathcal{V}\)
are in one-to-one correspondence with unitary operators acting from
\(\mathcal{V}_{+}\) onto \(\mathcal{V}_{-}\).
\begin{defn}
Let \(U\) be an unitary operator  acting from \(\mathcal{V}_{+}\) onto
\(\mathcal{V}_{-}\). As the vector-row \(v\) runs  over the whole subspace
\(\mathcal{V}_{+}\), the vector \(v+vU\) runs over a subspace
of the space \(\mathcal{V}\). This subspace is denoted by \(\mathcal{S}_U\):
\begin{equation}
\mathcal{S}_U=\big\lbrace{}v+vU\big\rbrace,\quad\textup{where }v%
\textup{ runs over the whole }\mathcal{V}_{+}\,.
\end{equation}
\end{defn}
\newpage
\begin{lem}{\ }\\
\label{DJSO}
\begin{enumerate}
\item
Let \(U\) be an unitary operator  acting from \(\mathcal{V}_{+}\) onto
\(\mathcal{V}_{-}\).
Then the subspace \(\mathcal{S}_U\)  is \(J\)-self-orthogonal, that is
\[\mathcal{S}_U=\mathcal{S}_U^{\bot_J}\,.\]
\item
 Every \(J\)-self-orthogonal subspace \(\mathcal{S}\)
of the space \(\mathcal{V}\) is of the form \(\mathcal{S}_U\):
\[\mathcal{S}=\mathcal{S}_U\]
for some unitary operator \(U:\,\mathcal{V}_{+}\to\mathcal{V}_{-}\).
\item
 The correspondence between \(J\)-self-orthogonal subspaces and
unitary operators acting from \(\mathcal{V}_{+}\) onto
\(\mathcal{V}_{-}\) is one-to-one;
\[(U_1=U_2)\Leftrightarrow{}(\mathcal{S}_{U_1}=\mathcal{S}_{U_2})\,.\]
\end{enumerate}
\end{lem}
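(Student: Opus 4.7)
The plan is to exploit the $J$-signature. Since $J^{\ast}=J$ and $J^{2}=I$, the matrix $J$ has eigenvalues $\pm 1$ with $\mathcal{V}_{\pm}$ as the corresponding eigenspaces, each of dimension~$2$. Writing the $J$-form as $\langle v,w\rangle_{J}=vJw^{\ast}$, this means the restriction of $\langle\,.\,,\,.\,\rangle_{J}$ to $\mathcal{V}_{+}$ coincides with the standard positive definite form, while its restriction to $\mathcal{V}_{-}$ is the negative of the standard form; moreover $\mathcal{V}_{+}$ and $\mathcal{V}_{-}$ are $J$-orthogonal (they are standard-orthogonal as eigenspaces of the Hermitian matrix $J$ for distinct eigenvalues, and the $J$-form is just $\pm$ the standard form across them). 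Since $J$ is non-degenerate, any $J$-self-orthogonal subspace $\mathcal{S}$ automatically satisfies $\dim\mathcal{S}=\tfrac{1}{2}\dim\mathcal{V}=2$.

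For part~(1), I would compute directly. Take $x^{1}=v_{1}+v_{1}U$ and $x^{2}=v_{2}+v_{2}U$ with $v_{1},v_{2}\in\mathcal{V}_{+}$. By the mutual $J$-orthogonality of $\mathcal{V}_{+}$ and $\mathcal{V}_{-}$,
\begin{equation*}
\langle x^{1},x^{2}\rangle_{J}=\langle v_{1},v_{2}\rangle_{J}+\langle v_{1}U,v_{2}U\rangle_{J}=v_{1}v_{2}^{\ast}-(v_{1}U)(v_{2}U)^{\ast}=v_{1}(I-UU^{\ast})v_{2}^{\ast}=0,
\end{equation*}
since $U$ is unitary. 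Thus $\mathcal{S}_{U}\subseteq\mathcal{S}_{U}^{\bot_{J}}$. Because the map $v\mapsto v+vU$ is injective, $\dim\mathcal{S}_{U}=2$, and by the dimension remark above equality holds.

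For part~(2), let $\mathcal{S}$ be $J$-self-orthogonal. First I would show $\mathcal{S}\cap\mathcal{V}_{-}=\{0\}$: any $v$ in this intersection satisfies $\langle v,v\rangle_{J}=0$ (by self-orthogonality) and $\langle v,v\rangle_{J}=-\|v\|^{2}$ (by negative definiteness on $\mathcal{V}_{-}$), forcing $v=0$. Hence the standard orthogonal projection $P_{+}\colon\mathcal{V}\to\mathcal{V}_{+}$ restricts to an injection $\mathcal{S}\to\mathcal{V}_{+}$, and since both spaces are $2$-dimensional it is a bijection. Define $U\colon\mathcal{V}_{+}\to\mathcal{V}_{-}$ by declaring $Uv$ to be the unique element of $\mathcal{V}_{-}$ with $v+Uv\in\mathcal{S}$. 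Linearity of $U$ is inherited from the linearity of $\mathcal{S}$ and of $P_{+}^{-1}|_{\mathcal{S}}$. The self-orthogonality relation $\langle v_{1}+Uv_{1},v_{2}+Uv_{2}\rangle_{J}=0$ reduces, exactly as in the computation above, to $v_{1}v_{2}^{\ast}=(Uv_{1})(Uv_{2})^{\ast}$, i.e.\ $U$ preserves the standard inner product, so it is an isometry between two-dimensional spaces and hence unitary.

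Part~(3) is immediate from the construction: the operator $U$ recovered from $\mathcal{S}=\mathcal{S}_{U}$ is forced to send $v\in\mathcal{V}_{+}$ to the unique $\mathcal{V}_{-}$-component attached to $v$ inside $\mathcal{S}$, so $\mathcal{S}_{U_{1}}=\mathcal{S}_{U_{2}}$ forces $U_{1}v=U_{2}v$ for every $v\in\mathcal{V}_{+}$. No single step is a genuine obstacle; the only point requiring care is keeping the bookkeeping of the row-vector convention and the sign from $J|_{\mathcal{V}_{-}}=-I$ consistent, but this is a routine verification on the explicit bases \eqref{bas1}, \eqref{bas2}.
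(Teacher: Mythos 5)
Your proposal is correct and follows essentially the same route as the paper's proof: the decomposition $\mathcal{V}=\mathcal{V}_{+}\oplus\mathcal{V}_{-}$ with the $J$-form equal to $\pm$ the standard form on the two summands, the dimension count $\dim\mathcal{S}^{\bot_J}=\dim\mathcal{V}-\dim\mathcal{S}$ from non-degeneracy, and the realization of a self-orthogonal subspace as the graph of an isometry $\mathcal{V}_{+}\to\mathcal{V}_{-}$. The only (immaterial) differences are that you phrase the injectivity of $P_{+}|_{\mathcal{S}}$ via $\mathcal{S}\cap\mathcal{V}_{-}=\{0\}$ and deduce unitarity from the polarized identity $v_{1}v_{2}^{\ast}=(Uv_{1})(Uv_{2})^{\ast}$, whereas the paper uses the diagonal identity $v_{1}v_{1}^{\ast}=v_{2}v_{2}^{\ast}$.
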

\begin{proof}{\ }\\
\textbf{1}.\,The mapping \(v\to{}v+Uv\) is one-to-one mapping from \(\mathcal{V}_{+}\)
onto \(\mathcal{S}_U\). Indeed, this mapping is surjective by definition of
the subspace \(\mathcal{S}_U\). This mapping is also injective.
 The equality \(v+Uv=0\) implies that \(v=Uv=0\) since\,%
 \footnote{%
 Recall that
 \(v\in\mathcal{V}_{+},\,Uv\in\mathcal{V}_{-}\), %
and \(\mathcal{V}_{+}\bot \mathcal{V}_{-}\).
 } %
  \(v\bot\,Uv\). In particular, \(\dim\mathcal{S}_U=\dim\mathcal{V}_{+}\,(=2)\).

If \(v_1\) and \(v_2\) are two arbitrary vectors from \(\mathcal{V}_{+}\),
then the vectors \(w_1=v_1+v_1U\) and \(w_2=v_2+v_2U\) are \(J\)-orthogonal:
\(w_1Jw_2^{\ast}=0\). Indeed, since \(J=P_{+}-P_{-}\) and %
\(v_k=v_kP_{+}, v_kU=v_kUP_{-}\,,k=1,2\), then, using the properties \eqref{PrPr}
of \(P_{+}\) and  \(P_{-}\), we obtain
\begin{align*}%
w_1Jw_2^{\ast}=(v^1P_{+}&+v^1UP_{-})(P_{+}-P_{-})
(P_{+}^\ast{}v_2^\ast+P_{-}^\ast{}U^\ast{}v_2^\ast)=\\
&=v_1v_2^\ast{}-v_1UU^\ast{}v_2^\ast\,.
\end{align*}
Since the unitary operator \(U\) preserves the scalar product, then
\(v_1v_2^{\ast}=v_1UU^\ast{}v_2^{\ast}\), hence \(w_1Jw_2^{\ast}=0\).
Thus, \(\mathcal{S}_U\subseteq(\mathcal{S}_U)^{\bot_J}\).
(The symbol \(\bot_J\) means \(J\)-orthogonal complement.)
Since the Hermitian form \((v_1,v_2)\to{}v_1Jv_2^\ast\) is non-degenerate
on \(\mathcal{V}\), then \(\dim(\mathcal{S}_U^{\bot_J})=
\dim{\mathcal{V}}-\dim{\mathcal{S}_U}\). Because %
\(\dim{\mathcal{V}}-\dim{\mathcal{S}_U}=\dim{\mathcal{S}_U}\), we have
\(\dim{\mathcal{S}_U}=\dim(\mathcal{S}_U^{\bot_J})\). Hence,
\(\mathcal{S}_U=(\mathcal{S}_U)^{\bot_J}\), i.e. the subspace \(\mathcal{S}_U\)
is \(J\)-self-orthogonal.\\
\textbf{2}.\,Let \(\mathcal{S}\) be a \(J\)-self-orthogonal subspace.
If \[v\in\mathcal{S},\,v=v_{1}+v_{2},\,v_{1}\in\mathcal{V}_{+}\,,\,v_{2}\in\mathcal{V}_{-},\]
then the condition \(v{\bot_J}v=0\), that is the condition \(vJv^\ast=0\)
means that \(v_1v_1^\ast=v_2v_2^\ast\). Therefore, if \(v_1=0\), then also
\(v=0\). This means that the projection mapping \(v\to{}vP_{+}\), considered
as a mapping from \(\mathcal{S}\to\mathcal{V}_{+}\), is injective.
For \(J\)-self-orthogonal subspace \(\mathcal{S}\) of the space \(\mathcal{V}\),
the equality \(\dim\mathcal{S}=\dim\mathcal{V}-\dim\mathcal{S}\) holds.
Hence \(\dim\mathcal{S}=\dim\mathcal{V}_{+}\). Therefore, the injective linear
mapping \(v\to{}P_{+}\) is surjective. The inverse mapping
is defined on the whole subspace \(\mathcal{V}_{+}\) and can by presented
in the form \(v=v_1+v_1U\), where \(U\) is a linear operator acting
from \(\mathcal{V}_{+}\) into \(\mathcal{V}_{-}\). This  mapping \(v_1\to{}v_1+v_1U\)
maps the subspace \(\mathcal{V}_{+}\) onto the subspace \(\mathcal{S}\).

 Since \(vJv^\ast=0\),
then \(v_1v_1^\ast=v_2v_2^\ast\), where \(v_2=v_1U\). Since \(v_1\in\mathcal{V}_{+}\)
is arbitrary, this means that the operator \(U\) is isometric. Since
\(\dim{}\mathcal{V}_{+}=\dim{}\mathcal{V}_{-}\), the operator \(U\) is unitary.
Thus, the originally given \(J\)-self-orthogonal subspace \(\mathcal{S}\)
is of the form \(\mathcal{S}_U\), where \(U\) is an unitary operator acting
from \(\mathcal{V}_{+}\) to \(\mathcal{V}_{-}\).\\
\textbf{3}.
The coincidence \(\mathcal{S}_{U_1}=\mathcal{S}_{U_2}\) means that
every vector of the form \(v_1+v_1U_1\), where \(v_1\in\mathcal{V}_{+}\)
can also be presented in the form \(v_2+v_2U_2\) with some \(v_2\in\mathcal{V}_{+}\):
\[v_1+v_1U_1=v_2+v_2U_2\,.\]
Since \(v_1,\,v_2\in\mathcal{V}_{+},\ v_1U_1,\,v_1U_2\in\mathcal{V}_{-}\),
then \(v_1=v_2\), and \(v_1U_1=v_1U_2\). The equality \(v_1U_1=v_1U_2\)
for every \(v_1\in\mathcal{V}_{+}\) means that \(U_1=U_2\).
Thus, \((\mathcal{S}_{U_1}=\mathcal{S}_{U_2})\Rightarrow(U_1=U_2)\).
\end{proof}
Choosing the orthogonal bases \eqref{bas} in the subspaces \(\mathcal{V}_{+}\)
and \(\mathcal{V}_{+}\), we represent an unitary operator \(U\) by the appropriate
unitary matrix:
\begin{alignat*}{2}
e^1_{+}U&=&\ e^1_{-}u_{11}&+e^2_{-}u_{21},\\
e^2_{+}U&=&\ e^1_{-}u_{12}&+e^2_{-}u_{22}.
\end{alignat*}
The
following result is a  reformulation of Lemma \ref{DJSO}:
\begin{lem}
\label{BSOS}%
Let \(\mathcal{V}\) be the space \(\mathbb{C}^4\) of \(four\) vector-rows,
\(J\) be a matrix of the form \eqref{IndMe}. With every \(2\times2\) matrix %
\( U=\|u_{pq}\|_{1\leq{}p,q\leq{}2}\), we
associate the pair of vectors \(v^1(U),\,v^2(U)\):
\begin{subequations}
\label{BSOSu}
\begin{alignat}{2}
\label{BSOSu1}
v^1(U)&=e^1_{+}+&\ e^1_{-}u_{11}&+e^2_{-}u_{21},\\
\label{BSOSu2}
v^2(U)&=e^2_{+}+&\ e^1_{-}u_{12}&+e^2_{-}u_{22},
\end{alignat}
\end{subequations}
where \(e^{\,k}_{\pm},\,k=1,2,\)
are the vector-rows of the form \eqref{bas},
and the subspace \(\mathcal{S}_U\) of \(\mathcal{V}\) is the linear
hull of the vectors \(v^1(U),\,v^2(U)\),
\begin{equation*}
\mathcal{S}_U=\textup{hull}(v^1(U),\,v^2(U))\,.
\end{equation*}
\begin{enumerate}
\item
If the matrix \(U\) is unitary, then the vectors
\(v^1(U),\,v^2(U)\)
are linearly independent, and the subspace  \(\mathcal{S}_U\)
is \(J\)-self-orthogonal.
\item
Let \(\mathcal{S}\) be a \(J\)-self-orthogonal subspace
of the space \(\mathcal{V}\). Then \(\mathcal{S}=\mathcal{S}_U\)
for some  an unitary matrix \(U\).
\item
For  unitary matrices \(U_1,\,U_2\),
\begin{equation*}
(\mathcal{S}_{U_1}=\mathcal{S}_{U_2})\Leftrightarrow(U_1=U_2)\,.
\end{equation*}
\end{enumerate}
\end{lem}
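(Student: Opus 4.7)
The plan is to recognize Lemma~\ref{BSOS} as the coordinate version of Lemma~\ref{DJSO} and reduce it to that already-proved statement. So the whole proof will consist of setting up the correct dictionary between $2\times 2$ matrices and linear maps $\mathcal{V}_{+}\to\mathcal{V}_{-}$, and then quoting Lemma~\ref{DJSO} three times.

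First I would interpret the $2\times 2$ matrix $U=\|u_{pq}\|$ as the matrix, in the bases $\{e^{1}_{+},e^{2}_{+}\}$ and $\{e^{1}_{-},e^{2}_{-}\}$, of a linear map $\widetilde U\colon\mathcal{V}_{+}\to\mathcal{V}_{-}$, so that $e^{k}_{+}\widetilde U$ is precisely the part of the right-hand side of \eqref{BSOSu1}--\eqref{BSOSu2} that lies in $\mathcal{V}_{-}$. Then $v^{k}(U)=e^{k}_{+}+e^{k}_{+}\widetilde U$ for $k=1,2$, and hence
\begin{equation*}
\mathcal{S}_{U}=\textup{hull}\bigl(v^{1}(U),v^{2}(U)\bigr)=\{\,v+v\widetilde U:\,v\in\mathcal{V}_{+}\,\},
\end{equation*}
which is exactly the subspace attached to $\widetilde U$ in Lemma~\ref{DJSO}. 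Since the vectors $e^{1}_{\pm},e^{2}_{\pm}$ of \eqref{bas} are pairwise orthogonal in $\mathbb{C}^{4}$ with common norm $\sqrt{2}$, rescaling by $1/\sqrt{2}$ furnishes orthonormal bases of $\mathcal{V}_{\pm}$ in which the matrix of $\widetilde U$ is still $U$; consequently $U$ is unitary as a $2\times 2$ matrix if and only if $\widetilde U$ is unitary as an operator from $\mathcal{V}_{+}$ onto $\mathcal{V}_{-}$.

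With this dictionary in place, the three conclusions of Lemma~\ref{BSOS} follow from the corresponding items of Lemma~\ref{DJSO}. Item~(1) of Lemma~\ref{DJSO} yields the $J$-self-orthogonality of $\mathcal{S}_{U}$; the linear independence of $v^{1}(U),v^{2}(U)$ is then a consequence of the injectivity of $v\mapsto v+v\widetilde U$ on $\mathcal{V}_{+}$ (already established inside the proof of Lemma~\ref{DJSO} via $v+v\widetilde U=0$, $v\in\mathcal{V}_{+}$, $v\widetilde U\in\mathcal{V}_{-}$, $\mathcal{V}_{+}\perp\mathcal{V}_{-}$), combined with the linear independence of $e^{1}_{+},e^{2}_{+}$. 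Item~(2) furnishes, for any $J$-self-orthogonal $\mathcal{S}\subseteq\mathcal{V}$, a unitary operator $\widetilde U\colon\mathcal{V}_{+}\to\mathcal{V}_{-}$ with $\mathcal{S}=\mathcal{S}_{\widetilde U}$; its matrix in the bases \eqref{bas} is the desired $U$. Item~(3) gives directly the injectivity of $U\mapsto\mathcal{S}_{U}$ at the matrix level.

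I do not expect any substantive obstacle; the only point requiring care is the normalization bookkeeping above, ensuring that ``unitary matrix'' and ``unitary operator'' coincide despite the bases \eqref{bas} not being orthonormal. Once this is noted, the lemma is a pure restatement of Lemma~\ref{DJSO} in the chosen coordinates and no further calculation is needed.
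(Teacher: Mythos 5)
Your proposal is correct and coincides with the paper's route: the paper gives no separate proof of Lemma~\ref{BSOS}, but simply declares it ``a reformulation of Lemma~\ref{DJSO}'' after representing a unitary operator $U\colon\mathcal{V}_{+}\to\mathcal{V}_{-}$ by its matrix in the orthogonal bases \eqref{bas}, which is exactly the dictionary you set up. Your explicit check that the common norm $\sqrt{2}$ of the basis vectors makes ``unitary matrix'' and ``unitary operator'' agree is a detail the paper leaves implicit, and it is handled correctly.
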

The "coordinate" form of the  vectors \(v^1(U),\,v^2(U)\) is:
\begin{alignat}{5}
\label{Coorv}
v^1(U)=%
&\big[&1+u_{11}&,&\,\,\,i(1-u_{11})&,&\,u_{21}\,\,\,\,&,&-iu_{21}\,\,\,\, &\big],
\notag\\
\raisetag{100pt}
v^2(U)=%
&\big[&u_{12}\,\,\,\,&,&\,-iu_{12}\,\,\,\,&,&\,\,1+u_{22}&,&\,\,i(1-u_{22})&\big].
\raisetag{100pt}
\end{alignat}

Taking in account Lemma \ref{OJOrt}, we formulate the following result
\begin{lem}%
\label{ESoS}%
Let us associate
 the pair of vectors \(d^1(U),\,d^2(U)\in\mathcal{E}_M\)
 with every \(2\times2\) matrix \( U=\|u_{pq}\|_{1\leq{}p,q\leq{}2}\):
 \begin{subequations}%
 \label{VGeE}%
 \begin{align}%
 \label{VGeE1}%
 d^1(U)=&\,(1+u_{11})\varphi_{-}+i(1-u_{11})\psi_{-}+u_{21}\varphi_{+}-iu_{21}\psi_{+}\,,\\
 \label{VGeE2}%
 d^2(U)=&\,u_{12}\varphi_{-}-iu_{12}\psi_{-}+(1+u_{22})\varphi_{+}+i(1-u_{22}\psi_{+}\,,
 \end{align}%
 \end{subequations}%
 where the functions \(\varphi_{\pm},\,\psi_{\pm}\) are defined in \eqref{SpBa}.
The subspace \(\mathcal{G}_U\) of the space \(\mathcal{E}_M\) is defined as
the linear hull of the vectors \(d^1(U),\,d^2(U)\):
\begin{equation}
\label{SSU}
\mathcal{G}_U=\textup{hull}\,(d^1(U),\,d^2(U))\,.
\end{equation}
\begin{enumerate}
\item
If the matrix \(U\) is unitary, then  the subspace  \(\mathcal{S}=\mathcal{G}_U\)
is \(\Omega_M\)-self-orthogonal.
\item
Let \(\mathcal{S}\) be a \(\Omega_M\)-self-orthogonal subspace
of the space \(\mathcal{E}_M\). Then \(\mathcal{S}=\mathcal{G}_U\)
for some  an unitary matrix \(U\).
\item
For  unitary matrices \(U_1,\,U_2\),
\begin{equation*}
(\mathcal{G}_{U_1}=\mathcal{G}_{U_2})\Leftrightarrow(U_1=U_2)\,.
\end{equation*}
\end{enumerate}
\end{lem}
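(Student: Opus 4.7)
The plan is to reduce Lemma~\ref{ESoS} to its matrix analogue, Lemma~\ref{BSOS}, by identifying \(\mathcal{E}_M\) with the coordinate space \(\mathcal{V}=\mathbb{C}^{4}\) through the quadruple \(\{\varphi_{-},\psi_{-},\varphi_{+},\psi_{+}\}\). First I would verify that this quadruple is in fact a basis, not just a spanning set. Lemma~\ref{GenBS} shows it generates \(\mathcal{E}_M\), and \eqref{DimFS} gives \(\dim\mathcal{E}_M=4\), so it is a basis. I then define the coordinate isomorphism \(T:\mathcal{E}_M\to\mathcal{V}\) by
\[T\bigl(\alpha_{-}\varphi_{-}+\beta_{-}\psi_{-}+\alpha_{+}\varphi_{+}+\beta_{+}\psi_{+}\bigr)=[\alpha_{-},\beta_{-},\alpha_{+},\beta_{+}].\]
Lemma~\ref{OJOrt} says precisely that \(T\) intertwines the form \(\Omega_M\) with the form \((v,w)\mapsto vJw^{\ast}\); hence \(T\) restricts to a bijection between \(\Omega_M\)-self-orthogonal subspaces of \(\mathcal{E}_M\) and \(J\)-self-orthogonal subspaces of \(\mathcal{V}\).

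Next I would identify the images of the distinguished vectors under \(T\). Reading off coefficients in \eqref{VGeE1}, \eqref{VGeE2} gives
\[T(d^{1}(U))=\bigl[1+u_{11},\,i(1-u_{11}),\,u_{21},\,-iu_{21}\bigr]=v^{1}(U),\]
\[T(d^{2}(U))=\bigl[u_{12},\,-iu_{12},\,1+u_{22},\,i(1-u_{22})\bigr]=v^{2}(U),\]
exactly matching the coordinate form \eqref{Coorv}. Consequently \(T(\mathcal{G}_U)=\mathcal{S}_U\) for every \(2\times 2\) matrix \(U\).

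The three assertions now follow by transport of structure. For (1), if \(U\) is unitary, Lemma~\ref{BSOS}(1) says \(\mathcal{S}_U\) is \(J\)-self-orthogonal, so \(\mathcal{G}_U=T^{-1}(\mathcal{S}_U)\) is \(\Omega_M\)-self-orthogonal. For (2), an \(\Omega_M\)-self-orthogonal subspace \(\mathcal{S}\subseteq\mathcal{E}_M\) maps to a \(J\)-self-orthogonal subspace \(T(\mathcal{S})\subseteq\mathcal{V}\); by Lemma~\ref{BSOS}(2) this has the form \(\mathcal{S}_U\) for some unitary \(U\), and pulling back yields \(\mathcal{S}=\mathcal{G}_U\). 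Part (3) is immediate from the injectivity of \(T\) together with Lemma~\ref{BSOS}(3).

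The only real difficulty is bookkeeping: one must be careful to invoke Lemma~\ref{GenBS} (together with the dimension count from \eqref{eqr} and \eqref{DimFS}) to promote \(\{\varphi_{-},\psi_{-},\varphi_{+},\psi_{+}\}\) from a generating set to an honest basis, so that \(T\) is a well-defined linear isomorphism, and then to confirm that the coefficients in \eqref{VGeE} line up termwise with the coordinates in \eqref{Coorv}. Once these two points are secured, the lemma is a purely formal consequence of Lemmas~\ref{OJOrt} and~\ref{BSOS}.
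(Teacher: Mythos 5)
Your proposal is correct and follows exactly the route the paper intends: the paper introduces Lemma \ref{ESoS} with the words ``Taking in account Lemma \ref{OJOrt}, we formulate the following result,'' i.e.\ it treats the lemma as the transport of Lemma \ref{BSOS} through the coordinate identification of \(\mathcal{E}_M\) with \(\mathcal{V}=\mathbb{C}^4\) in the basis \(\{\varphi_{-},\psi_{-},\varphi_{+},\psi_{+}\}\), which is precisely your map \(T\). Your only addition is to make explicit the (correct) observation that Lemma \ref{GenBS} together with \(\dim\mathcal{E}_M=4\) upgrades the generating set to a basis, a point the paper leaves implicit.
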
%
It is clear that a subspace \(\mathcal{S}\subseteq\mathcal{E}_M\) is
an \(\Omega_M\)-self-orthogonal subspace if and only if its \(\Omega_M\)-orthogonal
complement \(\mathcal{S}^{\bot_{\Omega_M}}\) is an \(\Omega_M\)-self-orthogonal subspace.
The subspace \((\mathcal{S}_U)^{\bot_{\Omega_M}}\) can be described as:
\[(\mathcal{S}_U)^{\bot_{\Omega_M}}=\big\lbrace{}x\in\mathcal{E}_M:
\Omega_{M}(x,d^1(U))=0,\,\Omega_{M}(x,d^2(U))=0\big\rbrace\,,\]
where \(d^1,\,d^2\) are defined in \eqref{VGeE}, \eqref{SpBa}.
Thus Lemma \ref{ESoS} can be reformulated in the following way:%
\begin{lem}
\label{ODSS}
Let us associate
the pair of vectors \(d^1(U),\,d^2(U)\)
with every \(2\times2\) matrix \( U=\|u_{pq}\|_{1\leq{}p,q\leq{}2}\)
by \eqref{VGeE}, \eqref{SpBa}.
The subspace  \(\mathcal{O}_U\) is defined as
\begin{equation}%
\label{OODop}%
\mathcal{O}_U=\big\lbrace{}x\in\mathcal{E}_L:
\,\Omega_{M}(x,d^1(U))=0,\,\Omega_{M}(x,d^2(U))=0\big\rbrace\,.
\end{equation}%
\begin{enumerate}
\item
If the matrix \(U\) is unitary, then  the subspace  \(S=\mathcal{O}_U\)
is \(\Omega_M\)-self-orthogonal.
\item
Let \(\mathcal{S}\) be a \(\Omega_M\)-self-orthogonal subspace
of the space \(\mathcal{E}_M\). Then \(\mathcal{S}=\mathcal{O}_U\)
for some  an unitary matrix \(U\).
\item
For  unitary matrices \(U_1,\,U_2\),
\begin{equation*}
(\mathcal{O}_{U_1}=\mathcal{O}_{U_2})\Leftrightarrow(U_1=U_2)\,.
\end{equation*}
\end{enumerate}
 \end{lem}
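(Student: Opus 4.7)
The plan is to observe that the subspace $\mathcal{O}_U$ defined in \eqref{OODop} is nothing but the $\Omega_M$-orthogonal complement of the subspace $\mathcal{G}_U$ constructed in Lemma \ref{ESoS}, and then to transport each of the three conclusions of Lemma \ref{ESoS} across this complementation.

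First I would spell out the identification $\mathcal{O}_U=(\mathcal{G}_U)^{\bot_{\Omega_M}}$. Since $\mathcal{G}_U$ is by \eqref{SSU} the linear hull of the two generators $d^1(U)$ and $d^2(U)$, and $\Omega_M$ is sesquilinear in its first argument, the pair of conditions in \eqref{OODop} is equivalent to $\Omega_M(x,y)=0$ for every $y\in\mathcal{G}_U$. So indeed $\mathcal{O}_U=(\mathcal{G}_U)^{\bot_{\Omega_M}}$. I would then invoke the general fact that, because $\Omega_M$ is non-degenerate on $\mathcal{E}_M$ (this is Lemma \ref{LnDe} applied to $A=\mathcal{M}_{\textup{min}}$), the operation $\mathcal{S}\mapsto\mathcal{S}^{\bot_{\Omega_M}}$ is an involution on the lattice of subspaces of $\mathcal{E}_M$. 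In particular $\mathcal{S}$ is $\Omega_M$-self-orthogonal if and only if $\mathcal{S}^{\bot_{\Omega_M}}$ is, in which case the two subspaces coincide.

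With these two observations in place the three claims follow directly. For part (1), if $U$ is unitary then Lemma \ref{ESoS}(1) gives that $\mathcal{G}_U$ is $\Omega_M$-self-orthogonal, so $\mathcal{O}_U=(\mathcal{G}_U)^{\bot_{\Omega_M}}=\mathcal{G}_U$ is self-orthogonal as well. For part (2), given a $\Omega_M$-self-orthogonal subspace $\mathcal{S}$, Lemma \ref{ESoS}(2) produces a unitary $U$ with $\mathcal{S}=\mathcal{G}_U$; then $\mathcal{S}=\mathcal{S}^{\bot_{\Omega_M}}=(\mathcal{G}_U)^{\bot_{\Omega_M}}=\mathcal{O}_U$. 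For part (3), the implication $\mathcal{O}_{U_1}=\mathcal{O}_{U_2}\Rightarrow U_1=U_2$ is inherited from the corresponding uniqueness in Lemma \ref{ESoS}(3) via the identification $\mathcal{O}_U=\mathcal{G}_U$ (valid for unitary $U$), while the reverse implication is trivial.

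The main obstacle, such as it is, lies in the preliminary step: one must verify that passage to the $\Omega_M$-orthogonal complement is a genuine involution on the subspaces of interest, so that the self-orthogonality of $\mathcal{G}_U$ really does translate, without cancellation, to that of $\mathcal{O}_U$. This reduces to the non-degeneracy of $\Omega_M$ together with the dimension count $\dim\mathcal{G}_U=2=\tfrac{1}{2}\dim\mathcal{E}_M$, both of which are already at our disposal (cf.\ \eqref{DimFS} and the Gram matrix computation \eqref{GrMa}, whose rank equals $4$). Once this is in hand, Lemma \ref{ODSS} is essentially a notational restatement of Lemma \ref{ESoS}.
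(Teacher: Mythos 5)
Your proposal is correct and follows essentially the same route as the paper: the text preceding the lemma identifies \(\mathcal{O}_U\) as the \(\Omega_M\)-orthogonal complement of \(\mathcal{G}_U\) and observes that a subspace is \(\Omega_M\)-self-orthogonal if and only if its complement is (indeed, a self-orthogonal subspace \emph{equals} its complement by \eqref{SeOrSu}), so that Lemma \ref{ODSS} is a reformulation of Lemma \ref{ESoS}. Your additional remark about the complementation being an involution, backed by the non-degeneracy of \(\Omega_M\) and the dimension count, is a harmless elaboration of the same idea.
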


 Thus there is one-to-one correspondence between the set of all \(2\times2\) unitary
 matrices \(U=\|u_{pq}\|_{1\leq{}p,q\leq{}2}\) and the set of all \(\Omega_M\)-self-orthogonal
 subspaces \(\mathcal{S}\) of the space \(\mathcal{E}_M=
\mathcal{D}_{\mathcal{M}_{\textup{max}}}\big/%
\mathcal{D}_{\mathcal{M}_{\textup{min}}}\). This correspondence is described as
\begin{equation}%
\label{Corre}%
\mathcal{S}=\mathcal{O}_U,
\end{equation}%
where \(\mathcal{O}_U\) is defined in \eqref{OODop}, \eqref{VGeE}, \eqref{SpBa}.

On the other hand, the subspaces of the space  \(\mathcal{E}_M=
\mathcal{D}_{\mathcal{M}_{\textup{max}}}\big/%
\mathcal{D}_{\mathcal{M}_{\textup{min}}}\)
which are self-orthogonal with respect to the Hermitian form \(\Omega_M\), \eqref{LRBFo1},
are in one-to-one correspondence to self-adjoint differential  operators
generated by the formal differential operator \(M\), \eqref{DiffExp}.
Every self-adjoint differential operators \(\mathcal{M}\)
generated by the formal differential operator \(M\) is the \emph{restriction} of the
maximal differential operator   \(\mathcal{M}_{\textup{max}}\), \eqref{MaxDO},
on the appropriate domain of definition. According to Lemma~\ref{CrSel},
as applied to the operators \(A=\mathcal{M}_{\textup{min}},\,A^{\ast}=\mathcal{M}_{\textup{max}}\),
the domains of definition of a self-adjoint extension \(\mathcal{S}\) of
the operator \(\mathcal{M}_{\textup{min}}\) are those subspaces \(\mathcal{S}\):
\begin{equation}
\mathcal{D}_{\mathcal{M}_{\textup{min}}}\subseteq\mathcal{S}%
\subseteq\mathcal{D}_{\mathcal{M}_{\textup{max}}}
\end{equation}
which are self-orthogonal with respect to the Hermitian form \(\Omega_M\),
\eqref{LRBFo}. According to Lemma \ref{ODSS}, \(\Omega_M\)-self-orthogonal
subspaces \(\mathcal{S}\) can be described by means of the conditions
\begin{equation}
\mathcal{S}=\big\lbrace{}x(t)\in\mathcal{D}_{\mathcal{M}_{\textup{max}}}:\,\,%
\Omega_M(x,d^1(U))=0,\,\,\Omega_M(x,d^2(U))=0\big\rbrace\,,
\end{equation}
where \(d^1(U),\,d^2(U)\) are the same that in \eqref{VGeE},\,\eqref{SpBa},
\(U\) is an unitary \(2\times2\) matrix.

\section{Description of the selfadoint extensions \(\boldsymbol{\mathcal{M}_U}\) in terms of the end point linear forms.}
The conditions \(\Omega_M(x,d^1(U))=0,\,\,\Omega_M(x,d^2(U))=0\)
may be interpreted as a \emph{boundary conditions} posed on functions
\(x\in\mathcal{D}_{\mathcal{M}_{\textup{max}}}\). Let us present these conditions
in more traditional form.
\begin{defn}
\label{DBLF}
For each \emph{fixed} \(y\in\mathcal{D}_{\mathcal{M}_{\textup{max}}}\), the expressions
\([x,y]_{-a}\) and \([x,y]^{a}\), considered as function of \(x\), are linear forms
defined on \(\mathcal{D}_{\mathcal{M}_{\textup{max}}}\). These forms are said to be
\emph{the end point linear forms related to the differential operator~\(M\).}
\end{defn}

In view of \eqref{Bel}, all four endpoint linear forms
\begin{equation}
\label{BoLF}
[x,\varphi_{-}]_{-a},\quad [x,\psi_{-}]_{-a},\quad [x,\varphi_{+}]^{a},\quad [x,\psi_{+}]^{a}
\end{equation}
are well defined for \(x\in\mathcal{D}_{\mathcal{M}_{\textup{max}}}\).
\begin{lem}{\ }\\[-2.0ex]
\label{EEBF}
\begin{enumerate}
\item For every \(x\in\mathcal{D}_{\mathcal{M}_{\textup{max}}}\),
the end point linear forms \([x,\varphi_{-}]_{-a},\, [x,\psi_{-}]_{-a}\),\, \([x,\varphi_{+}]^{a},\, [x,\psi_{+}]^{a}\)
can be expressed as:
\begin{subequations}
\label{ECBF}
\begin{align}
[x,\varphi_{-}]_{-a}=&-\frac{2}{a}b_{-a}(x),\\
[x,\varphi_{+}]^{a}=&\phantom{-\,}\frac{2}{a}b_{a}(x),\\
[x,\psi_{-}]_{-a}=&-\frac{2}{a}c_{-a}(x),\\
[x,\psi_{+}]^{a}=&\phantom{-\,}\frac{2}{a}c_{a}(x),
\end{align}
\end{subequations}
where
\begin{subequations}
\label{GBCo}
\begin{align}%
b_{-a}(x)&=\lim_{t\to{}{-a+0}}(t+a)\frac{dx(t)}{dt},\label{GBCo1}\\
b_{a}(x)&=\lim_{t\to{}{a-0}}(t-a)\frac{dx(t)}{dt},\label{GBCo2}\\
c_{-a}(x)&=\lim_{t\to{}{-a+0}}\bigg((t+a)\ln(a+t)\frac{dx(t)}{dt}-x(t)\bigg),%
\label{GBCo3}\\
c_{a}(x)&=\lim_{t\to{}{a-0}}\bigg((t-a)\ln(a-t)\frac{dx(t)}{dt}-x(t)\bigg)\,.\label{GBCo4}
\end{align}
In particular,  the limits exist in \eqref{GBCo}.
\end{subequations}
\item
The end poins linear forms \([x,\varphi_{-}]^{a},\, [x,\psi_{-}]^{a}\),\, \([x,\varphi_{+}]_{-a},\, [x,\psi_{+}]_{-a}\) vanish identically on \(\mathcal{D}_{\mathcal{M}_{\textup{max}}}\).
\end{enumerate}
\end{lem}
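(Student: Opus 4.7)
The plan is to compute the four endpoint forms in \eqref{ECBF} directly from the definition \eqref{BF}, exploiting the closed-form expressions that $\varphi_{\pm}$ and $\psi_{\pm}$ admit in a one-sided neighbourhood of the relevant endpoint, and to deduce Part (2) from the fact that these same test functions vanish identically (together with all their derivatives) in a neighbourhood of the opposite endpoint. The existence of the scalar limits in \eqref{GBCo} is then extracted by bootstrapping from Lemma \ref{EBV}, which is applicable because $\varphi_{\pm},\psi_{\pm}\in\mathcal{D}_{\mathcal{M}_{\textup{max}}}$ by \eqref{Bel}.

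I would separate the computation by endpoint. Near $t=-a$, factor $p(t)=(a-t)(a+t)/a^{2}$ and substitute $\varphi_{-}(t)=1$, $d\varphi_{-}/dt=0$ into \eqref{BF}, which collapses the bracket to $[x,\varphi_{-}](t)=-p(t)\,dx(t)/dt$. The factor $(a-t)/a^{2}$ tends to $2/a$ as $t\to -a+0$, so the limit $[x,\varphi_{-}]_{-a}$ equals $-\tfrac{2}{a}\,b_{-a}(x)$. For $\psi_{-}$ I substitute $\psi_{-}(t)=\ln(a+t)$ and $d\psi_{-}/dt=1/(a+t)$; one factor of $(a+t)$ from $p(t)$ cancels the $1/(a+t)$ derivative term, and the remainder rearranges as
\[
[x,\psi_{-}](t)=-\frac{a-t}{a^{2}}\left((a+t)\ln(a+t)\frac{dx(t)}{dt}-x(t)\right),
\]
from which $[x,\psi_{-}]_{-a}=-\tfrac{2}{a}\,c_{-a}(x)$ follows on letting $t\to -a+0$. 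The analogous computation at the right endpoint, using $\varphi_{+}(t)=1$ and $\psi_{+}(t)=\ln(a-t)$ on $(a/2,a)$, produces the two formulas with the opposite outer sign because the surviving linear factor of $p(t)$ is now $(a+t)/a^{2}\to 2/a$ rather than $(a-t)/a^{2}$.

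Part (2) is immediate from the cut-off: for each $x\in\mathcal{D}_{\mathcal{M}_{\textup{max}}}$, the functions $\varphi_{-}$ and $\psi_{-}$ are identically zero on $(a/2,a)$, so $[x,\varphi_{-}](t)$ and $[x,\psi_{-}](t)$ vanish pointwise on a left neighbourhood of $t=a$, whence $[x,\varphi_{-}]^{a}=[x,\psi_{-}]^{a}=0$. The symmetric argument disposes of $[x,\varphi_{+}]_{-a}$ and $[x,\psi_{+}]_{-a}$.

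The one genuine subtlety, and the step I expect to be the main obstacle, is the existence of the individual limits in \eqref{GBCo}: membership $x\in\mathcal{D}_{\mathcal{M}_{\textup{max}}}$ does not a priori control the boundary behaviour of $(a+t)\,dx(t)/dt$ by itself. This is extracted by bootstrapping on Lemma \ref{EBV}. That lemma guarantees $[x,\varphi_{-}]_{-a}$ exists, and the identity established above forces the existence of $b_{-a}(x)$; once $b_{-a}(x)$ is in hand, the existence of $[x,\psi_{-}]_{-a}$ (again by Lemma \ref{EBV}) forces the existence of $c_{-a}(x)$. The identical two-step extraction at the right endpoint yields $b_{a}(x)$ and $c_{a}(x)$, completing the proof.
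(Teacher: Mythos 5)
Your proposal is correct and follows essentially the same route as the paper: both identify the endpoint forms with the limits \eqref{GBCo} by direct substitution of the explicit expressions \eqref{SpBa} into \eqref{BF} near the relevant endpoint, and both obtain the existence of the limits in \eqref{GBCo} from Lemma \ref{EBV} applied with $y=\varphi_{\pm}$ or $y=\psi_{\pm}$ (legitimate by \eqref{Bel}). Your write-up actually carries out the computation the paper only asserts; the only cosmetic imprecision is that the sign flip between the two endpoints comes from the convention $(t-a)$ versus $(t+a)$ in the definitions \eqref{GBCo2}, \eqref{GBCo4}, not from which linear factor of $p(t)$ survives.
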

\begin{proof} Let us introduce
\begin{subequations}
\label{GBCTr}
\begin{alignat}{2}%
\label{GBCTr1}
b_{-a}(x)&=\frac{ia}{2}\,\Omega_M(x,\varphi_{-}),&\quad
c_{-a}(x)&=\frac{ia}{2}\,\Omega_M(x,\psi_{-}),\\[1.0ex]
\label{GBCTr2}
b^{a}(x)&=\frac{ia}{2}\,\Omega_M(x,\varphi_{+}),&\quad
c^{a}(x)&=\frac{ia}{2}\,\Omega_M(x,\psi_{+}),
\end{alignat}%
\end{subequations}
From \eqref{LRBFo2} it follows that the equalities \eqref{ECBF} hold.
The existence of the limits in \eqref{GBCo} follows from
Lemma \ref{EBV} applied to the functions \(x(t)\) and \(y(t)=\varphi_{\pm}(t)\)
or \(y(t)=\psi_{\pm}(t)\). The equalities \eqref{GBCTr} can be obtained by
the direct computation using the explicit expressions \eqref{SpBa} for the
functions \(\varphi_{\pm}(t),\,\psi_{\pm}(t)\).
\end{proof}
\begin{rem}%
\label{GeBV}
 The values \(b_{-a}(x),\,c_{-a}(x)\),\,
\(b_{a}(x),\,c_{a}(x)\) may be considered as generalized boundary
values related to the function
\(x(t)\in\mathcal{D}_{\mathcal{M}_{\textup{max}}}\) at the
end points \(-a\) and \(a\) of the interval \((-a,a)\).
\end{rem}
In view of \eqref{DoDeMiO} and \eqref{GBCTr}, Lemma \ref{DDMo} can be reformulated as follow.
\begin{thm}
\label{DDMor}
The domain of definition \(\mathcal{D}_{\mathcal{M}_{\textup{min}}}\) of the minimal
differential operator \(\mathcal{M}_{\textup{min}}\) can be characterized by means
of the boundary conditions:
\begin{multline}
\label{DoDeMiOr}
\mathcal{D}_{\mathcal{M}_{\textup{min}}}=
\big\lbrace{}x(t)\in\mathcal{D}_{\mathcal{M}_{\textup{max}}} :
\\
b_{-a}(x)=0,\ \ b^{a}(x)=0, \ \
c_{-a}(x)=0,\ \ c^{a}(x)=0
\big\rbrace.
\end{multline}
\end{thm}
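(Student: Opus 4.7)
The plan is to obtain Theorem \ref{DDMor} as a direct transcription of Lemma \ref{DDMo} into the language of generalized boundary values. Lemma \ref{DDMo} already characterizes $\mathcal{D}_{\mathcal{M}_{\textup{min}}}$ inside $\mathcal{D}_{\mathcal{M}_{\textup{max}}}$ by the four scalar conditions $\Omega_M(x,\varphi_{-})=\Omega_M(x,\psi_{-})=\Omega_M(x,\varphi_{+})=\Omega_M(x,\psi_{+})=0$; the task is therefore only to show, one condition at a time, that each of these equations is equivalent to the vanishing of the corresponding generalized boundary value appearing in \eqref{DoDeMiOr}.

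The equivalence is supplied for free by the identities \eqref{GBCTr} established in Lemma \ref{EEBF}: for every $x\in\mathcal{D}_{\mathcal{M}_{\textup{max}}}$ we have $b_{-a}(x)=\tfrac{ia}{2}\Omega_M(x,\varphi_{-})$, $c_{-a}(x)=\tfrac{ia}{2}\Omega_M(x,\psi_{-})$, $b^{a}(x)=\tfrac{ia}{2}\Omega_M(x,\varphi_{+})$, and $c^{a}(x)=\tfrac{ia}{2}\Omega_M(x,\psi_{+})$. Since $ia/2\neq 0$, each of these identities means that the generalized boundary value on the left vanishes if and only if the associated abstract pairing on the right vanishes. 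Substituting these four equivalences into the characterization \eqref{DoDeMiO} of $\mathcal{D}_{\mathcal{M}_{\textup{min}}}$ given by Lemma \ref{DDMo} produces exactly the formula \eqref{DoDeMiOr}.

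All preparatory facts needed to justify this substitution have already been established earlier in the paper. The cutoff functions $\varphi_{\pm},\psi_{\pm}$ belong to $\mathcal{D}_{\mathcal{M}_{\textup{max}}}$ by \eqref{Bel}, so the sesquilinear pairings $\Omega_M(x,\varphi_{\pm})$ and $\Omega_M(x,\psi_{\pm})$ are well defined on all of $\mathcal{D}_{\mathcal{M}_{\textup{max}}}$ in view of Lemma \ref{SkLf}. The existence of the four limits \eqref{GBCo1}--\eqref{GBCo4} is itself part of Lemma \ref{EEBF}, and ultimately rests on Lemma \ref{EBV} applied to the pairs $(x,\varphi_{\pm})$ and $(x,\psi_{\pm})$. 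There is therefore no genuine analytic obstacle; the proof collapses to a citation of Lemma \ref{DDMo} followed by the substitution afforded by \eqref{GBCTr}. The only point that calls for mild care is purely bookkeeping: one must check that the placement of $-a$ versus $a$, the superscript/subscript conventions on the endpoint forms \eqref{ECBF}, and the factors of $\pm 2/a$ in \eqref{ECBF} all line up correctly with the four conditions listed in \eqref{DoDeMiOr}, which they do.
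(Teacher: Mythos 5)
Your proposal is correct and coincides with the paper's own argument: the paper likewise obtains Theorem \ref{DDMor} as an immediate reformulation of Lemma \ref{DDMo} by substituting the identities \eqref{GBCTr}, which express each generalized boundary value as the nonzero multiple $\tfrac{ia}{2}$ of the corresponding pairing $\Omega_M(x,\varphi_{\pm})$ or $\Omega_M(x,\psi_{\pm})$. Nothing further is needed.
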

{\ }\\
 Due to \eqref{GBCTr}, the equality \eqref{GrMa} can be rewritten
as
\begin{equation}
 \label{Rewr}
 \begin{bmatrix}
b_{-a}(\varphi_{-})&c_{-a}(\varphi_{-})&b_{-a}(\varphi_{-})&c_{a}(\varphi_{-})\\
b_{-a}(\psi_{-})&c_{-a}(\psi_{-})&b_{-a}(\psi_{-})&c_{a}(\psi_{-})\\
b_{-a}(\varphi_{+})&c_{-a}(\varphi_{+})&b_{-a}(\varphi_{+})&c_{a}(\varphi_{+})\\
b_{-a}(\psi_{+})&c_{-a}(\psi_{+})&b_{-a}(\psi_{+})&c_{a}(\psi_{+})
 \end{bmatrix}=
\begin{bmatrix}
0&-1&\phantom{-}0&\phantom{-}0\\
1&\phantom{-}0&\phantom{-}0&\phantom{-}0\\
0&\phantom{-}0&\phantom{-}0&-1\\
0&\phantom{-}0&\phantom{-}1&\phantom{-}0
 \end{bmatrix}\,.
 \vspace*{0.8ex}
 \end{equation}

According to \eqref{GBCTr}, the equalities
\(\Omega_M(x,d^1(U))=0,\,\Omega_M(x,d^2(U))=0\) take the form
\begin{subequations}
\label{SBoCo}
\begin{align}
\label{SBoCo1}
(1+u_{11})\,b_{-a}(x)-i(1-u_{11})\,c_{-a}(x)+u_{12}\,b_{a}(x)+iu_{12}\,c_{a}(x)&=0\,,\\
\label{SBoCo2}
u_{21}\,b_{-a}(x)+iu_{21}\,c_{-a}(x)+(1+u_{22})\,b_{a}(x)-i(1-u_{22})\,c_{a}(x)&=0
\end{align}
\end{subequations}
\begin{rem}
\label{CUM}%
Since the form \(\Omega_M(x,y)\) is \emph{anti}linear with respect
to the argument \(y\): \(\Omega_M(x,\mu{}y)=
\overline{\mu}\,\Omega_M(x,y)\) for \(\mu\in\mathbb{C}\), the
numbers \(i,-i\) which occurs in \eqref{VGeE} must be replaced
with the numbers \(-i,i\) in appropriate positions in the equality
\eqref{SBoCo}. For the same reason, the numbers \(u_{pq}\) which
occurs in \eqref{VGeE} must be replaced with the numbers
\(\overline{u_{pq}}\) in \eqref{SBoCo}. However to simplify the
notation, we replace the number \(u_{pq}\) with the number
\(u_{qp}\) rather with the numbers \(\overline{u_{pq}}\).
In other words,  we use the matrix \(U^{\ast}\)
as a matrix which parameterizes the set of all
\(\Omega_M\)-self-orthogonal subspaces.  The matrix \(U^\ast\) is
an \emph{arbitrary} unitary matrix if \(U\) is an arbitrary unitary matrix.
\end{rem}
\begin{defn}
\label{SAEMOp}%
Let \(U\) be an arbitrary \(2\times2\)  matrix. The operator \(\mathcal{M}_U\)
is
de\-fin\-ed  in the following way:
\begin{enumerate}
\item
The domain of definition \(\mathcal{D}_{\mathcal{M}_U}\) of the
operator \(\mathcal{M}_U\) is the set of all
\(x(t)\in\mathcal{D}_{\mathcal{M}_{\textup{max}}}\) which satisfy
the conditions \eqref{SBoCo1}-\eqref{SBoCo2}, \eqref{GBCo}.
\item
For \(x\in\mathcal{D}_{\mathcal{M}_U}\), the action of the operator \(\mathcal{M}_{U}\) is
\[\mathcal{M}_{U}x=\mathcal{M}_{\textup{max}}x.\]
\end{enumerate}
\end{defn}
\begin{rem}
\label{JDD}%
 In view of \eqref{DoDeMiOr} and \eqref{SBoCo}, for \emph{any}
matrix \(U\),
\[\mathcal{D}_{\mathcal{M}_{\textup{min}}}\subseteq\mathcal{D}_{\mathcal{M}_U}\,.\]
Thus for any matrix \(U\), the operator \(\mathcal{M}_U\) is an
extension of the operator \(\mathcal{U}_{\textup{min}}\):
\begin{equation}
\label{Betw}%
\mathcal{M}_{\textup{min}}\subseteq\mathcal{M}_U\subseteq
\mathcal{M}_{\textup{max}}\,.
\end{equation}
The equalities \eqref{SBoCo} which determine the domain of
definition of the extension \(\mathcal{M}_U\) can be considered as
\emph{boundary conditions} posed on functions
\(x\in\mathcal{D}_{\mathcal{M}_{\textup{max}}}\). \textup{(}See
\textup{Remark \ref{GeBV}.)}
\end{rem}
The following Theorem is a reformulation of Lemma \ref{ODSS} in
the language of extensions of operators.
\begin{thm}{\ } % \\ %
\label{DSAE}
\begin{enumerate}
\item
If \(U\) is an unitary matrix, then the operator \(\mathcal{M}_U\)
is a self-adjoint differential operator, and \ %
\(\mathcal{M}_{\textup{min}}\subset\mathcal{M}_U\subset\mathcal{M}_{\textup{max}}\)\,.
%\[\mathcal{L}_{\textup{min}}\subset\mathcal{L}_U,\quad\mathcal{L}_U=(\mathcal{L}_U)^\ast\,. \]
\item
Every differential operator \(\mathcal{M}\) which is self-adjoint
extension of the minimal differential operator
\(\mathcal{M}_{\textup{min}}\),
\(\mathcal{M}_{\textup{min}}\subset\mathcal{M}\subset\mathcal{M}_{\textup{max}}\),
is of the form
\(\mathcal{M}=\mathcal{M}_U\) for some unitary matrix \(U\).
\item
For unitary matrices \(U_1,\,U_2\),
\begin{equation*}
(U_1=U_2)\Leftrightarrow(\mathcal{M}_{U_1}=\mathcal{M}_{U_2})\,.
\end{equation*}
\end{enumerate}
\end{thm}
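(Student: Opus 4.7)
The plan is to translate Lemma \ref{ODSS}, which classifies all $\Omega_M$-self-orthogonal subspaces of $\mathcal{E}_M$, into a classification of self-adjoint extensions of $\mathcal{M}_{\textup{min}}$ via the abstract correspondence of Lemma \ref{CrSel}. Since the bulk of the work has already been done at the level of boundary spaces, the theorem should follow by dictionary translation, once one verifies that the operator-theoretic domain $\mathcal{D}_{\mathcal{M}_U}$ is the preimage of the abstract subspace $\mathcal{O}_U$.

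The first step is precisely this identification: I would observe that $\mathcal{D}_{\mathcal{M}_U}$ coincides with the preimage in $\mathcal{D}_{\mathcal{M}_{\textup{max}}}$ of the subspace $\mathcal{O}_U \subseteq \mathcal{E}_M$ under the quotient map $\mathcal{D}_{\mathcal{M}_{\textup{max}}} \to \mathcal{E}_M$. This is immediate from Definition \ref{SAEMOp}: substituting the expressions \eqref{GBCTr}, which convert $b_{\pm a}(x)$ and $c_{\pm a}(x)$ into values of $\Omega_M$ against $\varphi_{\pm}$ and $\psi_{\pm}$, into the boundary conditions \eqref{SBoCo}, one sees that those conditions are exactly $\Omega_M(x,d^1(U))=0$ and $\Omega_M(x,d^2(U))=0$, i.e.\ that the image of $x$ in $\mathcal{E}_M$ lies in $\mathcal{O}_U$.

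With this identification in hand, I would apply Lemma \ref{CrSel} to $A=\mathcal{M}_{\textup{min}}$ and $A^\ast=\mathcal{M}_{\textup{max}}$ (the latter by Theorem \ref{MSt}): the extension $\mathcal{M}_U$ is self-adjoint if and only if $\mathcal{O}_U$ is $\Omega_M$-self-orthogonal. Part (1) of Lemma \ref{ODSS} then yields assertion (1) of the theorem for every unitary $U$, together with the inclusion $\mathcal{M}_{\textup{min}}\subset\mathcal{M}_U\subset\mathcal{M}_{\textup{max}}$ coming from \eqref{Betw}. For assertion (2), any self-adjoint extension $\mathcal{M}$ between $\mathcal{M}_{\textup{min}}$ and $\mathcal{M}_{\textup{max}}$ has domain descending to an $\Omega_M$-self-orthogonal subspace of $\mathcal{E}_M$, which by Lemma \ref{ODSS}(2) is $\mathcal{O}_U$ for some unitary $U$; hence $\mathcal{M}=\mathcal{M}_U$. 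Assertion (3) is immediate from Lemma \ref{ODSS}(3).

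The main obstacle, though essentially bookkeeping rather than mathematics, is keeping the conjugation conventions of Remark \ref{CUM} straight: because $\Omega_M$ is antilinear in its second argument, the naive matching between $\mathcal{O}_U$ and the boundary conditions produces the matrix $U^\ast$ rather than $U$. Since $U\mapsto U^\ast$ is a bijection of the $2\times 2$ unitary group, the statement of the theorem is unaffected, but the identification of $\mathcal{D}_{\mathcal{M}_U}$ with the preimage of $\mathcal{O}_U$ must be carried out with the convention fixed in Definition \ref{SAEMOp} kept consistently in view.
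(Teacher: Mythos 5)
Your proposal is correct and follows essentially the same route as the paper: the paper itself introduces Theorem \ref{DSAE} with the single remark that it ``is a reformulation of Lemma \ref{ODSS} in the language of extensions of operators,'' which is precisely the dictionary translation you carry out (identify $\mathcal{D}_{\mathcal{M}_U}$ with the preimage of the self-orthogonal subspace via \eqref{GBCTr} and \eqref{SBoCo}, then invoke Lemma \ref{CrSel} and Lemma \ref{ODSS}). Your explicit handling of the $U\mapsto U^{\ast}$ relabeling from Remark \ref{CUM} is exactly the bookkeeping the paper leaves implicit, and it is resolved correctly since conjugation is a bijection of the unitary group.
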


The equalities \eqref{LDO}, which relate the formal Legendre operator \(L\) and formal prolate spheroid operator \(M\), lead to the equalities
\begin{subequations}
\label{ERML}
\begin{gather}
\mathcal{L}_{\textup{max}}=\mathcal{M}_{\textup{max}}+Q,\label{ERML1}\\
\mathcal{L}_{\textup{min}}=\mathcal{M}_{\textup{min}}+Q,\label{ERML2}
\end{gather}
where \(Q\) is the multiplication operator:
\begin{equation}
\label{DMO}
\mathcal{D}_Q=L^2([-a,a]), \quad (Qx)(t)=t^2x(t).
\end{equation}
\end{subequations}
The operator \(Q\) is a bounded self-adjoint operator:
\begin{equation}
\label{DMOp}
Q=Q^{\ast}.
\end{equation}
So there are no problems with the equalities \eqref{ERML}. We may consider the operators
in the right hand sides of the equalities \eqref{ERML} as \emph{definitions} for the operators in the left hand sides. In particular, the domains of definition coincide:
\begin{equation}
\label{CoDD}
\mathcal{D}_{\mathcal{L}_{\textup{max}}}=\mathcal{D}_{\mathcal{M}_{\textup{max}}},\quad
\mathcal{D}_{\mathcal{L}_{\textup{min}}}=\mathcal{D}_{\mathcal{M}_{\textup{min}}}.
\end{equation}
The relations
\begin{equation*}%
\mathcal{L}_{{}_\textup{min}}\subseteq(\mathcal{L}_{{}_\textup{min}})^\ast\,;\quad
(\mathcal{L}_{{}_\textup{min}})^\ast=\mathcal{L}_{{}_\textup{max}},\quad
(\mathcal{L}_{{}_\textup{max}})^\ast=\mathcal{L}_{{}_\textup{min}}\,.
\end{equation*}%
are consequences of the relations \eqref{MinSym}, \eqref{ATDO}, of the definitions
\eqref{ERML} and of the equality \eqref{DMOp}. In view of \eqref{DMOp}, the boundary forms
\(\Omega_M\) and \(\Omega_L\) coincide. The boundary linear forms related to the
operators \(L\) and \(M\) are the same and are expressed by \eqref{GBCo}.
Finally the self-adjoint extensions of the symmetric operator \(\mathcal{L}_{{}_\textup{min}}\) are in one-to-one correspondence with \(2\times2\) unitary
matrices \(U\). This correspondence is of the form \(U\Leftrightarrow\mathcal{L}_U\), where
the domain of definitions \(\mathcal{D}_{\mathcal{L}_{U}}=\mathcal{D}_{\mathcal{M}_{U}}\)
is described by liner boundary conditions \eqref{SBoCo}. Moreover the equality
\begin{equation}
\label{Pot}
\mathcal{L}_U=\mathcal{M}_U+Q
\end{equation}
holds.
\section{Spectral analysis of the operators \(\mathcal{L}_U\).}
The matrix \(I\) is \(2\times2\) identity matrix:
\(I=\bigl[\begin{smallmatrix}1&0\\0&1\end{smallmatrix}\bigr]\).
The operators \(\mathcal{L}_I\) and \(\mathcal{M}_I\) are the
operators \(\mathcal{L}_U\) and \(\mathcal{M}_U\) corresponding to the choice \(U=I\). In particular, for \(U=I\) the boundary conditions \eqref{SBoCo} take the form
\begin{equation}
\label{FBC}
\lim\limits_{|\xi|\to{}a-0}\bigg(1-\frac{\xi^2}{a^2}\bigg)\frac{dx(\xi)}{d\xi}=0,
\quad\forall\,x\in\mathcal{D}_{\mathcal{M}_{I}}=\mathcal{D}_{\mathcal{L}_{I}},
\end{equation}
\begin{lem}
\label{CAEP}
Let \(x\in\mathcal{D}_{\mathcal{M}_{I}}\), and
\begin{equation}
\label{Pr}
\int\limits_{-a}^{a}\bigg|\frac{d\,\,}%
{d\xi}\bigg(\bigg(1-\frac{\xi^2}{a^2}\bigg)\frac{dx(\xi)}{d\xi}\bigg)\bigg|^2\,d\xi=C^2<\infty, \ \ C=C(x)>0.
\end{equation}
Then
\begin{equation}
\label{EPV}
\bigg|\frac{dx(t)}{dt}\bigg|\leq\sqrt{2}\,C\,a^{3/2},\quad \forall\,t\in(-a,a).
\end{equation}
\end{lem}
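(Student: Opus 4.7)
The plan is to introduce $h(t)=p(t)\,dx(t)/dt$, where $p(t)=1-t^{2}/a^{2}$, to control $h$ pointwise by a one-dimensional Sobolev argument, and then to divide by $p$ to recover the announced bound on $dx/dt$. The hypothesis \eqref{Pr} is exactly $\|h'\|_{L^{2}(-a,a)}=C$, since $h'=-Mx$. The boundary condition \eqref{FBC} characterising $\mathcal{D}_{\mathcal{M}_{I}}$ gives $\lim_{t\to\pm a}h(t)=0$, so $h$ extends to an absolutely continuous function on $[-a,a]$ vanishing at both endpoints, with $L^{2}$ derivative of norm $C$.

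Applying the fundamental theorem of calculus from each endpoint together with the Cauchy--Schwarz inequality will yield the two one-sided estimates
\[|h(t)|^{2}=\Big|\int_{-a}^{t}h'(s)\,ds\Big|^{2}\le(a+t)\,C^{2},\qquad|h(t)|^{2}=\Big|\int_{t}^{a}h'(s)\,ds\Big|^{2}\le(a-t)\,C^{2}.\]
Taking the smaller of the two and exploiting $\max(a+t,a-t)\ge a$, so that $\min(a+t,a-t)\le(a+t)(a-t)/a=a\,p(t)$, gives the joint pointwise estimate $|h(t)|^{2}\le a\,p(t)\,C^{2}$. The next step is then to divide through by $p(t)$, use $dx(t)/dt=h(t)/p(t)$, and tidy up the constants via $p(t)\le 1$ and $a\pm t\le 2a$ to reach the claimed constant $\sqrt{2}\,Ca^{3/2}$.

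I expect the main obstacle to lie in this final division by $p(t)$. The joint estimate above yields only $|dx/dt|\le C\sqrt{a/p(t)}$, which degenerates as $|t|\to a$; either one-sided bound alone is no better. The square-root vanishing of $h$ at the endpoints afforded by $h'\in L^{2}$ just barely makes $dx/dt=h/p$ integrable, not uniformly bounded. The delicate point will therefore be to upgrade the endpoint vanishing rate of $h$ from square-root to linear, which likely requires additionally exploiting $x\in L^{2}$ and $Mx\in L^{2}$ in concert with the specific $U=I$ boundary conditions built into $\mathcal{D}_{\mathcal{M}_{I}}$.
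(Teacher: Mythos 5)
Your reduction is the same as the paper's: set \(h(t)=p(t)\,\frac{dx(t)}{dt}\), so that \eqref{Pr} reads \(\|h'\|_{L^2(-a,a)}=C\) and the boundary condition \eqref{FBC} gives \(h(\pm a)=0\); both arguments then try to convert endpoint vanishing of \(h\) into the uniform bound \eqref{EPV} on \(h/p\). But your proposal stops exactly where the real difficulty is, and you are right to be suspicious there: the fundamental theorem of calculus plus Cauchy--Schwarz yields only \(|h(t)|\le C\sqrt{\min(a+t,a-t)}\), i.e.\ square-root vanishing at the endpoints, whereas dividing by \(p(t)=(a+t)(a-t)/a^2\) requires linear vanishing. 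You do not supply the missing upgrade, so as it stands the proposal does not prove the lemma.

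What you should know is that the paper's own proof bridges this gap by fiat. It first deduces \(\int_{-a}^{a}|h'(\xi)|\,d\xi\le\sqrt{2a}\,C\) and then asserts \(|h(t)|\le\sqrt{2a}\,C\,\min(a+t,a-t)\) ``from \eqref{FBC}''; but an \(L^1\) or \(L^2\) bound on \(h'\) together with \(h(\pm a)=0\) gives only the square-root rate you found --- a linear rate would require a pointwise bound on \(h'\), which is not available. The square-root rate is in fact sharp: taking \(h(t)=(a^2-t^2)^{3/4}\) gives \(h'\in L^2\), \(h(\pm a)=0\), hence \(b_{\pm a}(x)=0\), and \(x(t)=a^2\int_0^t(a^2-s^2)^{-1/4}\,ds\) is bounded, so \(x\in\mathcal{D}_{\mathcal{M}_I}\) with \eqref{Pr} finite, while \(\frac{dx}{dt}=a^2(a^2-t^2)^{-1/4}\) is unbounded. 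So the uniform bound \eqref{EPV} cannot hold in the stated generality, and the extra leverage you hoped to extract from \(x\in L^2\) and the \(U=I\) boundary conditions is not there. The honest conclusion of this line of argument is \(\bigl|\frac{dx(t)}{dt}\bigr|\le C\,a/\sqrt{\min(a+t,a-t)}\), which is still integrable near \(\pm a\) and therefore still suffices for the only downstream use, namely the existence of the finite limits in Lemma \ref{BNE}. In short: your diagnosis of the obstacle is correct, your proof is incomplete, and the step you could not justify is precisely the step at which the paper's argument, and the constant in \eqref{EPV}, breaks down.
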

\begin{proof}
From \eqref{Pr} and the Schwarz inequality we obtain
\begin{equation}
\label{SI}
\int\limits_{-a}^{a}\bigg|\frac{d\,\,}%
{d\xi}\bigg(\bigg(1-\frac{\xi^2}{a^2}\bigg)\frac{dx(\xi)}{d\xi}\bigg)\bigg|\,d\xi\leq
\sqrt{2a}\,C.
\end{equation}
From \eqref{FBC} and \eqref{SI} we derive the inequality
\begin{equation*}
\bigg|\bigg(1-\frac{t^2}{a^2}\bigg)\frac{dx(t)}{dt}\bigg|\leq\sqrt{2a}\,C\,\min(a+t,a-t),
\quad\forall\,t\in(-a,a).
\end{equation*}
Since
\(\min(a+t,a-t)\leq\,a\big(1-\frac{t^2}{a^2}\big)\), we obtain the inequality
\eqref{EPV}.
\end{proof}
\begin{subequations}
\label{Cor}
\begin{lem}
\label{BNE}
Let \(x\in\mathcal{D}_{\mathcal{L}_{I}}\). Then the limits
\begin{equation}
\label{Cor1}
x(-a+0)=\lim\limits_{t\to{}-a+0}x(t),\quad x(a-0)=\lim\limits_{t\to{}a-0}x(t)
\end{equation}
 exist and are finite:
 \begin{equation}
\label{Cor2}
|x(-a+0)|<\infty,\quad |x(a-0)|<\infty.
 \end{equation}
\end{lem}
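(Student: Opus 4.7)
The plan is to deduce this from Lemma \ref{CAEP}, which gives a pointwise bound on $dx/dt$, and then integrate that bound to obtain Lipschitz continuity of $x$ on $(-a,a)$.

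First I would verify that the hypothesis of Lemma \ref{CAEP} is satisfied. By \eqref{CoDD} and the definition of $\mathcal{L}_I$, $\mathcal{M}_I$ (both determined by the same boundary condition \eqref{FBC} through the choice $U=I$), we have $\mathcal{D}_{\mathcal{L}_I} = \mathcal{D}_{\mathcal{M}_I}$. For $x \in \mathcal{D}_{\mathcal{L}_I}$, the function $\mathcal{L}_I x = \mathcal{M}_{\textup{max}} x + Qx$ lies in $L^2([-a,a])$ by the very definition of the maximal domain; since $Q$ is bounded on $L^2$, so does $\mathcal{M}_{\textup{max}} x = \mathcal{L}_I x - Qx$. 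This is precisely condition \eqref{Pr} with some finite constant $C = C(x)$.

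Next, Lemma \ref{CAEP} yields a uniform bound
\begin{equation*}
\left|\frac{dx(t)}{dt}\right| \le \sqrt{2}\,C\,a^{3/2}, \qquad \forall\, t \in (-a,a).
\end{equation*}
Since $x \in \mathcal{A}$, the derivative $dx/dt$ is absolutely continuous on every compact subinterval of $(-a,a)$, so $x$ is in particular absolutely continuous on every such subinterval and
\begin{equation*}
x(t_2) - x(t_1) = \int_{t_1}^{t_2} \frac{dx(s)}{ds}\,ds, \qquad \forall\, t_1, t_2 \in (-a,a).
\end{equation*}
Combined with the uniform bound on $dx/dt$, this gives $|x(t_2) - x(t_1)| \le \sqrt{2}\,C\,a^{3/2}\,|t_2 - t_1|$, so $x$ is Lipschitz continuous on $(-a,a)$.

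A Lipschitz function on $(-a,a)$ is in particular uniformly continuous, hence extends continuously to the closed interval $[-a,a]$. Therefore the one-sided limits $x(-a+0)$ and $x(a-0)$ in \eqref{Cor1} exist and satisfy \eqref{Cor2}. The only place where anything nontrivial happens is the initial reduction to Lemma \ref{CAEP}, which hinges on the boundary condition \eqref{FBC} built into $\mathcal{D}_{\mathcal{L}_I}$; without this boundary condition, the singular factor $1 - t^2/a^2$ could allow $dx/dt$ to be unbounded near $\pm a$.
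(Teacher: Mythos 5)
Your proof is correct and follows exactly the route the paper intends: Lemma \ref{BNE} is presented there as an immediate corollary of Lemma \ref{CAEP} (no separate proof is given), namely that the boundary condition \eqref{FBC} forces a uniform bound on $dx/dt$, hence Lipschitz continuity and existence of the one-sided limits. Your preliminary verification that condition \eqref{Pr} holds automatically for $x\in\mathcal{D}_{\mathcal{L}_I}$ (via boundedness of $Q$ and the definition of the maximal domain) is a detail the paper leaves implicit, and it is handled correctly.
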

\end{subequations}
\newpage
\begin{thm}{\ }
\label{DiSp}%
\begin{enumerate}
\item
The self-adjoint operator \(\mathcal{M}_I\) is non-negative:
\begin{equation}
\label{PoM}
\langle\mathcal{M}_I\,x,x\rangle\geq0\,,\quad \forall\,x\in\mathcal{D}_{\mathcal{M}_{I}},\,x\not=0.
\end{equation}
\item
The self-adjoint operator \(\mathcal{L}_I\) is positive:
\begin{equation}
\label{PoL}
\langle\mathcal{L}_I\,x,x\rangle>0\,,\quad \forall\,x\in\mathcal{D}_{\mathcal{L}_{I}},\,x\not=0.
\end{equation}
\end{enumerate}
\end{thm}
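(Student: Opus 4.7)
The plan is to compute $\langle\mathcal{M}_I x,x\rangle$ by integration by parts on truncated subintervals $[-a+\varepsilon,a-\varepsilon]$ and then pass to the limit $\varepsilon\to0$, using the boundary condition \eqref{FBC} satisfied by every $x\in\mathcal{D}_{\mathcal{M}_I}$ together with the finite-boundary-value property of Lemma \ref{BNE} to kill the boundary contributions.

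More explicitly, for $x\in\mathcal{D}_{\mathcal{M}_I}$ and $\varepsilon>0$, integration by parts gives
\begin{equation*}
\int\limits_{-a+\varepsilon}^{a-\varepsilon}\!\Big(\!-\frac{d}{dt}\!\big(p(t)x'(t)\big)\Big)\overline{x(t)}\,dt
=\int\limits_{-a+\varepsilon}^{a-\varepsilon}\!p(t)\,|x'(t)|^2\,dt
\;-\;\Big[p(t)\,x'(t)\,\overline{x(t)}\Big]_{-a+\varepsilon}^{\,a-\varepsilon}.
\end{equation*}
At the right endpoint, $p(a-\varepsilon)x'(a-\varepsilon)\to 0$ by the boundary condition \eqref{FBC}, while $x(a-\varepsilon)$ stays bounded by Lemma \ref{BNE} (which applies since $\mathcal{D}_{\mathcal{L}_I}=\mathcal{D}_{\mathcal{M}_I}$, cf.\ \eqref{CoDD}); the left endpoint is symmetric. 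The left-hand side converges to $\langle\mathcal{M}_I x,x\rangle$ because $\mathcal{M}_I x,\,x\in L^2([-a,a])$, and the integral on the right is monotone in $\varepsilon$ (the integrand is nonnegative), so it converges to a finite limit. This yields the key identity
\begin{equation*}
\langle\mathcal{M}_I x,x\rangle=\int\limits_{-a}^{a}p(t)\,|x'(t)|^2\,dt,
\end{equation*}
and since $p(t)=1-t^2/a^2\ge0$ on $[-a,a]$, the non-negativity \eqref{PoM} follows at once.

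For part~(2), I would use the splitting $\mathcal{L}_I=\mathcal{M}_I+Q$ from \eqref{Pot}. Then
\begin{equation*}
\langle\mathcal{L}_I x,x\rangle=\int\limits_{-a}^{a}p(t)\,|x'(t)|^2\,dt+\int\limits_{-a}^{a}t^2\,|x(t)|^2\,dt,
\end{equation*}
both terms non-negative. For $x\ne 0$ in $L^2([-a,a])$, the set $\{t:t^2=0\}=\{0\}$ has measure zero, so the second integral is strictly positive, giving \eqref{PoL}.

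The main obstacle is the justification of the limiting integration by parts: one must show that, despite the singularity of $p(t)^{-1}$ at the endpoints and the fact that $x'(t)$ need not have a finite limit there, the product $p(t)x'(t)\overline{x(t)}$ does vanish at $t=\pm a$. This is precisely the content of combining \eqref{FBC} (which gives $p(t)x'(t)\to 0$) with the boundedness of $x$ near the endpoints supplied by Lemma \ref{BNE}; once these two ingredients are in place, the rest is routine.
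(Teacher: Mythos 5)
Your proposal is correct and follows essentially the same route as the paper: integration by parts yields $\langle\mathcal{M}_I x,x\rangle=\int_{-a}^{a}\bigl(1-\tfrac{t^2}{a^2}\bigr)|x'(t)|^2\,dt$, with the boundary terms killed by combining \eqref{FBC} with the boundedness of $x$ at the endpoints from Lemma \ref{BNE}, and part (2) then follows from $\mathcal{L}_I=\mathcal{M}_I+Q$ and the strict positivity of $Q$. Your version merely spells out the truncation-and-limit argument that the paper leaves implicit.
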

\begin{proof}{\ }\\
\textbf{1.} Integrating by parts, we obtain
\[
\int\limits_{-a}^{a}-\frac{d\,\,}{dt}
\bigg(\Big(1-\frac{t^2}{a^2}\Big)\frac{dx}{dt}\bigg)\cdot\overline{x(t)}\,dt=
\int\limits_{-a}^{a}\Big(1-\frac{t^2}{a^2}\Big)\Big|\frac{dx}{dt}\Big|^2dt.
\]
In view of \eqref{FBC} and \eqref{Cor}, the summands corresponding to the endpoints
\(-a\) and \(a\) disappear. The last equality can be interpreted as
\[\langle\mathcal{M}_Ix,x\rangle=
\int\limits_{-a}^{a}\Big(1-\frac{t^2}{a^2}\Big)\Big|\frac{dx}{dt}\Big|^2dt,
\quad \forall\,x\in\mathcal{D}_{\mathcal{M}_{I}}.\]
So the inequality \eqref{PoM} holds.\\
\textbf{2}. The operator \(Q\) is positive:
\begin{equation}
\label{PoQ}
\langle{}Qx,x\rangle>0,\quad \forall\,x\in{}L^2([-a,a]),\,x\not=0.
\end{equation}
The inequality \eqref{PoL} is a consequence of the inequalities \eqref{PoM},\eqref{PoQ}
and of the equality \eqref{Pot} with \(U=I\).
\end{proof}
Let \(\mathcal{I}\) be the identity operator in \(L^2([-a,a])\).
\begin{lem}{\ }
\label{MDS}
Given \(\lambda\in\mathbb{C}\setminus[0,\infty)\), the operators \
\((\mathcal{M}_I-\lambda\mathcal{I})^{-1}\) and \((\mathcal{L}_I-\lambda\mathcal{I})^{-1}\) are compact operators.
\end{lem}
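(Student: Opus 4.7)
The plan is to handle $(\mathcal{M}_I - \lambda\mathcal{I})^{-1}$ first by a direct Arzel\`a--Ascoli argument exploiting the pointwise bound on the derivative furnished by Lemma \ref{CAEP}, and then to obtain compactness of $(\mathcal{L}_I - \lambda\mathcal{I})^{-1}$ as a corollary via the second resolvent identity. First I observe that by Theorem \ref{DiSp} both $\mathcal{M}_I$ and $\mathcal{L}_I$ are self-adjoint and non-negative, so their spectra lie in $[0,\infty)$; every $\lambda\in\mathbb{C}\setminus[0,\infty)$ therefore belongs to both resolvent sets, and the two inverse operators are well defined and bounded on $L^2([-a,a])$.

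For $(\mathcal{M}_I-\lambda\mathcal{I})^{-1}$ I aim to show that the image of the $L^2$-unit ball is a relatively compact subset of $L^2([-a,a])$. Given $f$ in that unit ball and $x=(\mathcal{M}_I-\lambda\mathcal{I})^{-1}f$, boundedness of the resolvent controls $\|x\|_{L^2}$, and the identity $\mathcal{M}_I x=f+\lambda x$ controls $\|\mathcal{M}_I x\|_{L^2}$; Lemma \ref{CAEP} then yields a sup-norm bound on $x'$ that is uniform in $f$. Combining this Lipschitz estimate with an elementary mean-value argument and the $L^2$ bound on $x$ produces a sup-norm bound on $x$ itself which is likewise uniform in $f$ (Lemma \ref{BNE} ensures that the limits $x(-a+0)$ and $x(a-0)$ actually exist, so $x$ extends to an element of $C([-a,a])$). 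The family $\{x:\|f\|_{L^2}\le 1\}$ is thus bounded and equicontinuous in $C([-a,a])$, hence precompact by Arzel\`a--Ascoli, and therefore precompact in $L^2([-a,a])$ via the continuous embedding $C([-a,a])\hookrightarrow L^2([-a,a])$.

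To pass from $\mathcal{M}_I$ to $\mathcal{L}_I=\mathcal{M}_I+Q$, with $Q$ a bounded self-adjoint multiplication operator, I invoke the second resolvent identity
\[
(\mathcal{L}_I-\lambda\mathcal{I})^{-1}=(\mathcal{M}_I-\lambda\mathcal{I})^{-1}-(\mathcal{L}_I-\lambda\mathcal{I})^{-1}\,Q\,(\mathcal{M}_I-\lambda\mathcal{I})^{-1}.
\]
The right-hand side is the difference of a compact operator and a product of two bounded operators with a compact one, so it is compact. The only non-automatic step I anticipate is the upgrade from the sup-norm bound on $x'$ (via Lemma \ref{CAEP}) together with the $L^2$ bound on $x$ to a sup-norm bound on $x$ that is uniform over the $L^2$-unit ball of $f$; this takes some care so that the Arzel\`a--Ascoli step is rigorously justified, but it is not a deep difficulty.
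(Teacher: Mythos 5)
Your proposal is correct, but the first half takes a genuinely different route from the paper. For the compactness of \((\mathcal{M}_I-\lambda\mathcal{I})^{-1}\) the paper diagonalizes \(\mathcal{M}_I\) explicitly: the scaled Legendre polynomials \(v_k(t)=P_k(t/a)\) form a complete orthogonal system of eigenfunctions with eigenvalues \(\mu_k=k(k+1)/a^2\to\infty\), and a self-adjoint operator with such a discrete spectral resolution automatically has compact resolvent. You instead run a soft compactness argument: the a priori derivative bound of Lemma \ref{CAEP} (with \(C=\|\mathcal{M}_Ix\|_{L^2}\le 1+|\lambda|\,\|(\mathcal{M}_I-\lambda\mathcal{I})^{-1}\|\) uniform over the unit ball), the elementary passage from a uniform Lipschitz constant plus an \(L^2\) bound to a uniform sup bound, and Arzel\`a--Ascoli followed by the continuous embedding \(C([-a,a])\hookrightarrow L^2([-a,a])\). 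Your argument is sound, and the step you flag as delicate is indeed fine: since \(|x'|\le L\) on \((-a,a)\), there is a point \(t_0\) with \(|x(t_0)|\le\|x\|_{L^2}/\sqrt{2a}\), whence \(\sup|x|\le\|x\|_{L^2}/\sqrt{2a}+2aL\). What each approach buys: the paper's computation is shorter once one knows the Legendre eigenfunctions and, more importantly, it delivers the explicit spectrum of \(\mathcal{M}_I\) as a by-product; your argument avoids any special-function input and actually puts Lemma \ref{CAEP} to work (the paper proves that lemma but does not use it in its own proof of Lemma \ref{MDS}), at the cost of being tied to the boundary condition \(U=I\) through \eqref{FBC}, just as the paper's is. The second halves coincide: both invoke the second resolvent identity for \(\mathcal{L}_I=\mathcal{M}_I+Q\) with \(Q\) bounded (you write the correction term as \((\mathcal{L}_I-\lambda\mathcal{I})^{-1}Q(\mathcal{M}_I-\lambda\mathcal{I})^{-1}\), the paper with the factors in the other order; both identities are valid and either yields compactness).
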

\begin{proof} Since both operators \(\mathcal{M}_I\) and \(\mathcal{L}_I\)
are self-adjoint and non-negative, both resolvents
\((\mathcal{M}_I-\lambda\mathcal{I})^{-1}\) and \((\mathcal{L}_I-\lambda\mathcal{I})^{-1}\)
exist and are bounded operators.

The spectral analysis of the operator \(\mathcal{M}_I\) can be done explicitly.
Let \(P_k(t)\) be the Legendre polynomials:
\[P_k(t)=\frac{1}{2^kk!}\frac{d^k\,\,}{dt^k}\,(t^2-1)^k,\quad k=0,1,2,\,\ldots\,,\]
and
\begin{equation}
 v_k(t)=P_k(t/a),\quad t\in[-a,a],\quad \quad k=0,1,2,\,\ldots\,.
\end{equation}
The system \(\{v_k(t)\}_{k=0,1,2,\,\ldots}\) is a complete orthogonal system in
\(L^2([-a,a])\). The functions \(v_k(t)\) are eigenfunctions of the operator \(\mathcal{M}_I\):
\begin{subequations}
\label{SAM}
\begin{align}
(\mathcal{M}_Iv_k)(t)&=\mu_kv_k(t),\label{SAM1}\\
\intertext{where}
\mu_k=\frac{k(k+1)}{a^2}\ccomma &\  \quad k=0,1,2,\,\ldots\,.\label{SAM2}
\end{align}
Thus the operator \(\mathcal{M}_I\) is an operator with discrete spectrum and the resolvent
\((\mathcal{M}_I-\lambda\mathcal{I})^{-1}\) is a compact operator.
\end{subequations}
Since
\[(\mathcal{L}_I-\lambda\mathcal{I})^{-1}=(\mathcal{M}_I-\lambda\mathcal{I})^{-1}-
(\mathcal{M}_I-\lambda\mathcal{I})^{-1}Q(\mathcal{L}_I-\lambda\mathcal{I})^{-1},\]
the operator \((\mathcal{L}_I-\lambda\mathcal{I})^{-1}\) is a compact operator as well.
\end{proof}
\begin{lem}{\ }
\label{MDSU}
Given \(\lambda\in\mathbb{C}\setminus(-\infty,\infty)\) and an unitary matrix \(U\), the operator \((\mathcal{L}_U-\lambda\mathcal{I})^{-1}\) is a compact operator.
\end{lem}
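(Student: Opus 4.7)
The plan is to reduce to Lemma \ref{MDS} by showing that the difference
\[
R_U - R_I \stackrel{\textup{\tiny def}}{=}(\mathcal{L}_U-\lambda\mathcal{I})^{-1}-(\mathcal{L}_I-\lambda\mathcal{I})^{-1}
\]
is a finite-rank operator, hence compact. First I would note that since $\mathcal{M}_U$ is self-adjoint (Theorem \ref{DSAE}) and $Q$ is bounded self-adjoint, the operator $\mathcal{L}_U=\mathcal{M}_U+Q$ is self-adjoint; therefore $R_U$ exists and is bounded for every non-real $\lambda$.

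To estimate the rank of $R_U-R_I$, fix any $f\in L^2([-a,a])$ and set $x_U=R_Uf$, $x_I=R_If$. Because both $\mathcal{L}_U$ and $\mathcal{L}_I$ are restrictions of $\mathcal{L}_{\textup{max}}$ to their respective domains,
\[
(\mathcal{L}_{\textup{max}}-\lambda\mathcal{I})x_U=f=(\mathcal{L}_{\textup{max}}-\lambda\mathcal{I})x_I,
\]
so $x_U-x_I\in\ker(\mathcal{L}_{\textup{max}}-\lambda\mathcal{I})$. In view of \eqref{ERML}, \eqref{CoDD} and the fact that $Q$ is bounded self-adjoint, the minimal operator $\mathcal{L}_{\textup{min}}$ has the same deficiency indices $(2,2)$ as $\mathcal{M}_{\textup{min}}$ (Lemma \ref{LDefInd}). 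By the von Neumann theorem, $\dim\ker(\mathcal{L}_{\textup{max}}-\lambda\mathcal{I})=\dim\mathcal{N}_{\bar{\lambda}}=2$ for every $\lambda\in\mathbb{C}\setminus\mathbb{R}$. Consequently $R_U-R_I$ maps the whole space $L^2([-a,a])$ into a fixed two-dimensional subspace of $L^2([-a,a])$, so its rank is at most $2$.

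Now, by Lemma \ref{MDS} the operator $R_I$ is compact (for $\lambda\in\mathbb{C}\setminus[0,\infty)\supset\mathbb{C}\setminus\mathbb{R}$), and any finite-rank operator is compact. Writing $R_U=R_I+(R_U-R_I)$ and using that the sum of two compact operators is compact yields the desired compactness of $(\mathcal{L}_U-\lambda\mathcal{I})^{-1}$.

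The only subtlety worth checking carefully is that $\dim\ker(\mathcal{L}_{\textup{max}}-\lambda\mathcal{I})=2$ for non-real $\lambda$; this follows either from the translation of deficiency indices through \eqref{ERML} and the definition of $\mathcal{N}_\lambda$ in \eqref{DefSp}, or, alternatively and independently, by repeating the asymptotic analysis of Lemma \ref{ABSNS} and Lemma \ref{InArS} for the equation $Mx+t^2 x=\lambda x$, whose singular structure at $\pm a$ is identical to that of \eqref{DEFEWP} since $Q$ only modifies the coefficient of $x$ by the bounded regular term $t^2$.
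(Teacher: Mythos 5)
Your proof is correct and follows essentially the same route as the paper: write \((\mathcal{L}_U-\lambda\mathcal{I})^{-1}=(\mathcal{L}_I-\lambda\mathcal{I})^{-1}+\big(R_U-R_I\big)\), observe that the difference of resolvents of two self-adjoint extensions of \(\mathcal{L}_{\textup{min}}\) (deficiency indices \((2,2)\)) has rank at most two, and invoke Lemma \ref{MDS}. The only difference is that you actually justify the rank bound by placing \(x_U-x_I\) in \(\ker(\mathcal{L}_{\textup{max}}-\lambda\mathcal{I})\), whereas the paper simply asserts it.
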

\begin{proof}
Since \(\lambda\notin\mathbb{R}\), both resolvents
\((\mathcal{L}_U-\lambda\mathcal{I})^{-1}\), \ \((\mathcal{L}_I-\lambda\mathcal{I})^{-1}\)
exist. Since both operators \(\mathcal{L}_U\) and \(\mathcal{L}_I\) and extensions of the same operator \(\mathcal{L}_{{}_\textup{min}}\) with deficiency indices
\begin{equation}
\label{DeIn}
n_+(\mathcal{L}_{{}_\textup{min}})=n_-(\mathcal{L}_{{}_\textup{min}})=2,
\end{equation}
 the difference
of the resolvents \((\mathcal{L}_U-\lambda\mathcal{I})^{-1}-(\mathcal{L}_I-\lambda\mathcal{I})^{-1}\)
is an operator which rank does not exceed two. According to Lemma \eqref{MDS}, the operator
\((\mathcal{L}_I-\lambda\mathcal{I})^{-1}\) is compact. Hence the operator
\((\mathcal{L}_U-\lambda\mathcal{I})^{-1}\) is compact.
\end{proof}

\begin{thm}
\label{DSOU}{\ }\\[-3.5ex]
\begin{enumerate}
\item
For any unitary matrix
\(U=\bigl[\begin{smallmatrix}u_{11}&u_{12}\\u_{21}&u_{22}\end{smallmatrix}\bigr]\),
the spectrum of the operator \(\mathcal{L}_U\) is discrete. This spectrum is formed
by the sequence \(\{\lambda_k(\mathcal{L}_U)\}_{1\leq k<\infty}\) of the eigenvalues of
\(\mathcal{L}_U\):\\[-3.5ex]
 \begin{multline}
 \label{EnCo}
 \hspace*{3.0ex}\lambda_1(\mathcal{L}_U)\leq\lambda_2(\mathcal{L}_U)\leq\,\ldots\,
\leq\lambda_k(\mathcal{L}_U)\leq\,\ldots\,, \\
 \lambda_k(\mathcal{L}_U)\to\infty\ \textup{as}\ k\to\infty.
\end{multline}
\item
 Not more than two of these eigenvalues can be negative:
 \begin{equation}
 \label{NeEi}
 \lambda_k(\mathcal{L}_U)\geq0,\quad 3\leq k<\infty.
 \end{equation}
 \item
 The multiplicity of the eigenvalue \(\lambda_k(\mathcal{L}_U)\) does not exceed two:
 \begin{equation}
 \label{Mul}
 \textup{mult}(\lambda_k(\mathcal{L}_U))\leq2,\quad 1\leq k<\infty.
 \end{equation}
 \item If at least one of the entries \(u_{11}\), \(u_{22}\) of the matrix \(U\) is equal to one:, i.e if \((1-u_{11})((1-u_{22})=1\),
 then all eigenvalues \(\lambda_k(\mathcal{L}_U)\) are of multiplicity one:
 \begin{equation}
 \label{Mulo}
 \textup{mult}(\lambda_k(\mathcal{L}_U))=1,\quad 1\leq k<\infty.
 \end{equation}
 \end{enumerate}

\end{thm}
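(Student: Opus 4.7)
The plan is to handle the four conclusions more or less independently, each by invoking an earlier result from the paper.

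For \textbf{(1)}, I would simply invoke Lemma~\ref{MDSU}: the resolvent of \(\mathcal{L}_U\) is compact, and \(\mathcal{L}_U\) is self-adjoint by Theorem~\ref{DSAE}, so the spectral theorem for self-adjoint operators with compact resolvent produces a sequence of real eigenvalues of finite multiplicity with no finite accumulation point; arranging them in non-decreasing order gives \eqref{EnCo}. For \textbf{(3)}, any eigenfunction of \(\mathcal{L}_U\) at \(\lambda\) satisfies the second-order ODE \(Lx=\lambda x\), whose space of solutions on \((-a,a)\) is two-dimensional over \(\mathbb{C}\); by Lemma~\ref{InArS} all such solutions lie in \(L^{2}(-a,a)\), so the eigenspace is contained in this two-dimensional solution space.

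The step I expect to be the main obstacle is \textbf{(2)}. My plan is to compare \(\mathcal{L}_U\) with the reference operator \(\mathcal{L}_I\), which is strictly positive by Theorem~\ref{DiSp}. Both operators are self-adjoint extensions of the same symmetric operator \(\mathcal{L}_{\textup{min}}\), whose deficiency indices are \((2,2)\) by Lemma~\ref{LDefInd} combined with \eqref{CoDD}. As already exploited in the proof of Lemma~\ref{MDSU}, the resolvent difference \((\mathcal{L}_U-\lambda\mathcal{I})^{-1}-(\mathcal{L}_I-\lambda\mathcal{I})^{-1}\) has rank at most two. The key external ingredient is the classical min-max consequence for two self-adjoint operators whose resolvents differ by a rank-\(k\) operator, which yields \(\lambda_{k+2}(\mathcal{L}_U)\ge\lambda_{k}(\mathcal{L}_I)\) for all \(k\ge1\). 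Since \(\lambda_{k}(\mathcal{L}_I)>0\), this forces \(\lambda_{k}(\mathcal{L}_U)\ge0\) for \(k\ge3\), which is exactly \eqref{NeEi}. The delicate point is stating this comparison cleanly in the unbounded self-adjoint setting; I would prefer to cite it rather than reprove it.

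For \textbf{(4)}, suppose \(u_{11}=1\). Unitarity of the \(2\times2\) matrix \(U\) forces \(u_{12}=u_{21}=0\), so the boundary conditions \eqref{SBoCo} decouple into the two separate endpoint conditions
\[
b_{-a}(x)=0\qquad\text{and}\qquad (1+u_{22})\,b_{a}(x)-i(1-u_{22})\,c_{a}(x)=0.
\]
A short computation from \eqref{GBCo} using the asymptotic representations in Lemma~\ref{ABSNS} gives \(b_{-a}(x_1^{-})=0\) and \(b_{-a}(x_2^{-})=1\); writing an eigenfunction as \(x=c_1x_1^{-}(\,\cdot\,,\lambda)+c_2x_2^{-}(\,\cdot\,,\lambda)\), the first boundary condition becomes \(c_2=0\), forcing every eigenfunction to be a scalar multiple of \(x_1^{-}(\,\cdot\,,\lambda)\). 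The eigenspace is therefore contained in a one-dimensional subspace, yielding multiplicity one. The case \(u_{22}=1\) is symmetric, with the r\^oles of the two endpoints exchanged and \(x_1^{+}\) in place of \(x_1^{-}\).
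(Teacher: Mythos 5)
Your proposal is correct, and parts (1) and (3) coincide with the paper's argument (compact resolvent from Lemma~\ref{MDSU} plus self-adjointness; the order-two ODE bound on multiplicities). The interesting divergence is in parts (2) and (4). For \eqref{NeEi} the paper simply cites Krein's theorem on self-adjoint extensions of a non-negative symmetric operator with deficiency indices \((2,2)\) (\cite[Theorem 18]{Kr}); you instead compare \(\mathcal{L}_U\) with the positive reference operator \(\mathcal{L}_I\) via the rank-two resolvent difference already noted in the proof of Lemma~\ref{MDSU} and invoke finite-rank min--max. Both routes outsource a classical fact; yours has the merit of reusing only ingredients already on the table (Lemma~\ref{MDSU}, Theorem~\ref{DiSp}) and even yields the strict inequality \(\lambda_k(\mathcal{L}_U)>0\) for \(k\ge3\), at the cost of the formulation delicacy you yourself flag. (An even more elementary variant that avoids resolvents entirely: three orthonormal eigenvectors with negative eigenvalues would span a \(3\)-dimensional negative subspace of \(\mathcal{D}_{\mathcal{L}_U}\), which must meet the codimension-two subspace \(\mathcal{D}_{\mathcal{L}_{\textup{min}}}\) nontrivially, contradicting \(\mathcal{L}_{\textup{min}}\ge0\).) For part (4) the paper kills the coefficient \(c_2^-\) by a boundedness argument (a version of Lemma~\ref{BNE} for \(L\)); your direct evaluation \(b_{-a}(x_1^-)=0\), \(b_{-a}(x_2^-)=1\) from Lemma~\ref{ABSNS} is a clean equivalent.

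One point you should make explicit: compactness of the resolvent alone does not let you enumerate the spectrum as a non-decreasing sequence tending to \(+\infty\) as in \eqref{EnCo} --- a priori the eigenvalues could accumulate at \(-\infty\). The paper dispatches this by noting that \(\mathcal{L}_{\textup{min}}\) is non-negative with finite deficiency indices, hence every self-adjoint extension is bounded below. In your scheme, lower semiboundedness is exactly what part (2) delivers, but your statement of (2) is phrased in terms of the ordered eigenvalues \(\lambda_{k+2}(\mathcal{L}_U)\), which presupposes the enumeration of part (1). To avoid this circularity, run the comparison in terms of counting functions: for real \(z_0<0\) outside the (discrete) spectrum of \(\mathcal{L}_U\), the compact self-adjoint operator \((\mathcal{L}_U-z_0)^{-1}\) differs from the positive operator \((\mathcal{L}_I-z_0)^{-1}\) by a rank-\(\le2\) perturbation, hence has at most two negative eigenvalues, i.e. \(\mathcal{L}_U\) has at most two eigenvalues below \(z_0\); letting \(z_0\uparrow0\) gives both \eqref{NeEi} and the lower bound needed for \eqref{EnCo}.
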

\begin{proof} According to Lemma \ref{MDSU}, the spectrum of the self-adjoint operator \(\mathcal{L}_U\)  consists of isolated points which are eigenvalues. The operator
\(\mathcal{L}_U\) is an extension of the symmetric operator \(\mathcal{L}_{{}_\textup{min}}\)
which is non-negative. (The inequality \eqref{PoL} for \(x\in\mathcal{D}_{\mathcal{L}_{{}_\textup{min}}}\).) Since the deficiency indices
of the operator  \(\mathcal{L}_{{}_\textup{min}}\) are finite, \eqref{DeIn}, the spectrum
of the operator \(\mathcal{L}_U\) is bounded from below. Hence the sequence
\(\{\lambda_k(\mathcal{L}_U)\}_{1\leq k<\infty}\) of the eigenvalues of
\(\mathcal{L}_U\) can be enumerated such that the conditions \eqref{EnCo} holds. The
condition \eqref{NeEi} is a consequence of \cite[Theorem 18]{Kr}.  The inequality \eqref{Mul} holds because the equation \(\mathcal{L}_U\,x-\lambda x=0\) is a differential equation of order two. If \(u_{11}=1\) then the boundary condition
\eqref{SBoCo1} is of the form \(b_{-a}(x)=0\). According to a version of Lemma \ref{BNE},
formulated for the operator \(L\), any solution \(x(t,\lambda)\) of the eigenvalue problem \(\mathcal{L}_Ux=\lambda x\) is bounded as \(t\to-a+0\). According to
Lemma \ref{ABSNS}, any solution \(x(t,\lambda)\) of the differential equation \((Lx)(t,\lambda)=\lambda x(t,\lambda)\) must be of the form \eqref{LComM}. Since the function \(x_{1}^{-}(t,\lambda)\) is bounded and the function \(x_{2}^{-}(t,\lambda)\) is unbounded as \(t\to-a+0\), the coefficient \(c_2^{-}\) in \eqref{LComM} must vanish.
\end{proof}
Among all self-adjoint extensions \(\mathcal{L}_U\) of the minimal symmetric non-negative operator \(\mathcal{L}_{{}_\textup{min}}\), we distinguish the extension \(\mathcal{L}_I\)
which corresponds to the choice of the identity matrix \(I\) as the matrix \(U\).
The operator \(\mathcal{L}_I\) plays a special role. We shall see in the next section
that among all extensions \(\mathcal{L}_U\) of the operator \(\mathcal{L}_{{}_\textup{min}}\), only the operator \(\mathcal{L}_I\) commutes with
the truncated Fourier operator \(\mathscr{F}_{[-a,a]}\).

\begin{thm}
\label{DSOIT}{\ }\\[-2.5ex]
\begin{enumerate}
\item
 The spectrum of the operator \(\mathcal{L}_I\) is formed
by the sequence \(\{\lambda_k\}_{1\leq k<\infty}\) of positive eigenvalues of
multiplicity one:
 \begin{equation}
 \label{EnCoI}
 \hspace*{3.0ex}0<\lambda_1<\lambda_2<\,\ldots\,
<\lambda_k<\,\ldots\,, \quad
 \lambda_k\to\infty\ \textup{as}\ k\to\infty.
\end{equation}
\item
\begin{subequations}
The system of the eigenfunctions \(\{\chi_k\}_{1\leq k<\infty}\):

\label{EigfI}
\begin{gather}
 \label{EigfI1}
 (L\chi_k)(t)=\lambda_k\chi_k(t),\quad t\in(-a,a),\\
 \label{EigfI2}
 b_{-a}(\chi_k)=0,\quad b^{a}(\chi_k)=0,
 \end{gather}
 is a complete orthogonal system in \(L^2([-a,a])\).
 \end{subequations}
\end{enumerate}
\end{thm}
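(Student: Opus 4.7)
The plan is to derive both parts almost entirely from results already established, since the operator $\mathcal{L}_I$ has been set up to make this final statement a near-corollary of the preceding spectral theory. First I would record that for $U=I$ we have $u_{11}=u_{22}=1$ and $u_{12}=u_{21}=0$, so the boundary conditions \eqref{SBoCo1}--\eqref{SBoCo2} collapse to $2b_{-a}(x)=0$ and $2b_{a}(x)=0$. Thus $\mathcal{D}_{\mathcal{L}_I}$ is precisely the set of $x\in\mathcal{D}_{\mathcal{M}_{\textup{max}}}$ satisfying $b_{-a}(x)=b^{a}(x)=0$, which is exactly the boundary condition appearing in \eqref{EigfI2}, and the action of $\mathcal{L}_I$ on such $x$ is $Lx$ by \eqref{Pot} and Definition \ref{SAEMOp}.

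For part (1), I would invoke Theorem \ref{DSOU} specialized to $U=I$: it already gives a discrete enumeration $\lambda_1(\mathcal{L}_I)\le\lambda_2(\mathcal{L}_I)\le\cdots\to\infty$. The simplicity of every eigenvalue follows from item (4) of Theorem \ref{DSOU}, which applies because $u_{11}=1$; hence the inequalities in \eqref{EnCoI} are strict. Positivity $\lambda_k>0$ is immediate from Theorem \ref{DiSp}(2), since an eigenvalue $\lambda_k$ satisfies $\lambda_k\|\chi_k\|^2=\langle\mathcal{L}_I\chi_k,\chi_k\rangle>0$. This settles \eqref{EnCoI}.

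For part (2), the eigenfunctions $\chi_k$ lie in $\mathcal{D}_{\mathcal{L}_I}$ by definition and therefore satisfy \eqref{EigfI1}--\eqref{EigfI2} by the preceding paragraph. Orthogonality of $\chi_k$ and $\chi_\ell$ for $k\neq\ell$ is the standard argument from self-adjointness: $(\lambda_k-\lambda_\ell)\langle\chi_k,\chi_\ell\rangle=\langle\mathcal{L}_I\chi_k,\chi_\ell\rangle-\langle\chi_k,\mathcal{L}_I\chi_\ell\rangle=0$, and $\lambda_k\neq\lambda_\ell$ by the strict inequalities in \eqref{EnCoI}. Completeness follows by applying the Hilbert--Schmidt spectral theorem to the self-adjoint operator $\mathcal{L}_I$: by Lemma \ref{MDS} (or by Lemma \ref{MDSU} with $U=I$), the resolvent $(\mathcal{L}_I-\lambda\mathcal{I})^{-1}$ is a compact self-adjoint operator for, say, $\lambda=-1$, and its eigenfunctions, which coincide with those of $\mathcal{L}_I$, form a complete orthonormal system in $L^2([-a,a])$.

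I do not expect a real obstacle here; every ingredient is packaged in the preceding theorems and lemmas. The only point that needs a small verification is the translation of the boundary conditions \eqref{SBoCo1}--\eqref{SBoCo2} into the form \eqref{EigfI2} when $U=I$, and this is a one-line substitution. The proof will essentially be an organized citation of Theorems \ref{DSOU}, \ref{DiSp}, and Lemma \ref{MDS}, together with the standard spectral-theoretic consequences of having a self-adjoint operator with compact resolvent.
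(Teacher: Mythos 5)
Your proposal is correct and follows exactly the route the paper intends: Theorem \ref{DSOIT} is stated without a separate proof precisely because it is meant to be assembled, as you do, from Theorem \ref{DSOU} (discreteness, enumeration, and simplicity via item (4) since $u_{11}=1$), Theorem \ref{DiSp}(2) (positivity of the eigenvalues), the reduction of the boundary conditions \eqref{SBoCo} to $b_{-a}(x)=b_{a}(x)=0$ for $U=I$ (cf.\ \eqref{FBC}, \eqref{DDLI}), and Lemma \ref{MDS} plus the Hilbert--Schmidt theorem for completeness of the eigenfunctions. No gaps; your one-line verification of the boundary-condition translation and the standard orthogonality argument are exactly what is needed.
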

\begin{defn}
The functions \(\chi_k(t)\), which are the eigenfunction of the boundary value problem
\eqref{EigfI}, are said to be \emph{the prolate spheroidal wave functions.}
\end{defn}
\begin{rem}
\label{CoTD}
Traditionally the prolate spheroidal wave functions \(\chi_k\) are defined as those solutions
of the equation \eqref{EigfI1} which are bounded on \((-a,a)\):
\begin{equation}
\label{Alt}
\sup\limits_{t\in(-a,a)}|\chi_{k}(t)|<\infty
\end{equation}
The traditional definitions is equivalent to the definition \(\chi_k\) by means of the
eigenvalue problem \eqref{EigfI1}, \eqref{EigfI2}.
\end{rem}

\section{Commutator of the operators \(\mathscr{F}_E\) and \(\mathcal{L}_U\).}
For \(x\in\mathcal{D}_{\mathcal{L}_{\textup{max}}}\), let us calculate the difference
\(\mathscr{F}_E\mathcal{L}_{\textup{max}}x-\mathcal{L}_{\textup{max}}\mathscr{F}_Ex\).
Since \(\mathcal{L}_{\textup{max}}x\in{}L^2([-a,a])\), the expression
\(\mathscr{F}_E(\mathcal{L}_{\textup{max}}x)\) is defined.
The functions \((\mathscr{F}_Ex)(t)\) and \((\mathscr{F}_E\mathcal{L}_{\textup{max}}x)(t)\) are  smooth  on the closed interval
\([-a,a]\). (In fact these function are analytic in the whole real axis.)
All the more, \(\mathscr{F}_Ex\in\mathcal{D}_{\mathcal{L}_{\textup{max}}}\).
Thus for \(x\in\mathcal{D}_{\mathcal{L}_{\textup{max}}}\),
the difference \(\mathscr{F}_E\mathcal{L}_{\textup{max}}x-\mathcal{L}_{\textup{max}}\mathscr{F}_Ex\)
is well defined.

Assuming that  \(x\in\mathcal{D}_{\mathcal{L}_{\textup{max}}}\) and that
\(-a<\alpha<\beta<a\),  we integrate
by parts twice\,%
\begin{multline}
\label{ffp}%
 \int\limits_{\alpha}^{\beta}
\left(-\frac{d\,\,}{d\xi}\Bigg(\bigg(1-\frac{\xi^2}{a^2}\bigg)\right)
\frac{dx(\xi)}{d\xi}\Bigg)e^{it\xi}d\xi=\\[1.0ex]
=-\bigg(1-\frac{\xi^2}{a^2}\bigg)\frac{dx(\xi)}{d\xi}\,e^{it\xi}%
\bigg|_{\xi=\alpha}^{\xi=\beta}
+it\bigg(1-\frac{\xi^2}{a^2}\bigg)x(\xi)%
e^{it\xi}\bigg|_{\xi=\alpha}^{\xi=\beta}-\\[1.0ex]
-it
\int\limits_{\alpha}^{\beta}x(\xi)\,\frac{d\,\,}{d\xi}
\left(\bigg(1-\frac{\xi^2}{a^2}\bigg)e^{it\xi}\right)\,d\xi\,.
\end{multline}

For \(x\in\mathcal{D}_{\mathcal{L}_{\textup{max}}}\), both limits
\(\displaystyle\lim_{t\to\pm{}a}(1-t^2/a^2)\frac{dx(t)}{dt}\) exist, are finite,
and
\begin{subequations}
\label{EVT}
\begin{align}
\label{EVT1}
\lim_{t\to{}-a}(1-t^2/a^2)\frac{dx(t)}{dt}&=\ \frac{2}{a^2}\,b_{-a}(x)\,,\\
\label{EVT2}
\lim_{t\to+a}(1-t^2/a^2)\frac{dx(t)}{dt}&=-\frac{2}{a^2}\,\,b_{\,a}(x)\,.\,
\end{align}
\end{subequations}
where \(b_{-a}(x), b_{a}(x)\) are defined in \eqref{GBCo} and also appear
in the boundary conditions \eqref{SBoCo}. Since the limits in \eqref{EVT}
are finite, we conclude that \(|x(t)|=O(\ln(a^2-t^2))\) as \(t\to\pm{}a,\,|t|<a\).
All the more, for \(x\in\mathcal{D}_{\mathcal{L}_{\textup{max}}}\)
\begin{equation}
\label{ZDC}
\lim_{t\to-a+0}\bigg(1-\frac{t^2}{a^2}\bigg)x(t)=0\,.
\end{equation}
Passing to the limit in \eqref{ffp} and taking into account \eqref{ZDC} and \eqref{EVT},
we obtain
\begin{multline}
\label{ffpOm}%
 \int\limits_{-a}^{a}
\left(-\frac{d\,\,}{d\xi}\Bigg(\bigg(1-\frac{\xi^2}{a^2}\bigg)
\frac{dx(\xi)}{d\xi}\Bigg)\right)e^{it\xi}d\xi=
\frac{2}{a}\bigg(b_{a}(x)e^{iat}+b_{-a}(x)e^{-iat}\bigg)-
\\[1.0ex]
-it
\int\limits_{-a}^{a}x(\xi)\,\frac{d\,\,}{d\xi}
\left(\bigg(1-\frac{\xi^2}{a^2}\bigg)e^{it\xi}\right)\,d\xi\,.
\end{multline}
Transforming the last summand of the right hand side of \eqref{ffpOm},
we obtain
\begin{multline*}%
-it\int\limits_{-a}^{a}x(\xi)\,\frac{d\,\,}{d\xi}
\left(\bigg(1-\frac{\xi^2}{a^2}\bigg)e^{it\xi}\right)\,d\xi=\\[1.0ex]
=t^2\int\limits_{-a}^{a}x(\xi)e^{it\xi}\,d\xi+
\frac{it}{a^2}\int\limits_{-a}^{a}x(\xi)%
\frac{d\,\,}{d\xi}(\xi^2e^{it\xi})\,d\xi=
\end{multline*}%
\vspace{-2.0ex}
\hspace*{8.0ex}\bigg(\,\,since \(\dfrac{d\,\,}{d\xi}(\xi^2e^{it\xi})=\dfrac{d\,\,}{d\xi}%
\Big(-\dfrac{d^2\,\,}{dt^2}e^{it\xi}\Big)
=-\dfrac{d^2\,\,}{dt^2}\big(ite^{it\xi}\big)\)\,\,\bigg)\hspace*{2.0ex}
\vspace{2.0ex}
\begin{multline*}%
=t^2\int\limits_{-a}^{a}x(\xi)e^{it\xi}\,d\xi+\frac{t}{a^2}\,%
\frac{d^2\,\,}{dt^2}\bigg(t\int\limits_{-a}^{a}x(\xi)e^{it\xi}\,d\xi\bigg)=\\[1.0ex]
=t^2\int\limits_{-a}^{a}x(\xi)e^{it\xi}\,d\xi+\frac{d\,\,}{dt}%
\bigg(\frac{t^2}{a^2}\frac{d\,\,}{dt}\int\limits_{-a}^{a}x(\xi)e^{it\xi}\,d\xi\bigg)=
\end{multline*}
\begin{equation}
\label{Cont}
=t^2\int\limits_{-a}^{a}x(\xi)e^{it\xi}\,d\xi-
\frac{d\,\,}{dt}\Bigg(\bigg(1-\frac{t^2}{a^2}\bigg)\frac{d\,\,}{dt}%
\int\limits_{-a}^{a}x(\xi)e^{it\xi}\,d\xi\Bigg)-
\int\limits_{-a}^{a}\xi^2x(\xi)\,e^{it\xi}\,d\xi\,.
\end{equation}%
Unifying \eqref{ffpOm} and \eqref{Cont}, we obtain the equality
\begin{multline}%
\int\limits_{-a}^{a}
\left(\bigg(-\frac{d\,\,}{d\xi}\bigg(1-\frac{\xi^2}{a^2}\bigg)
\frac{d\,\,}{d\xi}+\xi^2\bigg)\,x(\xi)\right)e^{it\xi}\,d\xi=\\[1.0ex]
=\frac{2}{a}\bigg(b_{a}(x)e^{iat}+b_{-a}(x)e^{-iat}\bigg)
+\Bigg(-\frac{d\,\,}{dt}\bigg(1-\frac{t^2}{a^2}\bigg)\frac{d\,\,}{dt}+t^2\Bigg)%
\int\limits_{-a}^{a}x(\xi)e^{it\xi}\,d\xi\,.
\end{multline}%
We summarize the above calculation as
\begin{lem}
Let \(\mathscr{F}_E\) be the Fourier operator truncated on the
finite symmetric interval \(E=[-a,a]\). Let \(\mathcal{L}_{\textup{max}}\) be
the maximal differential operator with domain of definition %
\(\mathcal{D}_{\mathcal{L}_{\textup{max}}}\)
generated by the formal differential operator
\(\displaystyle{}L=-\frac{d\,\,}{dt}\bigg(1-\frac{t^2}{a^2}\bigg)\frac{d\,\,}{dt}+t^2\).
\textup{(}See \textup{Definition \ref{MaxDO}.)}

If  \(x\in\mathcal{D}_{\mathcal{L}_{\textup{max}}}\),  then  \(\mathscr{F}_Ex\in\mathcal{D}_{\mathcal{L}_{\textup{max}}}\),
and the equality holds
\begin{equation}
\label{ComRel}
(\mathscr{F}_E\mathcal{L}_{\textup{max}}x)(t)-
(\mathcal{L}_{\textup{max}}\mathscr{F}_Ex)(t)=
\frac{2}{a}\bigg(b_{a}(x)e^{iat}+b_{-a}(x)e^{-iat}\bigg)\,.
\end{equation}
\end{lem}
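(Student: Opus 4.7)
The plan is to establish the commutation identity by direct calculation, essentially carrying out integration by parts twice in the integral $\int_{-a}^{a}(Mx)(\xi)e^{it\xi}\,d\xi$ and then transferring the resulting $\xi$-derivatives into $t$-derivatives via the identity $\frac{d}{d\xi}(\xi^2 e^{it\xi}) = -\frac{d^2}{dt^2}(it\,e^{it\xi})/(it)$ (i.e.\ $\xi^k e^{it\xi} = (-i\partial_t)^k e^{it\xi}$). Since $L = M + Q$ with $Q$ the bounded multiplication by $t^2$ and $\mathcal{D}_{\mathcal{L}_{\textup{max}}} = \mathcal{D}_{\mathcal{M}_{\textup{max}}}$, it suffices to track the $M$-part and note that the $Q$-part commutes with $\mathscr{F}_E$ only up to a term that combines with the bulk integrand to produce $\mathcal{L}_{\textup{max}}\mathscr{F}_E x$.

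First I would work on a compact subinterval $[\alpha,\beta]\Subset(-a,a)$ and integrate by parts once to pull out the boundary term $-(1-\xi^2/a^2)\frac{dx}{d\xi}e^{it\xi}\big|_{\alpha}^{\beta}$, then integrate by parts a second time to pull out a boundary term of the form $it(1-\xi^2/a^2)x(\xi)e^{it\xi}\big|_{\alpha}^{\beta}$ and to reduce the remaining integrand to $-it\int x(\xi)\frac{d}{d\xi}\big((1-\xi^2/a^2)e^{it\xi}\big)d\xi$. I would then let $\alpha\to -a+0$, $\beta\to a-0$. By \eqref{EVT}, the first boundary terms converge to $(2/a)(b_a(x)e^{iat} + b_{-a}(x)e^{-iat})$, while the second boundary term tends to zero by \eqref{ZDC}. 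Algebraic manipulation of the remaining integral (distributing the derivative in $\xi$, and converting the factors $\xi^2 e^{it\xi}$ into $-\partial_t^2 e^{it\xi}$) then regroups the terms into $t^2(\mathscr{F}_E x)(t) - \frac{d}{dt}\!\left((1-t^2/a^2)\frac{d}{dt}(\mathscr{F}_E x)(t)\right) - \mathscr{F}_E(Q x)(t)$, after which adding the $Q$-contribution $\mathscr{F}_E(Qx)(t) = \mathscr{F}_E(t^2 x)$ to both sides yields \eqref{ComRel}.

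For the preliminary claim that $\mathscr{F}_E x \in \mathcal{D}_{\mathcal{L}_{\textup{max}}}$, I would observe that $(\mathscr{F}_E x)(t)$ is entire in $t$ as a parameter integral over a compact interval, hence smooth on $[-a,a]$, so it automatically belongs to $\mathcal{A}$; moreover the resulting function $(L\mathscr{F}_E x)(t)$ is continuous on the closed interval, thus in $L^2([-a,a])$. This justifies that both sides of \eqref{ComRel} make sense as elements of $L^2([-a,a])$.

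The main obstacle is the justification that the second boundary term vanishes in the limit, i.e.\ that $(1-\xi^2/a^2)x(\xi)\to 0$ as $\xi\to\pm a$ for every $x\in\mathcal{D}_{\mathcal{L}_{\textup{max}}}$. This does not follow from $x\in L^2$ alone; it requires the asymptotic structure of solutions near the singular endpoints established in Lemma \ref{ABSNS}, which forces $|x(\xi)|=O(\log(a^2-\xi^2))$ near the endpoints whenever $(1-\xi^2/a^2)x'(\xi)$ admits a finite limit (as guaranteed by \eqref{EVT}). Once this logarithmic bound is in hand, the product with $(1-\xi^2/a^2)$ clearly vanishes and the rest of the argument is routine computation.
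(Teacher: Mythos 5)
Your proposal is correct and follows essentially the same route as the paper: two integrations by parts on a compact subinterval $[\alpha,\beta]$, passage to the limit with the first boundary term identified via \eqref{EVT} and the second killed by \eqref{ZDC}, and conversion of $\xi^{2}e^{it\xi}$ into $-\partial_t^{2}e^{it\xi}$ to regroup the remaining integral into $\mathcal{L}_{\textup{max}}\mathscr{F}_E x$ minus the $Q$-contribution. Your justification of \eqref{ZDC} — the finite limits in \eqref{EVT} force $|x(t)|=O(\ln(a^{2}-t^{2}))$ near the endpoints, so the product with $(1-t^{2}/a^{2})$ vanishes — is precisely the argument the paper uses.
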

Every self-adjoint differential operator generated by the formal differential
operator \(L\) is a restriction of the maximal differential operator \(\mathcal{L}_{\textup{max}}\)
on the appropriate domain of definition. According to Theorem~\ref{DSAE},
the set of such self-adjoint operators coincides with the set of operators \(\mathcal{L}_U\),
where \(U\) is an arbitrary \(2\times2\) unitary matrix. The domain of definition
\(\mathcal{D}_{\mathcal{L}_U}\) of the operator \(\mathcal{L}_U\) is distinguished from the
domain \(\mathcal{D}_{\mathcal{L}_{\textup{max}}}\) by the boundary conditions \eqref{SBoCo}
constructed from \(U\). The next theorem answers the question which operators \(\mathcal{L}_U\)
commute with the truncated Fourier operator \(\mathscr{F}_E,\,E=[-a,a]\).
\begin{thm}{\ }\\[-1.0ex]
\label{WECF}
\begin{enumerate}
\item
If \, \(U=I\), where \(I\) is \(2\times2\)  identity matrix, then
the differential operator\,%
\footnotemark\,%
\(\mathcal{L}_I\) commutes with the truncated  %
 Fourier operator~%
\(\mathscr{F}_{[-a,a]}\):
\begin{equation}
\label{DCoRe}
\mathscr{F}_{[-a,a]}\mathcal{L}_I\,x=\mathcal{L}_I\mathscr{F}_{[-a,a]}\,x\quad \forall\, x\in%
\mathcal{D}_{\mathcal{L}_I}\,.
\end{equation}
\item
If \,\,\(U\not=I\), then the operator \(\mathcal{L}_U\) do not commute with the operator
\(\mathscr{F}_{[-a,a]}\):
\begin{enumerate}
\item
There exist vectors \(x\in\mathcal{D}_{\mathcal{L}_U}\) such that
\(\mathscr{F}_{[-a,a]}\in\mathcal{D}_{\mathcal{L}_U}\), so both operators
\(\mathscr{F}_{[-a,a]}\mathcal{L}_{U}\) and \(\mathcal{L}_{U}\mathscr{F}_{[-a,a]}\)
are applicable to \(x\), but
\begin{equation}
\mathscr{F}_{[-a,a]}\mathcal{L}_{U}x\not=
\mathcal{L}_{U}\mathscr{F}_{[-a,a]}x\,;
\end{equation}
\item
 There exist vectors \(x\in\mathcal{D}_{\mathcal{L}_U}\) such that
\(\mathscr{F}_{[-a,a]}x\not\in\mathcal{D}_{\mathcal{L}_U}\), so the operator
\(\mathcal{L}_{U}\mathscr{F}_{[-a,a]}\) even can not be applied to such \(x\).
\end{enumerate}
 \footnotetext{\,\(\mathcal{L}_I=\mathcal{L}_U \textup{ for
}U=I\).} \addtocounter{footnote}{1}
\end{enumerate}
\end{thm}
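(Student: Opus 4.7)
The approach is to base everything on the commutator identity
\begin{equation*}
(\mathscr{F}_{E}\mathcal{L}_{\textup{max}}x)(t)-(\mathcal{L}_{\textup{max}}\mathscr{F}_{E}x)(t)=\tfrac{2}{a}\bigl(b_{a}(x)e^{iat}+b_{-a}(x)e^{-iat}\bigr)
\end{equation*}
from the preceding lemma. Since $e^{iat}$ and $e^{-iat}$ are linearly independent in $L^{2}([-a,a])$, the right-hand side vanishes exactly when $b_{-a}(x)=b_{a}(x)=0$. I will also repeatedly use that for any $y\in\mathcal{D}_{\mathcal{L}_{\textup{max}}}$ the function $\mathscr{F}_{E}y$ is the restriction to $[-a,a]$ of an entire function; hence $b_{\pm a}(\mathscr{F}_{E}y)=0$ (its derivative is bounded near the endpoints) and $c_{\pm a}(\mathscr{F}_{E}y)=-(\mathscr{F}_{E}y)(\pm a)$ (since $s\ln s\to 0$). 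Part (1) follows immediately: specializing \eqref{SBoCo} to $U=I$ gives $b_{-a}(x)=b_{a}(x)=0$, matching \eqref{FBC}, so the commutator vanishes for $x\in\mathcal{D}_{\mathcal{L}_I}$; and $\mathscr{F}_{E}x\in\mathcal{D}_{\mathcal{L}_I}$ automatically because $b_{\pm a}(\mathscr{F}_{E}x)=0$.

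For part (2), assume $U\neq I$. First, I claim there exists $x_{0}\in\mathcal{D}_{\mathcal{L}_U}$ with $(b_{-a}(x_{0}),b_{a}(x_{0}))\neq(0,0)$: otherwise $\mathcal{D}_{\mathcal{L}_U}\subseteq\mathcal{D}_{\mathcal{L}_I}$, and since both are codimension $2$ in $\mathcal{D}_{\mathcal{L}_{\textup{max}}}$ they would coincide, forcing $U=I$ by Theorem~\ref{DSAE}(3). Second, substituting the values above for $\mathscr{F}_{E}y$ into the boundary conditions \eqref{SBoCo} shows that $\mathscr{F}_{E}y\in\mathcal{D}_{\mathcal{L}_U}$ is equivalent to the $2\times 2$ linear system
\begin{equation*}
(I-U)\begin{bmatrix}(\mathscr{F}_{E}y)(-a)\\(\mathscr{F}_{E}y)(a)\end{bmatrix}=0,
\end{equation*}
which is a nontrivial constraint because $I-U\neq 0$.

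For item (2b), any $\phi\in C_{c}^{\infty}((-a,a))$ lies in $\mathcal{D}_{\mathcal{L}_{\textup{min}}}\subseteq\mathcal{D}_{\mathcal{L}_U}$, and the evaluation map $\phi\mapsto\bigl((\mathscr{F}_{E}\phi)(-a),(\mathscr{F}_{E}\phi)(a)\bigr)$ is surjective onto $\mathbb{C}^{2}$ (because $e^{\pm ia\xi}$ are linearly independent functionals on $C_{c}^{\infty}((-a,a))$), so I may pick $\phi$ violating the kernel condition of $I-U$ and conclude $\mathscr{F}_{E}\phi\notin\mathcal{D}_{\mathcal{L}_U}$. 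For item (2a), I start from the $x_{0}$ produced above and set $x=x_{0}+\phi$ with $\phi\in C_{c}^{\infty}((-a,a))$ chosen, using the same surjectivity, to enforce $(I-U)\bigl[(\mathscr{F}_{E}x)(-a),(\mathscr{F}_{E}x)(a)\bigr]^{T}=0$. Since $\phi$ vanishes near $\pm a$, the boundary values of $x$ coincide with those of $x_{0}$, hence $x\in\mathcal{D}_{\mathcal{L}_U}$, $(b_{-a}(x),b_{a}(x))\neq(0,0)$, and $\mathscr{F}_{E}x\in\mathcal{D}_{\mathcal{L}_U}$; the commutator identity then produces the nonzero difference $\tfrac{2}{a}(b_{a}(x)e^{iat}+b_{-a}(x)e^{-iat})$.

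The main obstacle will be arranging the three simultaneous conditions needed for (2a) — namely $x\in\mathcal{D}_{\mathcal{L}_U}$, $\mathscr{F}_{E}x\in\mathcal{D}_{\mathcal{L}_U}$, and $(b_{-a}(x),b_{a}(x))\neq(0,0)$. The reason this is manageable is a clean decoupling: a $C_{c}^{\infty}((-a,a))$ perturbation leaves the functionals $b_{\pm a}$ untouched while acting surjectively on the Fourier boundary values, so the three constraints can be satisfied independently.
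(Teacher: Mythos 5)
Your proof is correct and follows the same overall strategy as the paper: part (1) rests on the commutator identity \eqref{ComRel} together with the observations that the \(U=I\) boundary conditions reduce to \(b_{-a}(x)=b_{a}(x)=0\) and that \(\mathscr{F}_{E}x\) is smooth on \([-a,a]\), hence automatically in \(\mathcal{D}_{\mathcal{L}_I}\); part (2) rests on reducing the membership \(\mathscr{F}_{E}y\in\mathcal{D}_{\mathcal{L}_U}\) for smooth \(y\) to the endpoint-value system \eqref{SpBC} (using \(b_{\pm a}(\mathscr{F}_{E}y)=0\), \(c_{\pm a}(\mathscr{F}_{E}y)=-(\mathscr{F}_{E}y)(\pm a)\)) and on the surjectivity of \(\phi\mapsto\big((\mathscr{F}_{E}\phi)(-a),(\mathscr{F}_{E}\phi)(a)\big)\) over compactly supported \(\phi\), which is exactly the paper's closing Lemma. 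The one place where you genuinely diverge is in producing an element of \(\mathcal{D}_{\mathcal{L}_U}\) with \((b_{-a},b_{a})\neq(0,0)\): the paper splits into cases according to whether \(u_{11}\neq1\) or \(u_{22}\neq1\) and writes down the explicit witness \(\psi_{-}+\gamma\varphi_{-}+x_0\) with \(\gamma=(1+u_{11})/\big(i(1-u_{11})\big)\), whereas you argue abstractly that if no such element existed then \(\mathcal{D}_{\mathcal{L}_U}\subseteq\mathcal{D}_{\mathcal{L}_I}\), and two self-adjoint restrictions of \(\mathcal{L}_{\textup{max}}\) with one domain contained in the other must coincide, forcing \(U=I\) by the uniqueness part of Theorem \ref{DSAE}. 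Your version avoids the case analysis and the explicit \(\gamma\) at the cost of being non-constructive; the paper's version exhibits a concrete function. Your repackaging of \eqref{SpBC} as \((I-U)\,[y(-a),y(a)]^{T}=0\) is also a tidy way to make visible that the obstruction is nontrivial precisely when \(U\neq I\), and your use of a pure \(C_{c}^{\infty}\) function for item (2b) slightly simplifies the paper's choice there.
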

\begin{proof}{\ }\\
\textbf{1}.
For \(U=I\), the boundary conditions \eqref{SBoCo} take the form
\begin{equation}
\label{DBoCo}
b_{-a}(x)=0,\quad b_{a}(x)=0\,.
\end{equation}
Thus, the domain of definition \(\mathcal{D}_{\mathcal{L}_I}\) of the operator
\(\mathcal{L}_I\) is:
\begin{equation}
\label{DDLI}
\mathcal{D}_{\mathcal{L}_I}=\big\lbrace{}x:\,x\in\mathcal{D}_{\mathcal{L}_{\textup{max}}},\
b_{-a}(x)=0,\,b_{a}(x)=0\big\rbrace\,.
\end{equation}
Every function \(x(t)\) on \((-a,a)\) which derivative is
bounded:
\(\sup\limits_{t\in(-a,a)}|x^\prime(t)|<\infty\), belongs to
\(\mathcal{D}_{\mathcal{L}_{\textup{max}}}\). Moreover, according
to \eqref{GBCo}, every such a function  satisfies the boundary
condition \eqref{DDLI}, i.e. \(b_{-a}(x)=0\),\,\(b_{a}(x)=0\).
Hence \emph{every smooth function on \((-a,a)\) which derivative
is bounded on \((-a,a)\), belongs to domain of definition
\(\mathcal{D}_{\mathcal{L}_I}\) of the operator
\(\mathcal{L}_I\)}. In particular, if \(x\in{}L^2((-a,a)\) and
\(y=\mathscr{F}_{[-a,a]}x\), then
\(y\in\mathcal{D}_{\mathcal{L}_I}\).
Thus for \(x\in\mathcal{D}_{\mathcal{L}_I}\) both summands in the expression %
\(\mathscr{F}_{[-a,a]}\mathcal{L}_Ix-\mathcal{L}_I\mathscr{F}_{[-a,a]}x\)
are well defined. Since the operator \(\mathcal{L}_I\) is a restriction of the
operator \(\mathcal{L}_{\textup{max}}\), then
\begin{equation*}%
\mathscr{F}_{[-a,a]}\mathcal{L}_Ix-\mathcal{L}_I\mathscr{F}_{[-a,a]}x=
\mathscr{F}_{[-a,a]}\mathcal{L}_{\textup{max}}x-%
\mathcal{L}_{\textup{max}}\mathscr{F}_{[-a,a]}x\,\,\textup{ for }
x\in\mathcal{D}_{\mathcal{L}_I}\,.
\end{equation*}
In view of \eqref{ComRel} and \eqref{DBoCo}, the equality \eqref{DCoRe} holds.\\[1.0ex]
\textbf{2}. Let \(U\not=I\). Then at least of one value \(u_{11}-1\) or \(u_{22}-1\)
differs from zero. For definiteness, let \ \(u_{11}-1\not=0\). Set
\begin{equation}%
\label{SpX}%
\gamma=\frac{1+u_{11}}{i(1-u_{11})}\,,\quad x(t)=\psi_{-}(t)+\gamma\varphi_{-}(t)+x_0(t),
\end{equation}
where \(x_0(t)\) is a smooth function which support is a compact subset of the
\emph{open} interval \((-a,a)\):
\begin{equation}%
\label{SuppX}%
\textup{supp}\,x_0\Subset(-a,a)\,.
\end{equation}
The function \(x_0\) will be chosen later. According to \eqref{Rewr}, \eqref{SuppX}
and the choice of \(\gamma\), \emph{for any choice of} \(x_0(t)\),
the function \(x(t)\) from \eqref{SpX} satisfy the
boundary conditions \eqref{SBoCo}.
Thus,
\begin{equation}
x(t)\in\mathcal{D}_{\mathcal{L}_U}\,.
\end{equation}
for any choice of \(x_0\). Moreover
\begin{equation}
\label{but}
b_{-a}(x)=1,\quad  b_{a}(x)=0\,.
\end{equation}
For the function \(y(t)=(\mathscr{F}_{(-a,a)}x)(t)\),
the boundary conditions \eqref{SBoCo} either hold, or does not hold.
This depends on the choice of the function \(x_0\). If \eqref{SBoCo}
hold for this \(y\), then \(\mathscr{F}_{(-a,a)}x\in\mathcal{D}_{\mathcal{L}_{U}}\)
and the equality
\eqref{ComRel} can be interpreted as the equality
\begin{equation}
\label{Exam1}
(\mathscr{F}_{(-a,a)}\mathcal{L}_{U}x)(t)-
(\mathcal{L}_{U}\mathscr{F}_{(-a,a)}x)(t)=
\frac{2}{a}\bigg(b_{a}(x)e^{iat}+b_{-a}(x)e^{-iat}\bigg)\,.
\end{equation}
In view of \eqref{but}, \((\mathscr{F}_{(-a,a)}\mathcal{L}_{U}x)(t)-
(\mathcal{L}_{U}\mathscr{F}_{(-a,a)}x)(t)\not=0\).

Let us show that both of the possibilities %
\(\mathscr{F}_{(-a,a)}x\in\mathcal{D}_{\mathcal{L}_{U}}\) and
\(\mathscr{F}_{(-a,a)}x\not\in\mathcal{D}_{\mathcal{L}_{U}}\)
are realizable. Since the function \(y(t)=(\mathscr{F}_{(-a,a)}x)(t)\)
is smooth on \([-a,a]\),
\begin{equation*}%
b_{-a}(y)=0,\,b_{a}(y)=0,\,\,c_{-a}(y)=-y(-a),\,c_{-a}(y)=-y(a)\,.
\end{equation*}%
Thus the boundary conditions \eqref{SBoCo}
take the form
\begin{subequations}
\label{SpBC}
\begin{gather}
\label{SpBC1}
(1-u_{11})y(-a)-u_{12}y(a)=0\,,\\
\label{SpBC2}
u_{21}y(-a)-(1-u_{22})y(a)=0\,.
\end{gather}
\end{subequations}
If, using the freedom of choice of the function \(x_0(t)\) in \eqref{SpX}, we can arbitrary
prescribe the values \(y(-a)\) and \(y(a)\), then we can either satisfy the
boundary conditions \eqref{SpBC} (prescribing \(y(-a)=0,\,y(a)=0\)),
or violate them (if \(u_{11}\not=1\),
we prescribe \(y(-a)=1,\,y(a)=0\), if  \(u_{22}\not=1\),
we prescribe \(y(-a)~=~0,\,y(a)~=~1\).) The reference to Lemma below
finishes the proof.
\end{proof}
\begin{lem}
Given the complex numbers \(y_1\) and \(y_2\), there exists a smooth function
\(x_0(t)\) on \([-a,a]\) which possesses the properties:
\begin{enumerate}
\item \(\hspace*{20.0ex}\textup{supp}\,x_0\Subset(-a,a)\,.\)
\item \(\hspace*{10.0ex}y_0(-a)=y_1,\ y_0(a)=y_2\), where \(y_0=\mathscr{F}_{[-a,a]}(x_0)\).
\end{enumerate}
\end{lem}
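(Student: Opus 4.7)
The plan is to reduce the problem to a $2\times 2$ linear algebra computation by building $x_0$ as a linear combination of two cleverly chosen fixed test functions. Fix smooth functions $\varphi_1, \varphi_2 \in C_c^\infty((-a,a))$ (to be specified below), and look for $x_0$ of the form $x_0 = c_1 \varphi_1 + c_2 \varphi_2$ with $c_1, c_2 \in \mathbb{C}$. Since each $\varphi_j$ has compact support strictly inside $(-a,a)$, so does $x_0$, so condition (1) is automatic. For $y_0 = \mathscr{F}_{[-a,a]}(x_0)$ the requirement $y_0(-a) = y_1$, $y_0(a) = y_2$ becomes the linear system
\begin{equation*}
\begin{bmatrix} \widehat{\varphi}_1(-a) & \widehat{\varphi}_2(-a) \\ \widehat{\varphi}_1(a) & \widehat{\varphi}_2(a) \end{bmatrix}
\begin{bmatrix} c_1 \\ c_2 \end{bmatrix}
= \begin{bmatrix} y_1 \\ y_2 \end{bmatrix},
\end{equation*}
where $\widehat{\varphi}_j(t) = \frac{1}{\sqrt{2\pi}}\int_{-a}^{a} e^{it\xi}\varphi_j(\xi)\,d\xi$. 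So the entire task reduces to producing one pair $\varphi_1, \varphi_2$ for which the above $2\times 2$ matrix is invertible.

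To arrange this, I would exploit parity. Choose $\varphi_1$ smooth, even, and nonnegative, with support in a small interval $(-\delta,\delta) \Subset (-a,a)$. Choose $\varphi_2(\xi) = \xi\,\rho(\xi)$ where $\rho$ is smooth, even, nonnegative, and also supported in $(-\delta,\delta)$; thus $\varphi_2$ is odd. Then
\begin{equation*}
\widehat{\varphi}_1(\pm a) = \tfrac{1}{\sqrt{2\pi}}\int_{-\delta}^{\delta} \cos(a\xi)\,\varphi_1(\xi)\,d\xi =: A,
\qquad
\widehat{\varphi}_2(\pm a) = \pm \tfrac{i}{\sqrt{2\pi}}\int_{-\delta}^{\delta} \sin(a\xi)\,\varphi_2(\xi)\,d\xi =: \mp iB
\end{equation*}
(with the sign conventions being $\widehat{\varphi}_2(-a) = -iB$, $\widehat{\varphi}_2(a) = iB$ after pulling out the odd part). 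The determinant of the matrix is therefore $2iAB$, and the matrix is invertible iff $A \neq 0$ and $B \neq 0$.

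Now choose $\delta$ small enough that $a\delta < \pi/2$; then $\cos(a\xi) > 0$ on $[-\delta,\delta]$, so $A > 0$ since $\varphi_1 \geq 0$ is nontrivial. Similarly $\sin(a\xi)\cdot\xi > 0$ for $\xi \neq 0$ on the support, and $\rho \geq 0$ is nontrivial, so the integrand defining $B$ is nonnegative and not identically zero, giving $B > 0$. Hence the matrix is invertible, and the system has a unique solution $(c_1,c_2)$ for any prescribed $(y_1,y_2)$. The resulting $x_0$ is smooth with compact support in $(-a,a)$, satisfying both requirements.

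There is no real obstacle — the only subtle point is to make sure both integrals $A$ and $B$ are nonzero, and the parity-plus-concentration argument handles that cleanly. One could alternatively just cite density of $\{\mathscr{F}_{[-a,a]}x_0 : x_0 \in C_c^\infty((-a,a))\}$ in a suitable space, but the explicit two-parameter construction above is more direct and self-contained.
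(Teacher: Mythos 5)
Your proof is correct. Note that the paper states this lemma without giving any proof at all (it is the last displayed lemma, invoked at the end of the proof of the commutation theorem), so there is no argument of the author's to compare against; your construction fills a gap the paper leaves open. The reduction to the invertibility of the $2\times2$ matrix $\bigl[\begin{smallmatrix}\widehat{\varphi}_1(-a)&\widehat{\varphi}_2(-a)\\ \widehat{\varphi}_1(a)&\widehat{\varphi}_2(a)\end{smallmatrix}\bigr]$ is sound, and the parity-plus-concentration choice (an even bump and an odd bump supported in $(-\delta,\delta)$ with $\delta<\min(a,\pi/(2a))$) does force $A>0$ and $B>0$, hence determinant $2iAB\neq0$. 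The only blemish is the transient sign slip in the display defining $B$ (the ``$=:\mp iB$''), which your parenthetical immediately corrects and which in any case does not affect the nonvanishing of the determinant.
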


\noindent

%%%%%%%%%%%%%%%%%%%%%%%%%%%%%%%%%%%%%%%%%%%%%%%%%%%%%}%%%%%%%%%%%%%%%%%%%%%%%%%%%%%%%

% ------------------------------------------------------------------------
\end{document}